\documentclass{article}
\title{\vspace{-50pt}
	\Huge \text{Representation Varieties of Stacks}\\ 
\text{and Trace Maps}}
\author{Jacob Erlikhman}
\date{}
\usepackage[T1]{fontenc}

\usepackage{amsthm,amsmath,amssymb,enumitem,comment,tikz-cd,hyperref,tikz,braket,latexsym,mathtools}
\usepackage[backend=biber,style=alphabetic]{biblatex}
\DeclareSymbolFont{yhlargesymbols}{OMX}{yhex}{m}{n}
\DeclareMathAccent{\ywidetilde}{\mathord}{yhlargesymbols}{"65}
\usepackage[scaled=.92]{helvet}
\usetikzlibrary{calc,patterns,angles}
\hypersetup{hidelinks}
\usepackage{xifthen}
\usepackage{fontawesome}
\usepackage[scr=boondoxo,bb=boondox]{mathalpha}
\usepackage{import}
\usepackage{pdfpages}
\usepackage{transparent}
\usepackage{bm}

\numberwithin{equation}{section}

\makeatletter
\newcommand\RedeclareMathOperator{%
  \@ifstar{\def\rmo@s{m}\rmo@redeclare}{\def\rmo@s{o}\rmo@redeclare}%
}
\newcommand\rmo@redeclare[2]{%
  \begingroup \escapechar\m@ne\xdef\@gtempa{{\string#1}}\endgroup
  \expandafter\@ifundefined\@gtempa
     {\@latex@error{\noexpand#1undefined}\@ehc}%
     \relax
  \expandafter\rmo@declmathop\rmo@s{#1}{#2}}
\newcommand\rmo@declmathop[3]{%
  \DeclareRobustCommand{#2}{\qopname\newmcodes@#1{#3}}%
}
\@onlypreamble\RedeclareMathOperator
\makeatother

\usepackage[top=2.6cm, bottom=2.6cm, left=1.8cm, right=1.8cm]{geometry}
\newtheoremstyle{quest}{\topsep}{\topsep}{}{}{\bfseries}{}{ }{\thmname{#1}\thmnote{ #3}.}
\theoremstyle{quest}

\theoremstyle{definition}

\newtheorem{thm}{Theorem}[section]
\newtheorem{lem}[thm]{Lemma}

\newtheorem{prop}[thm]{Proposition}
\newtheorem{ex}[thm]{Example}

\newtheorem{cor}[thm]{Corollary}

\theoremstyle{remark}
\newtheorem{rmk}[thm]{Remark}

\newcommand{\opnorm}[1]{{\left\vert\kern-0.25ex\left\vert\kern-0.25ex\left\vert #1 
		\right\vert\kern-0.25ex\right\vert\kern-0.25ex\right\vert}}

\newcommand{\R}{\mathbb{R}}

\newcommand{\msf}{\mathsf}

\newcommand{\Mod}{-\msf{mod}}

\DeclareMathOperator{\GL}{GL}

\DeclareMathOperator{\Hom}{Hom}
\DeclareMathOperator{\Spec}{Spec}

\DeclareMathOperator{\tr}{tr}

\DeclareMathOperator{\SL}{SL}

\DeclareMathOperator{\id}{id}

\DeclareMathOperator{\End}{End}

\DeclareMathOperator{\colim}{colim}

\DeclareMathOperator{\Map}{Map}

\DeclareMathOperator{\rk}{rk}
\RedeclareMathOperator{\O}{O}

\DeclareMathOperator{\pt}{pt}

\DeclareMathOperator{\Sym}{Sym}

\DeclareMathOperator{\Rep}{Rep}

\RedeclareMathOperator*{\colim}{colim}

\DeclareMathOperator{\Coh}{Coh}

\DeclareMathOperator{\Quot}{Quot}

\RedeclareMathOperator{\Mod}{\text{-}\mathsf{mod}}
\DeclareMathOperator{\coker}{coker}

\RedeclareMathOperator{\Spec}{Spec}
\RedeclareMathOperator{\Map}{Map}

\DeclareMathOperator{\Perf}{Perf}
\DeclareMathOperator{\APerf}{APerf}

\RedeclareMathOperator{\Bar}{Bar}
\RedeclareMathOperator{\mod}{-mod}

\DeclareMathOperator{\Vect}{Vect}

\DeclareMathOperator{\triv}{triv}

\newcommand{\git}{\mathbin{
  \mathchoice{/\mkern-6mu/}
    {/\mkern-6mu/}
    {/\mkern-5mu/}
    {/\mkern-5mu/}}}
\begin{document}
\maketitle
\begin{abstract}
	We define a derived stack $\mathscr{Rep}_n(X)$ which generalizes the assignment $A\leadsto \Rep_n(A)$ to an algebra of its derived $\GL_n$-representation variety of \cite{BKR} from algebras $A$ to perfect stacks $X$ over characteristic 0 fields. In the case of a quasi-projective classical scheme $X$, we show that $\mathscr{Rep}_n(X)$ admits a subfunctor $\Quot^{n,\text{fr}}_{\mathscr{O}_X^n}(X)\subset \mathscr{Rep}_n(X)$, which is in fact represented by a derived scheme almost of finite type. We further construct a Fourier-Mukai integral transform between the derived categories of quasi-coherent sheaves on $X$ and $\Quot^{n,\text{fr}}_{\mathscr{O}_X^n}(X)$ which induces a trace map at the level of Hochschild homology generalizing the trace morphism constructed in \cite{BKR} (for finitely presented commutative algebras). We show that the subfunctor $\Quot^{n,\text{fr}}_{\mathscr{O}_X^n}(X)$ is a derived enhancement of the framed locus of the Quot scheme of points and that this derived scheme is a $\GL_n$-torsor over the stack of coherent length $n$ torsion sheaves. Hence, this stack is an analog of the derived character stack for quasi-projective schemes, and we show that for smooth, Calabi-Yau $X$, it inherits a shifted symplectic structure in the sense of \cite{PTVV} from the one constructed in \cite{BD} on the moduli stack of perfect complexes with proper support. This structure is shown to give a generalization of the classical symplectic structure on character varieties of surfaces of genus $ 1$ constructed by Goldman \cite{gold}.
\end{abstract}
 
\tableofcontents
\section{Introduction}
\subsection{Motivation}
Given a topological surface $\Sigma$, its ($\SL_2$-)representation and character varieties have been of classical interest since the beginning of the $20^{\text{th}}$ century (see the introduction to \cite{goldman-char-variety-history}). Classically, one fixes a (connected, reductive) Lie group $G$, and studies either representations of the fundamental group
\begin{align*}
	\Hom_{\mathsf{Gp}}(\pi_1(\Sigma),G)
\end{align*} 
or these up to conjugacy
\begin{align*}
	\Hom_{\mathsf{Gp}}(\pi_1(\Sigma),G) /G,
\end{align*} 
where $G$ acts by the conjugation action. In the case of $G=\GL_n$, one can give representation varieties the structure of an affine algebraic variety by defining a representable functor \cite[Proposition 1.2]{lub}
\begin{align*}
	\Rep_n(\pi_1(\Sigma)): \mathsf{Aff}^{\text{op}}\to &\mathsf{Set}\\
	\Spec B\mapsto &\Hom_{\mathsf{Gp}}(\pi_1(\Sigma),\GL_n(B)),
\end{align*} 
where $\mathsf{Aff}^{\text{op}}$ denotes the opposite of the category of affine $\mathbb{C} $-schemes. By the adjunction
\[
\begin{tikzcd}
\mathsf{Gp}\arrow[r,shift left,"{\mathbb{C}[-]} "]&\mathsf{Alg}_{\mathbb{C} }\arrow[l,shift left, "(-)^\times"]
\end{tikzcd}
\] 
and the observation that $M_n(\mathbb{C} )^\times=\GL_n(\mathbb{C} )$, where $M_n(\mathbb{C} )$ is the $\mathbb{C} $-algebra of $(n\times n)$-matrices with complex entries, one can reformulate this functor as
\begin{align*}
	\Rep_n(\pi_1(\Sigma))=\Rep_n(A)^{\text{cl}}:\Spec B\mapsto \Hom_{\mathsf{Alg}_{\mathbb{C} }}(A,M_n(B )),
\end{align*} 
where $M_n(B)$ denotes $(n\times n)$-matrices with entries in the $\mathbb{C} $-algebra $B$. Similarly, the $\GL_n$ \textbf{character stack} is defined as
\begin{align*}
	\Rep_n(\pi_1(\Sigma)) /\GL_n = \Rep_n(A) ^{\text{cl}}/\GL_n.
\end{align*} 
On the other hand, the associated $\GL_n$ \textbf{character variety} is the GIT quotient $\Rep_n(A)^{\text{cl}}\git \GL_n$. In the algebraic setting, the derived enhancements of these objects, denoted $\Rep_n(A) $ and $\Rep_n(A) \git \GL_n$, respectively, are already known in the literature, first constructed in \cite{BKR}.In this paper, we study the derived and stacky versions of these constructions, so that there is a correspondence
\begin{align*}
	\Rep_n(A) /\GL_n\leftarrow  (\Rep_n(A) /\GL_n)^{\text{cl}}\to \Rep_n(A)^{\text{cl}}\git \GL_n\to \Rep_n(A) \git \GL_n.
\end{align*} 
In the 1980's, Goldman defined in \cite{gold} a symplectic structure on
\begin{align*}
	\Hom(\pi_1(\Sigma),G) \git G,
\end{align*} 
and its quantization has been of interest in knot theory, see e.g. \cite{turaev-skein,AMR96,AMR98} for the classical story and \cite{BZBJ,jordan-survey} for a modern formulation.

When $A$ is finitely presented and commutative (so that $\Spec A$ is a quasi-projective scheme), the functor
\begin{align*}
	B\mapsto \Hom_{\mathsf{Alg}_{\mathbb{C} }}(A,M_n(B))
\end{align*} 
is also the open subfunctor of the functor of points of the Quot scheme parametrizing length $n$ quotients 
\begin{align*}
	q:(A\otimes_{\mathbb{C} }B)^n\to M
\end{align*} 
for which the induced map $B^n\to M$ is an isomorphism. We call this the framed locus and explain this in detail in Section \ref{sec:coherent-and-quot-reps}. Like the specific case of Hilbert schemes, Quot schemes are a basic example of moduli problems in algebraic geometry and thus have been of interest since Grothendieck's original proof of their representability \cite{grothendieck-quot,nitsure-quot,huybrechts-lehn}. They are used as covers of the stack of coherent sheaves \cite{huybrechts-lehn}; thus, they are also important in the theory of cohomological hall algebras \cite{KV,PS}.

\subsection{Overview}
This paper generalizes results of \cite{BKR} and \cite{symplectic-structure} in several directions. We begin by placing a trace map of \cite{BKR} in a trace formalism context \cite{TFT,HSS,nonlinear-trace}. Next, we generalize the study of such representation and character varieties  for $G=\GL_n$ from $A\leadsto \Rep_n(A)$ to $X\leadsto \Rep_n(X)$, where $X$ is now any perfect stack (see \cite{BFN} for what this means). In fact, our construction takes as input a category $\mathcal{C}$ of the form $\mathsf{QCoh}(X)$ or $A\Mod$ equipped with the natural functor to $k\Mod$ given by pushforward/restriction; in the latter case $\mathcal{C}=A\Mod$, the choice of this restriction functor is equivalent to choosing a perfect generator, so this construction is not Morita invariant. In contrast, the closely related stack $\mathscr{M}_{\mathcal{C}}$ of \cite{TV} \textit{is} Morita invariant. In the process, we construct a derived enhancement of an open subfunctor of the Quot scheme of $n$ points which parametrizes quotients $q:\mathscr{O}^n_X\twoheadrightarrow \mathscr{E}$, where $\mathscr{E}$ is a $S$-flat family of length $n$ torsion sheaves and the adjoint map to $q$, $\mathscr{O}^n_S\to p_{S*}\mathscr{E}$, is an isomorphism. Such derived Quot schemes have been constructed before, for projective $X$ in \cite{BKSv2} and for quasi-projective $X$ in \cite{quot} (see also \cite{kapranov-quot}); however, our construction differs significantly from theirs, as it uses a fiber of the stack of coherent sheaves as input instead of constructing the functor of points directly (see Section \ref{sec:coherent-and-quot-reps}, in particular, Corollary \ref{cor:quot-is-a-scheme}, for details). On their common domains of definition, we expect all three constructions to agree. We check this explicitly for the construction in \cite{quot} for smooth quasi-projective schemes in the appendix.

We show that in the case of a quasi-projective scheme $X$, there is a Fourier-Mukai integral transform between quasi-coherent sheaves on this derived Quot scheme and $\mathsf{QCoh}(X)$. This functor induces a trace (in the sense of \cite{HSS,nonlinear-trace}) map at the level of Hochschild homology which generalizes the trace map of \cite{BKR} from finitely presented commutative algebras to arbitrary quasi-projective schemes, and gives it an especially natural characterization: It arises due to the existence of a universal almost perfect complex on (the noncommutative analog of) $\Spec A \times \Rep_n(A)$, see Sections \ref{sec:trace-maps-I} and \ref{sec:universal-sheaves}.

We next construct a generalization (in the sense of \cite{PTVV}) of the Goldman symplectic structure to (derived) character stacks equipped with a Calabi-Yau structure whose underlying classical truncation is of the form
\begin{align*}
	\Hom(A,M_n) /\GL_n,
\end{align*} 
where $A$ is a commutative smooth algebra. In \cite{PTVV}, the authors construct a shifted symplectic structure on the derived moduli stack of maps $\Map(X_B,BG)$, where for a (compact, oriented) topological manifold $X$, $X_B$ is the constant derived stack with value the singular complex of $X$. This moduli is the derived character stack of $X$; moreover, in the case that $X$ is a compact oriented 2-manifold,  the shifted symplectic structure becomes 0-shifted, see \cite[Theorem 2.5]{PTVV} and the examples following it. Further, this canonical symplectic structure arises from the choice of a symmetric bilinear form on the Lie algebra $\mathfrak g$ of $G$. This symplectic structure on the derived character variety of $X$ recovers the Goldman symplectic structure of \cite{gold} by pulling back to the classical character stack and then restricting to the appropriate smooth, classical locus \cite{PTVV,safronov-goldman}. In Section 2.3 of \cite{PTVV}, the authors also construct a symplectic structure on the moduli stack of perfect complexes on a smooth and proper Calabi-Yau scheme. It turns out that the properness assumption can be dropped \cite{BD},\cite[Theorem D and Lemma 3.7]{BD-is-PTVV}. We use these foundations to construct our symplectic structure by pulling back this already existing one. Symplectic structures on derived character varieties have been constructed before in \cite{symplectic-structure}. Therein, a generalization of derived representation varieties to categories is considered, where algebras are viewed as categories with one object. We did not take this approach, instead viewing algebras as categories (noncommutative spaces) via their categories of modules. Our constructions are functorial with respect to that view (as in e.g. \cite{TV}). We now give a more detailed overview of the results below.
\subsection{Contents}
Let $k$ be a characteristic 0 field, fixed for the rest of this introduction (in fact, for the rest of this paper). In \cite{BKR}, the authors construct and study a trace map 
\begin{align*}
	\mathrm H\mathrm C_*(A)\to \mathbb{L}(A)_n
\end{align*} 
between the cyclic homology of an associative $k$-algebra $A$ and its ``representation homology,'' $\mathbb{L}(A)_n$. This latter homology is by definition derived global functions on the derived $n$-dimensional representation variety of $A$, defined as follows (see Section \ref{sec:notation-and-conventions} for context). As earlier in this introduction, let $\Rep_n(A)^{\text{cl}}$ be the affine variety which represents the functor
\begin{align*}
	B\mapsto \Hom_{\mathsf{Alg}_k}(A,M_n(B)),
\end{align*} 
where $M_n:\mathsf{CAlg}_k\to \mathsf{Alg}_k$ takes a commutative $k$-algebra $B$ to the algebra of $(n\times n)$-matrices with entries in $B$. This scheme is called the ($n$-dimensional) \textbf{representation variety} of $A$. In particular, functions on it are the left adjoint of the functor $M_n$. All these constructions can now be derived as in \cite{BKR}, to obtain a derived representation variety and a left derived functor $\mathbb{L}(-)_n$, and this latter functor is called the \textbf{representation homology}. When $A=k[\pi_1(\Sigma)]$ for a surface $\Sigma$, this is a derived enhancement of the cohomology of the classical representation variety of $A$.

In Section \ref{sec:domain-walls}, we place the above into a trace formalism context, (see \cite{HSS,nonlinear-trace} for the trace formalism we use). Given algebras $A$, $\mathbb{L}(A)_n$, we show that the categories of modules over these algebras admit a  natural ``domain wall,'' i.e., a Fourier-Mukai integral transform with a continuous right adjoint (explained in Sections \ref{sec:domain-walls} and \ref{sec:universal-sheaves}), between them. This point of view gives a natural reason for why the trace map of \cite{BKR} should exist: It follows from the existence of a well-behaved integral kernel (Lemma \ref{lem:A-module-structure} and Proposition \ref{prop:R-is-a-domain-wall}). Indeed, by \cite[Section 3.4]{nonlinear-trace}, this integral transform gives rise to a map
\begin{align*}
	\mathrm H\mathrm H_*(A)\to \mathrm H\mathrm H_*(\mathbb{L}(A)_n).
\end{align*} 
Because $\mathbb{L}(A)_n$ is a commutative algebra, it admits a map
\begin{align*}
	\mathrm H\mathrm H_*(\mathbb{L}(A)_n)\to \mathbb{L}(A)_n.
\end{align*} 
Moreover, the composition
\begin{align*}
	\mathrm H\mathrm H_*(A)\to \mathbb{L}(A)_n
\end{align*} 
is equivariant with respect to the natural $S^1$-action on $\mathrm H\mathrm H_*(A)$ and the trivial $S^1$-action on $\mathbb{L}(A)_n$ \cite[Theorem 2.14]{HSS}. Since $S^1$-coinvariants form an adjunction \cite[Section I.1]{NS}
\begin{align*}
	(-)_{S^1}=\colim_{BS^1}:\mathsf{Vect}^{S^1}\to  \mathsf{Vect}:\triv,
\end{align*} 
where $\triv$ equips a vector space with the trivial $S^1$-action and $\mathsf{Vect}^{S^1}\coloneqq \mathsf{Map}(BS^1,\mathsf{Vect})$ denotes vector spaces with $S^1$-actions, we obtain a map
\begin{align*}
	\mathrm{HC}_*(A)\to \mathbb{L}(A)_n.
\end{align*} 
\begin{thm}[Theorem \ref{thm:trace} in the text]\label{thm:intro-trace-map}
	If $A$ is a connective associative algebra, then the induced map
\begin{align*}
	\mathrm H\mathrm C_*(A)\to \mathbb{L}(A)_n
\end{align*} 
is the trace map of \cite{BKR}.
\end{thm}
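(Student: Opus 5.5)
Both maps $\mathrm{HC}_*(A)\to \mathbb{L}(A)_n$ are natural in $A$ and compatible with colimits, so I will reduce to a universal case. The BKR trace is built as follows. For a free algebra $A=T(V)$ the representation algebra is $\Sym(V\otimes \mathfrak{gl}_n^*)$. The trace $\mathrm{HC}_*(T V)\cong (T V)_\natural \to \Sym(V\otimes\mathfrak{gl}_n^*)$ sends a cyclic word $v_1\cdots v_k$ to $\tr(X_{v_1}\cdots X_{v_k})$, where $X_v$ is the generic matrix attached to $v$. The map is then extended to connective $A$ by taking a cofibrant (semi-free) resolution $QA\to A$ and applying this formula degreewise. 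Both $\mathrm{HC}_*$ and $\mathbb{L}(-)_n$ preserve sifted colimits of connective algebras, and every connective algebra is a geometric realization of free algebras $T(V)$. So it suffices to show that our map agrees with the BKR trace naturally on free algebras $T(V)$ with $V$ a (discrete) vector space. By naturality in $V$ and additivity of both sides in $V$, this reduces further to matching the two maps on the natural generators.

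The first step is to identify the domain wall concretely for $A=T(V)$. By Lemma \ref{lem:A-module-structure} and Proposition \ref{prop:R-is-a-domain-wall}, the integral kernel is the universal module $\mathcal{V}=\mathbb{L}(A)_n^{\oplus n}$, on which $A$ acts through the universal map $\pi_n\colon A\to M_n(\mathbb{L}(A)_n)$. The induced map on Hochschild homology, as in \cite[Section 3.4]{nonlinear-trace}, is then the composite
\begin{align*}
	\mathrm{HH}(A)\xrightarrow{\pi_n}\mathrm{HH}(M_n(\mathbb{L}(A)_n))\xrightarrow{\ \sim\ }\mathrm{HH}(\mathbb{L}(A)_n),
\end{align*}
where the second arrow is the Morita invariance equivalence. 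On Hochschild chains this equivalence is the generalized trace $a_0\otimes\cdots\otimes a_k\mapsto \tr(a_0 a_1\cdots a_k)$, in the Dennis–Loday sense. I expect this identification to be the main obstacle: the trace formalism defines the map abstractly through duality data of the bimodule, and one must check that for a kernel of the form "free module of rank $n$ with an $A$-action via $\pi_n$" it coincides with the classical Morita trace. I would handle this first for $A=k$, where the kernel is $k^n$ and the map is $\mathrm{HH}(k)\to\mathrm{HH}(M_n k)\simeq \mathrm{HH}(k)$. For general $A$, the kernel factors as a composite of $\pi_n^*$ with the Morita bimodule, and I would use the functoriality of traces under composition of domain walls.

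Next I compose with $\mathrm{HH}(\mathbb{L}(A)_n)\to\mathbb{L}(A)_n$, which is the augmentation to degree-$0$ chains (multiplication $b_0\otimes\cdots\otimes b_k\mapsto 0$ for $k>0$ in the commutative setting, and $b_0\mapsto b_0$ in degree $0$). Passing to $S^1$-coinvariants, the target carries the trivial action, so the map factors through $\mathrm{HC}$ \cite[Theorem 2.14]{HSS}. For $A=T(V)$, $\mathrm{HC}(A)$ is quasi-isomorphic to $k\oplus A_\natural$, with cyclic words in degree $0$, and in fact $\overline{\mathrm{HC}}_*(TV)$ is concentrated in degree $0$. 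A map out of it into $\mathbb{L}(A)_n=\Sym(V\otimes \mathfrak{gl}_n^*)$, which is also discrete, is therefore determined by its value on degree-zero chains $a_0$. That value is $\tr(\pi_n(a_0))$, which is exactly the BKR formula $\tr(X_{v_1}\cdots X_{v_k})$.

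Finally, the resulting identification on free algebras is natural in $T(V)$, including on maps that are not induced from linear maps of $V$, because both constructions are functorial in algebra maps. It therefore extends along simplicial resolutions to all connective $A$, giving the theorem.
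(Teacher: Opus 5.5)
Your proposal is correct and follows essentially the same route as the paper. The steps match: both transformations are natural, sifted colimits reduce the problem to tensor algebras $\mathscr{T}(V)$, maps from the connective $\mathrm{HC}_*(\mathscr{T}(V))$ to the discrete $(\mathscr{T}(V))_n$ (hence natural transformations on this subcategory) are determined on $\mathrm{H}^0$, and the degree-zero value is $\bar a\mapsto \tr(\eta(a))$.

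The chain-level identification with the Morita trace that you flag as the main obstacle is not needed; the paper computes the $\mathrm{H}^0$ map directly from the unit and counit using dual bases $e_i, e_i^\vee$. Also, $\mathrm{HC}_*(\mathscr{T}(V))$ is not $k\oplus A_\natural$, since it contains $\mathrm{HC}_*(k)=k[u]$ in positive homological degrees, so it is connectivity, not concentration in degree $0$, that makes your reduction work.
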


In Section \ref{sec:ext-to-stacks}, we extend this definition of derived representation varieties from algebras $A$ to perfect stacks $X$ (see \cite{BFN} for what this means). Let $\mathcal{X}_{A\Mod}$ denote the stack which represents the functor $\Spec B\mapsto \ywidetilde{(A\otimes B)\Mod}$, where $\ywidetilde{\mathcal{C}}$ denotes the maximal subgroupoid of a category $\mathcal{C}$. Similarly, let $\mathcal{X}_{\mathsf{QCoh}(X)}$ for a perfect stack $X$ denote the stack of quasi-coherent complexes on $X$, defined in Section \ref{sec:ext-to-stacks}. We equip both of these stacks with natural maps to the stack of quasi-coherent complexes on a point, $\Vect\coloneqq k\text{-mod}$, defined by pushforward (Lemma \ref{lem:perf-pushforward}). Using either of these categories together with their maps to $\Vect$, we define a derived stack
\begin{align*}
	\mathscr{Rep}_n(\mathcal{C})\coloneqq \mathcal{X}_{\mathcal{C}}\times _{\Vect}\{k^n\} ,
\end{align*} 
where the map $\{k^n\} \to \Vect$ classifies $k^n$. Because the map $ \mathcal{X}_{\mathcal{C}}\to \Vect$ is part of the construction of this moduli, this construction is \textit{not} Morita invariant. We show (Theorem \ref{thm:rep-on-affines}) that the extension, which we call the ``$n$-dimensional representation moduli,'' agrees with representation varieties in the case of $\mathcal{C}=A\Mod$. We compute this stack in the case that $\mathcal{C}=\mathsf{QCoh}(X)$ for $X=\mathbb{P} ^1$ and $X=BG$ with $G$ a reductive algebraic group.

Further, we show in Section \ref{sec:trace-maps-II} that for $\mathcal{C}=\mathsf{QCoh}(X)$ with $X$ a classical quasi-projective scheme, this stack admits a substack of ``coherent representations,'' denoted $\mathscr{Rep}_n^{\text{coh}}(X)$. In Theorem \ref{thm:coherent-reps}, we prove that this substack is 1-Artin, locally almost of finite type, and with affine diagonal. Finally, we show in Corollary \ref{cor:quot-is-a-scheme} that the stack of coherent representations admits a substack 
\begin{align*}
	\Quot^{n,\text{fr}}_{\mathscr{O}^n_X}(X)\subset \mathscr{Rep}_n^{\text{coh}}(X)\subset \mathscr{Rep}_n(\mathsf{QCoh}(X))
\end{align*} 
which is a derived scheme almost of finite type and with affine diagonal whose classical truncation is the open subfunctor of the Quot scheme of points which parametrizes quotients $q: \mathscr{O}^n_{X\times S}\to \mathscr{E}$ for which the induced map $\mathscr{O}^n_S\to p_{S*}\mathscr{E}$ is an isomorphism.  On affine schemes of finite presentation, all of these stacks agree.
\begin{thm}[Proposition \ref{prop:quot-scheme-is-reps} and Corollary \ref{cor:coherent-reps-match-reps} in the text]\label{thm:intro-affine-reps-all-agree}
Let $X=\Spec A$ be an affine scheme of finite presentation. Then
\begin{align*}
	\Quot^{n,\text{fr}}_{\mathscr{O}_X^n}(X)\cong \mathscr{Rep}_n^{\text{coh}}(X)\cong \mathscr{Rep}_n(\mathsf{QCoh}(X))\cong\Rep_n(A).
\end{align*} 
\end{thm}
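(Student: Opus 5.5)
The plan is to prove the chain of equivalences from right to left, anchoring everything on $\Rep_n(A)$. The rightmost equivalence $\mathscr{Rep}_n(\mathsf{QCoh}(X))\cong\Rep_n(A)$ is Theorem \ref{thm:rep-on-affines}, using $\mathsf{QCoh}(\Spec A)\simeq A\Mod$ with the pushforward to $\Vect$ identified with restriction of scalars along $k\to A$. It then suffices to show that the two inclusions
\begin{align*}
	\Quot^{n,\text{fr}}_{\mathscr{O}_X^n}(X)\subset \mathscr{Rep}_n^{\text{coh}}(X)\subset \mathscr{Rep}_n(\mathsf{QCoh}(X))
\end{align*}
are equivalences when $X$ is affine of finite presentation. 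I will check this on $B$-points for connective $B$, because all three objects are derived stacks defined by their functors of points.

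For the second inclusion, take a $B$-point of $\mathscr{Rep}_n(\mathsf{QCoh}(X))$. It is an $A\otimes B$-module $M$ together with an equivalence of $B$-modules $M\simeq B^n$. The substack of coherent representations imposes a condition on $M$: relative almost perfectness, support proper over $\Spec B$, and Tor-amplitude/flatness over $B$. The finiteness and flatness over $B$ hold automatically, since $M$ is free of rank $n$ over $B$. Properness of support holds because, on $\pi_0$, $\Spec \pi_0 M\to\Spec\pi_0 B$ is finite, being cut out by a module that is finite over $\pi_0B$. Almost perfectness of $M$ over $A\otimes B$ follows from $A$ being of finite presentation: a module that is perfect over $B$ is almost perfect over the almost finitely presented $B$-algebra $A\otimes B$. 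This last point I would check using the criterion that $\pi_i$ be finitely generated together with the standard lemma on almost perfect modules over almost finitely presented extensions. This gives Corollary \ref{cor:coherent-reps-match-reps}.

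For the first inclusion, which is Proposition \ref{prop:quot-scheme-is-reps}, I must produce, functorially in $B$, the quotient $\mathscr{O}^n_{X\times \Spec B}\to M$ together with the framing. Given the framing $B^n\simeq M$, the composite $(A\otimes B)^n\to M$ induced by $A\otimes B$-linearity is the desired map. It is surjective on $\pi_0$ because it is already surjective after restricting to $B^n$, and its adjoint $B^n\to p_*M$ is by construction the framing, hence an equivalence. Conversely, a framed quotient forgets to a representation. The two constructions are mutually inverse: the quotient map is determined by its restriction to $B^n$, since $\Hom_{A\otimes B}((A\otimes B)^n,M)\simeq \Hom_B(B^n,M)$.

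The main obstacle is making this last equivalence precise at the level of derived stacks, i.e. showing that the defining fiber product for $\Quot^{n,\text{fr}}$, built from a fiber of the stack of coherent sheaves, has the same space of points. The homotopy coherence is the issue here, not just the $\pi_0$ bijection. To handle it, I would use that surjectivity on $\pi_0$ is a property, so the data of a quotient reduces to a point of the mapping space $\Map_{A\otimes B}((A\otimes B)^n,M)\simeq \Map_B(B^n,M)$ with invertibility imposed. Matching this with the framing space $\{B^n\simeq M\}$ then gives the equivalence of fibers over the stack of coherent modules.
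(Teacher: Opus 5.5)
Your proposal is correct and follows essentially the paper's argument: all the stacks in the chain sit as full substacks inside $\mathscr{Rep}_n(\mathsf{QCoh}(X))\cong\Rep_n(A)$ (Theorem \ref{thm:rep-on-affines}), and the only substantive check is that an $(A\otimes B)$-module $M$ with $M\simeq B^n$ over $B$ has support finite over $\Spec B$ and is almost perfect over $A\otimes B$. The paper proves the latter with exactly the lemma you invoke, taking $R=B$, $S=A\otimes B$ and using that $A$ is almost perfect over the noetherian ring $A\otimes A$; note that the finitely-generated-homotopy criterion can only be used there, not over $A\otimes B$ for general connective $B$. One simplification: since $\Quot^{n,\text{fr}}_{\mathscr{O}_X^n}(X)$ is defined as $\Coh^n_0(X)\times_{\Vect}\pt$, there is no quotient map to match against the framing. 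The step you actually need there is only that finite support together with $M\simeq B^n$ places $M$ in $\Coh^n_0(X)$ (zero-dimensional support, length $n$ on geometric fibers), which you have implicitly but should state.
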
 
\begin{rmk}
	 By the above theorem, this Quot scheme on smooth and affine $X=\Spec A$ is equivalent to the corresponding open subfunctor of the derived enhancement of this Quot scheme defined in \cite{quot} (see Example 4.1, Proposition 4.2, and Definition 3.3 of \textit{op. cit.}). In fact, these also match when $X$ is smooth quasi-projective by Proposition \ref{prop:matchup-with-adhikari}.
\end{rmk} 
Now, let $X$ be a (not necessarily smooth) quasi-projective scheme. We show that the scheme $\Quot^{n,\text{fr}}_{\mathscr{O}^n_X}(X) \times X$ admits an almost perfect complex $\mathscr{R}$ called the ``universal representation.'' This complex is a generalization of the domain wall which defines the map of Theorem \ref{thm:intro-trace-map}.
\begin{thm}[Proposition \ref{prop:R-is-a-domain-wall} in the text]
	The sheaf $\mathscr{R}$ defines a \textbf{domain wall} between the partially extended field theories defined by
\begin{align*}
	\mathsf{QCoh}(X)\quad\text{and}\quad \mathsf{QCoh}(\Quot^{n,\text{fr}}_{\mathscr{O}_X^n}(X)),
\end{align*}
meaning that the Fourier-Mukai integral transform it defines,
\begin{align*}
	\Phi_{\mathscr{R}}:\mathsf{QCoh}(X)\to &\mathsf{QCoh}(\Quot^{n,\text{fr}}_{\mathscr{O}_X^n}(X))\\
	\mathscr{F}\mapsto &p_{\Quot^{n,\text{fr}}_{\mathscr{O}_X^n}(X)*}(p_X^*\mathscr{F}\otimes \mathscr{R})
\end{align*} 
has a continuous right adjoint. Here, $p_X,p_{\Quot^{n,\text{fr}}_{\mathscr{O}_X^n}(X)}$ are the respective projections from $X\times \Quot^{n,\text{fr}}_{\mathscr{O}_X^n}(X)$.
\end{thm}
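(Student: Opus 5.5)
Write $Q\coloneqq \Quot^{n,\text{fr}}_{\mathscr{O}_X^n}(X)$. The plan is to show that $\Phi_{\mathscr{R}}$ preserves compact objects, so that its right adjoint, which exists by the adjoint functor theorem, is continuous. Since $\Phi_{\mathscr{R}}$ is built from $p_X^*$, a tensor product, and $p_{Q*}$, it is colimit-preserving provided $p_{Q*}$ is. The real work is to control $p_{Q*}$ on objects of the form $p_X^*\mathscr{F}\otimes\mathscr{R}$ for $\mathscr{F}$ perfect.

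The key input is the support of $\mathscr{R}$. By construction, $\mathscr{R}$ is the universal family pulled back from the stack of coherent length-$n$ torsion sheaves along the $\GL_n$-torsor $Q\to$ (that stack). Its classical truncation is the universal quotient $\mathscr{E}$ of $\mathscr{O}^n_{X\times Q}$, which is $Q$-flat, has finite support over $Q$, and satisfies $p_{Q*}\mathscr{E}\cong\mathscr{O}_Q^n$. Hence the support $Z\subset X\times Q$ of $\mathscr{R}$ is finite over $Q$, so $p_Q|_Z$ is affine and proper.

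The argument then runs in three steps.

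\begin{enumerate}
\item Show that $p_{Q*}$ commutes with colimits on the full subcategory of sheaves set-theoretically supported on $Z$. This holds because $X$ is quasi-projective, hence quasi-compact and quasi-separated, so pushforward along $p_Q$ is continuous in any case. Alternatively, one can factor through the closed subscheme $Z$, where the relevant map is affine.
\item Show that for $\mathscr{F}$ perfect on $X$, the complex $p_X^*\mathscr{F}\otimes\mathscr{R}$ is almost perfect and supported on $Z$. Its pushforward is then almost perfect, by properness of $Z\to Q$ and the finiteness result for proper maps of locally Noetherian derived schemes.
\item Upgrade almost perfect to perfect. Since $\mathscr{R}$ is flat over $Q$ (Tor-amplitude $0$ relative to $Q$), $p_{Q*}(p_X^*\mathscr{F}\otimes\mathscr{R})$ has bounded Tor-amplitude. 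A bounded Tor-amplitude almost perfect complex is perfect. Indeed, for $\mathscr{F}=\mathscr{O}_X$ the pushforward is $p_{Q*}\mathscr{R}\simeq\mathscr{O}_Q^n$, using the framing identification.
\end{enumerate}

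Because $\mathsf{QCoh}(X)$ is compactly generated by perfect complexes (as $X$ is perfect), $\Phi_{\mathscr{R}}$ then preserves compacts, and the right adjoint is continuous.

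I expect the main obstacle to be the derived flatness in step 3. The statement that $\mathscr{R}$ has relative Tor-amplitude $0$ over $Q$ must be derived from the way $Q$ is cut out as a fiber of $\mathscr{X}_{\mathsf{QCoh}(X)}\to\Vect$ over $\{k^n\}$, together with the description of $\mathscr{Rep}^{\text{coh}}_n(X)$ in Theorem \ref{thm:coherent-reps}. The approach I would try first is to check it fiberwise. For each point $s\in Q$, base change gives $\mathscr{R}|_{X\times\{s\}}$ as a coherent sheaf of length $n$ concentrated in degree $0$, and the fiberwise criterion for Tor-amplitude then yields flatness.

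In the affine case $X=\Spec A$ this reduces to the statement of Lemma \ref{lem:A-module-structure}: there $\mathscr{R}$ corresponds to $\mathbb{L}(A)_n^{\oplus n}$ as an $A\otimes\mathbb{L}(A)_n$-module. That module is free of rank $n$ over $\mathbb{L}(A)_n$, which gives the required compactness directly and serves as a consistency check for the general argument.
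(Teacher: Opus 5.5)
Your proposal is correct and follows essentially the same route as the paper. Both arguments reduce to showing that $\Phi_{\mathscr{R}}$ preserves compacts, obtain almost perfectness of $p_{Q*}(p_X^*\mathscr{F}\otimes\mathscr{R})$ from the proper (indeed finite) support of $\mathscr{R}$, and upgrade to perfectness by bounding Tor-amplitude through the projection formula, relative flatness of $\mathscr{R}$, and $t$-exactness of $p_{Q*}$ on finitely supported sheaves.

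The step you flag as the main obstacle is actually automatic, so no fiberwise argument is needed. $\mathscr{R}$ is pulled back from the universal family on $\Coh^n_0(X)\times X$, and the $S$-points of that family lie in $\mathsf{Coh}_S(X\times S)=\mathsf{APerf}^{\ge 0}_S(X\times S)$ by definition, so relative Tor-amplitude $0$ holds from the outset.
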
 
In particular, this theorem implies that in addition to functors between the respective categories, $\mathscr{R}$ induces a map on Hochschild homology by a theorem of \cite{nonlinear-trace}, which is moreover $S^1$-equivariant.
 
\begin{thm}[Theorem \ref{thm:stack-domain-wall} in the text]
	If $X=\Spec A$ is a finitely presented affine scheme, then the domain wall defined by $\mathscr{R}$ precisely recovers the domain wall which gives rise to the trace map of \cite{BKR}.	
\end{thm}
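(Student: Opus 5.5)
The plan is to reduce everything to the affine identification of Theorem \ref{thm:intro-affine-reps-all-agree} and then compare integral kernels. For $X=\Spec A$ finitely presented, that theorem gives an equivalence $\Quot^{n,\text{fr}}_{\mathscr{O}_X^n}(X)\cong \Rep_n(A)$. Here $\Rep_n(A)$ is the derived affine scheme with functions $\mathbb{L}(A)_n$, and the equivalence passes through $\mathscr{Rep}_n(\mathsf{QCoh}(X))\cong\mathscr{Rep}_n(A\Mod)$. Under it, $\mathsf{QCoh}(X\times\Quot^{n,\text{fr}}_{\mathscr{O}_X^n}(X))\simeq (A\otimes\mathbb{L}(A)_n)\Mod$. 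So the claim becomes: $\mathscr{R}$, viewed as an $(A\otimes \mathbb{L}(A)_n)$-module, is equivalent to the bimodule kernel of Lemma \ref{lem:A-module-structure}, which is $\mathbb{L}(A)_n^{\oplus n}$ with the $A$-action given by the universal representation $A\to M_n(\mathbb{L}(A)_n)$. We also need the two Fourier--Mukai transforms to correspond; this is the formal part. On affines, $p_{\Quot*}(p_X^*\mathscr{F}\otimes\mathscr{R})$ is just $M\mapsto M\otimes_A \mathscr{R}$, restricted to $\mathbb{L}(A)_n$. So once the kernels match as bimodules, the functors agree, and by uniqueness of adjoints so do their continuous right adjoints.

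To identify the kernels I would use the definitions of the universal objects. First, $\mathscr{R}$ is the pullback of the tautological object along $\Quot^{n,\text{fr}}_{\mathscr{O}_X^n}(X)\to\mathcal{X}_{\mathsf{QCoh}(X)}$: a family of quasi-coherent complexes on $X$, with a trivialization $k^n\otimes\mathscr{O}\simeq p_*(\text{family})$ coming from the fiber product over $\Vect$. Second, by Theorem \ref{thm:rep-on-affines}, a $B$-point of $\mathscr{Rep}_n(A\Mod)$ is an $A\otimes B$-module $M$ together with an equivalence $B^n\simeq M$ of $B$-modules. Equivalently, it is an $A$-action on $B^n$, i.e.\ a map $A\to M_n(B)=\End_B(B^n)$ of derived associative algebras. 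This is the point where the derived representation scheme of \cite{BKR} corepresents the functor.

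With these in hand, I would evaluate at the universal point $B=\mathbb{L}(A)_n$ and the identity map. The universal module is $\mathbb{L}(A)_n^{\oplus n}$ with the $A$-action given by the universal map $A\to M_n(\mathbb{L}(A)_n)$. This is exactly the kernel used to define the domain wall in Section \ref{sec:domain-walls}. Since both kernels classify the same point of the same stack, they agree, and so do the induced maps on $\mathrm{HH}_*$ and $\mathrm{HC}_*$.

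The main obstacle is making the middle step homotopy-coherent. One must check that $\mathcal{X}_{A\Mod}\times_{\Vect}\{k^n\}$ corepresents maps of $E_1$-algebras into $\End_B(B^n)$, functorially in simplicial $B$. This is essentially an $(\infty,1)$-statement: modules over $A$ with a fixed underlying object are the same as algebra maps from $A$ into its endomorphisms. I would take it from Theorem \ref{thm:rep-on-affines}, which is assumed. The remaining work is bookkeeping: checking that the equivalences in Theorem \ref{thm:intro-affine-reps-all-agree} are compatible with the pullback of universal families. I would handle that by noting that every map in the chain is defined by restricting the tautological family, so universal objects pull back to universal objects.
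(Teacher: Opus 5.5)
Your proposal is correct and follows essentially the same route as the paper. Both arguments reduce via the affine identification $\Quot^{n,\text{fr}}_{\mathscr{O}_X^n}(X)\cong\Rep_n(A)$, get freeness of rank $n$ from the framing (the paper cites Proposition \ref{prop:R-is-free-of-rank-n}), identify the $A$-action with the universal representation $QA\to M_n(\mathbb{L}(A)_n)$ through the universal property of $\mathscr{R}$, and read off $\Phi_{\mathscr{R}}(M)=M\otimes_A\mathscr{R}$; the differences are cosmetic: the paper phrases the action identification as naturality in $B$ of the maps $QA\to M_n(B)$ where you evaluate at the universal point, and it writes the right adjoint explicitly as $N\mapsto N\otimes_{\mathbb{L}(A)_n}\Hom_{\mathbb{L}(A)_n}(\mathscr{R},\mathbb{L}(A)_n)\cong N^n$ where you invoke uniqueness of adjoints.
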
 
Thus, this generalizes the story of \cite{BKR} by initiating the study of analogous trace maps for arbitrary classical quasi-projective schemes.

Let $\Coh^n_0(X)$ denote the derived stack of length $n$ torsion sheaves on a classical quasi-projective scheme $X$ (as defined in \cite{PS} and reviewed in \S\ref{sec:trace-maps-II}). There is a canonical projection map
\begin{align*}
	\Quot^{n,\text{fr}}_{\mathscr{O}^n_X}(X)\to \Coh^n_0(X),
\end{align*} 
which informally can be described as sending a quotient $\mathscr{O}^n_X\twoheadrightarrow \mathscr{E}$, where $\mathscr{E}$ is length $n$ torsion, to $\mathscr{E}$ itself. We show (Proposition \ref{prop:quot-scheme-is-torsor}) that this map is a $\GL_n$-torsor; thus, $\Coh^n_0(X)$ is the analog of the derived character variety $\Rep_n(A) /\GL_n$ for non-affine schemes. When $X$ is furthermore smooth, this stack admits a formally \'etale map
\begin{align*}
	f:\Coh^n_0(X)\to \Perf_{\text{prop}}(X)
\end{align*} 
to the stack of perfect complexes with proper support. In the case that $X$ is Calabi-Yau of dimension $d$, this latter stack admits a $(2-d)$-shifted symplectic structure by \cite{BD}.
\begin{thm}[Theorem \ref{thm:symplectic-structure} in the text]
	Let $X$ be a smooth classical quasi-projective scheme equipped with a trivialization $\mathscr{O}_X\cong K_X$ of its canonical bundle. Then $\Coh^n_0(X)$ admits a $(2-d)$-shifted symplectic structure induced from the one on $\Perf_{\text{prop}}(X)$ by pullback along $f$.
\end{thm}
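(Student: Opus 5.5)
The strategy is to pull back the $(2-d)$-shifted symplectic form $\omega$ on $\Perf_{\text{prop}}(X)$ from \cite{BD} along the formally \'etale map $f:\Coh^n_0(X)\to \Perf_{\text{prop}}(X)$. Closed $p$-forms of degree $2-d$ pull back along any map of derived stacks, since the de Rham complex is functorial. So $f^*\omega$ is automatically a closed $2$-form of the right degree on $\Coh^n_0(X)$, and the whole content is non-degeneracy. That property is pointwise: we need the induced map $\mathbb{T}_{\Coh^n_0(X)}\to \mathbb{L}_{\Coh^n_0(X)}[2-d]$ to be an equivalence.

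The key steps, in order, are as follows.

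\begin{enumerate}
\item Check that $\Coh^n_0(X)$ has a cotangent complex that is perfect, or at least dualizable, so that non-degeneracy makes sense. Via the $\GL_n$-torsor of Proposition \ref{prop:quot-scheme-is-torsor}, this reduces to the statement that $\Quot^{n,\text{fr}}_{\mathscr{O}^n_X}(X)$ is locally almost of finite type. That is Corollary \ref{cor:quot-is-a-scheme} and Theorem \ref{thm:coherent-reps}, and smoothness of $X$ upgrades it to perfectness.
\item Use formal \'etaleness of $f$ to get an equivalence $f^*\mathbb{L}_{\Perf_{\text{prop}}(X)}\xrightarrow{\sim}\mathbb{L}_{\Coh^n_0(X)}$, and dually for tangent complexes.
\item Identify the non-degeneracy map of $f^*\omega$ with $f^*$ of the non-degeneracy map of $\omega$, composed with these equivalences. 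This is a formal compatibility of the contraction $\mathbb{T}\to\mathbb{L}[2-d]$ with pullback of forms: for any map $g$, the map induced by $g^*\omega$ factors as $\mathbb{T}_{\mathrm{source}}\to g^*\mathbb{T}_{\mathrm{target}}\to g^*\mathbb{L}_{\mathrm{target}}[2-d]\to\mathbb{L}_{\mathrm{source}}[2-d]$.
\item Since the outer two arrows are equivalences by step 2, and the middle one is $f^*$ of an equivalence, the composite is an equivalence, and $f^*\omega$ is symplectic.
\end{enumerate}

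The main obstacle is justifying that $f$ is formally \'etale, in the precise sense that $\mathbb{L}_f\simeq 0$, if that has not already been fully established when $f$ is introduced. First I would construct $f$ by sending a family of length $n$ torsion sheaves $\mathscr{E}$ on $X\times S$ to the same object viewed as a perfect complex. Perfectness holds by smoothness of $X$, and properness of support holds since the support is finite over $S$. I would then compare deformation theories. At an $S$-point $\mathscr{E}$, the tangent complex of $\Perf_{\text{prop}}(X)$ is $p_{S*}\mathscr{H}om(\mathscr{E},\mathscr{E})[1]$. The tangent complex of $\Coh^n_0(X)$, as defined in \cite{PS}, is the same $\mathrm{RHom}$ complex. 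So the morphism of tangent complexes is the identity, which gives $\mathbb{L}_f=0$.

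A secondary subtlety is that $\Coh^n_0(X)$ is not truncated to the open substack of $\Perf_{\text{prop}}(X)$ in the classical sense. It is only an open substack of the derived stack of perfect complexes with proper support, namely those whose fibers are concentrated in degree $0$ and of length $n$. If that openness is available, I would use it instead: an open immersion is étale, which gives the result directly.
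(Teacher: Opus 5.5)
Your proposal is correct and follows essentially the same route as the paper: pull back $\omega$ along $f$, note that closedness is automatic, and deduce nondegeneracy from $\mathbb{L}_f\simeq 0$. That vanishing identifies the map $\mathbb{T}_{\Coh^n_0(X)}\to\mathbb{L}_{\Coh^n_0(X)}[2-d]$ with $f^*$ of the nondegeneracy equivalence of $\omega$. The only difference is how formal \'etaleness of $f$ is justified: the paper cites \cite[Corollary 2.19]{PS} for $\Coh_{\text{prop}}(X)\to\Perf_{\text{prop}}(X)$ and uses that $\Coh^n_0(X)\subset\Coh_{\text{prop}}(X)$ is open and closed, whereas you sketch it via a deformation-theoretic comparison of tangent complexes (or openness), which is fine.
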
 
This theorem has been shown for projective $X$ in \cite{BKSv2}; as far as we know, the extension to quasi-projective $X$ in the context of framed representation moduli is new. Moreover, the above extension to quasi-projective schemes thus defines a shifted symplectic structure on the derived character variety
\begin{align*}
	\Rep_n(A) /\GL_n
\end{align*} 
when $A$ is a smooth commutative algebra of finite presentation and the stack is equipped with a Calabi-Yau structure. We expect that this symplectic structure agrees with the structure obtained in \cite{symplectic-structure} on derived character varieties.

\subsection{Notation and Conventions}\label{sec:notation-and-conventions}
All categories considered are without further comment $\infty$-categories and all functors derived functors/functors between $\infty$-categories \cite{HTT}. For example, $\Hom$ denotes the derived functor $\R \Hom$, $\otimes $ denotes the derived functor $\otimes ^{\mathbb{L}}$, etc. Schemes are likewise assumed derived (see \cite{GR} for an introduction). Any object, map, or functor will be explicitly called ``classical'' if it is supposed to be the underived version. We always work over a fixed field $k$ of characteristic 0, and our gradings will always be cohomological. All algebras are assumed to be unital.

Our running assumptions on stacks is that they are locally almost of finite type (laft) with affine diagonal. The exception is in Section \ref{sec:ext-to-stacks}, where we use the big stacks of quasi-coherent complexes (which are not laft). If $X$ is a scheme or stack, we denote by $X^\text{cl}$ the classical truncation \cite[Chapter 2]{GR}. We use sans serif font for names of categories; e.g., $\mathsf{Perf}(X)$ denotes the $(\infty,1)$-category of perfect complexes on a scheme or stack $X$, and we use ordinary font for names of stacks; e.g., $\Perf(X)$ is the stack of perfect complexes on $X$.

\subsection{Glossary of Categories}
Since $k$ is assumed to be a characteristic $0$ field throughout, the category $\mathsf{CAlg}_k$ (often denoted simply $\mathsf{CAlg}$) is the category of commutative differential graded $k$-algebras. $\mathsf{CAlg}^{\le 0}\subset \mathsf{CAlg}$ is the full subcategory of connective commutative algebras. It is our model for (the opposite to) the category of derived affine schemes, and we write $\mathsf{Aff}$ for its opposite category.

We denote by $\mathsf{Alg}_k$ the category of associative differential graded $k$-algebras.

For $A \in \mathsf{Alg}_k$, we let $\mathsf{LMod}_A$ denote the stable category of left $A$-modules. Similarly, $\mathsf{RMod}_A$ denotes the stable category of right $A$-modules. In particular, if $A\in \mathsf{CAlg_k}\subset \mathsf{Alg}_k$, then we denote $\mathsf{LMod}_A\eqcolon A\Mod$ and $\mathsf{QCoh}(\Spec A)\coloneqq A\Mod$. The subcategory of compact objects is denoted by $\mathsf{LPerf}_A$; in the case that $A$ is commutative, we write $\mathsf{Perf}(\Spec A)\coloneqq A\text{-}\mathsf{perf}$. The category $A\Mod$ comes equipped with a canonical $t$-structure defined in \cite[Chapter 2 Section 1.2]{GR}.

For a prestack $X$, we let $\mathsf{QCoh}(X)\coloneqq \lim_{\Spec A \to X} A\Mod$, where the limit is taken over all derived affine schemes mapping to $X$. We also have the category $\mathsf{Perf}(X)\coloneqq \lim_{\Spec A \to X}A\text{-}\mathsf{perf}$.

We denote by $\mathsf{Mor}_k$ the Morita $(\infty,2)$-category of $k$-algebras, bimodules, and intertwiners of \cite{haugseng-definition-of-morita-bicat}. Informally, objects of this category are algebras, 1-morphisms $A\to B$ are $(A,B)$-bimodules, and 2-morphisms are intertwining bimodule-homomorphisms.

We denote by $\mathsf{Pr}^\text{L}$ the $(\infty,2)$-category of presentable $(\infty,1)$-categories, colimit preserving functors, and natural transformations as in \cite{HA}.

We let $\mathsf{Cat}$ denote the $(\infty,2)$-category of all $(\infty,1)$-categories, colimit preserving functors, and natural transformations.

The category of spaces, equivalently, of  $\infty$-groupoids, is denoted $\mathsf{Spc}$. This category admits a natural inclusion $\mathsf{Spc}\hookrightarrow \mathsf{Cat}$.
\subsection{Acknowledgments}
I would like to thank my advisor, David Nadler, for helping me with the TQFT portion of this project and for his continued support, encouragement, and advice during this project. This paper would not have been completed without his support. I would also like to thank Yuri Berest for suggesting the portion on extending representation varieties to stacks, as it led to the most interesting (in my opinion) results. Ansuman Bardalai taught me much of the basic (and not so basic) algebraic geometry that I used during the completion of this project and answered many of my algebro-geometric confusions related to this project. I also benefited from conversations with Daigo Ito, Kabir Kapoor, and Theo Johnson-Freyd related to this project.

\section{Domain Walls}\label{sec:domain-walls}
\subsection{Dualizability}
Consider the Morita $(\infty,2)$-category of $k$-algebras, bimodules, and intertwiners, $\mathsf{Mor}_k$, which is defined in e.g. \cite{haugseng-definition-of-morita-bicat}. We will say simply ``2-category'' in the sequel, but we always mean ``$(\infty,2)$-category.'' We now record some basic results implied by \cite[Theorem 5.1]{GS}.
\begin{prop}[Gwilliam-Scheimbauer]
	Every object $A\in\mathsf{Mor}_k$ is dualizable with dual $A^{\text{op}}$, coevaluation ${}_kA_{A\otimes A^{\text{op}}}$, and evaluation ${}_{A^{\text{op}}\otimes A}A_k$.
\end{prop}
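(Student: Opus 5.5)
To prove the statement, I will write down candidate duality data and verify the two zig-zag (triangle) identities directly in $\mathsf{Mor}_k$. Composition of 1-morphisms in $\mathsf{Mor}_k$ is the relative tensor product of bimodules. The unit is $k$, and the monoidal structure is $\otimes_k$. The claimed dual of $A$ is $A^{\text{op}}$. The coevaluation is a 1-morphism $k\to A\otimes A^{\text{op}}$, i.e.\ a $(k,A\otimes A^{\text{op}})$-bimodule, and I take it to be $A$ with its two-sided (right $A\otimes A^{\text{op}}$) action. The evaluation is a 1-morphism $A^{\text{op}}\otimes A\to k$, and I take it to be $A$ with the analogous left $A^{\text{op}}\otimes A$-action.

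The main step is computing the snake composite
\begin{align*}
A\simeq k\otimes A\xrightarrow{\text{coev}\otimes \id_A} A\otimes A^{\text{op}}\otimes A\xrightarrow{\id_A\otimes \text{ev}} A\otimes k\simeq A.
\end{align*}
In bimodule language, $\text{coev}\otimes\id_A$ is the $(A, A\otimes A^{\text{op}}\otimes A)$-bimodule $A\otimes_k A$. Here the left $A$ acts on the second factor, and the right $A\otimes A^{\text{op}}\otimes A$ acts through the first factor by bimodule multiplication and through the second by right multiplication. Similarly, $\id_A\otimes\text{ev}$ is $A\otimes_k A$ as an $(A\otimes A^{\text{op}}\otimes A, A)$-bimodule.

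Tensoring these over $A\otimes A^{\text{op}}\otimes A$ reduces, by associativity of relative tensor products, to
\begin{align*}
A\otimes_{A\otimes A^{\text{op}}}(A\otimes A)\otimes_A A \simeq A.
\end{align*}
This is the standard computation showing that the composite is the identity bimodule ${}_AA_A$. It is a derived statement, but it holds because $A\otimes_A M\simeq M$ holds $\infty$-categorically, for instance via bar constructions. The second zig-zag for $A^{\text{op}}$ is the same computation with left and right interchanged.

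The main obstacle is coherence. In an $(\infty,2)$-category, dualizability only requires the zig-zag composites to be \emph{equivalent} to identities in the homotopy 2-category. So it suffices to produce these equivalences of bimodules, and no higher coherences need to be checked. This is exactly the content of \cite[Theorem 5.1]{GS}, which treats dualizability in the higher Morita category. I will therefore cite that result for the setup of $\mathsf{Mor}_k$ as a symmetric monoidal $(\infty,2)$-category, and carry out only the bimodule identification above explicitly. The point needing care is tracking the left and right actions under the identification of $(A\otimes A^{\text{op}})$-modules with $A$-bimodules.
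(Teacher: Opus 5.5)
Your argument is correct, but it is not the paper's route. The paper gives no argument beyond recording the statement as a consequence of \cite[Theorem 5.1]{GS}, whereas you verify the duality by hand. You use that dualizability of an object depends only on the homotopy category of the underlying monoidal $(\infty,1)$-category (\cite[\S4.6.1]{HA}), so bimodule equivalences for the two snake composites suffice. You then compute the composite by contracting one factor of $A\otimes A^{\text{op}}\otimes A$ at a time. Contracting the third factor gives $A\otimes_A A\simeq A$. What is left is $A\otimes_{A\otimes A^{\text{op}}}(A\otimes A^{\text{op}})\simeq A$, because $A\otimes A$ with the outer action $(a\otimes b)\cdot(x\otimes y)=ax\otimes yb$ is the free rank-one $A\otimes A^{\text{op}}$-module. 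Your displayed formula $A\otimes_{A\otimes A^{\text{op}}}(A\otimes A)\otimes_A A$ compresses this. In a write-up, spell out which factor acts on which copy of $A$, and check that the residual left and right actions are the standard ones on ${}_AA_A$ (they are).

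What your route buys is transparency: it shows why exactly the stated data ${}_kA_{A\otimes A^{\text{op}}}$ and ${}_{A^{\text{op}}\otimes A}A_k$ work, which is what enters the trace formula (\ref{eq:HH-map}). It needs from \cite{GS} only the existence of $\mathsf{Mor}_k$ as a symmetric monoidal $(\infty,2)$-category with composition given by relative tensor product. The citation buys generality: \cite{GS} treats $E_n$-algebras and higher dualizability in a model where all coherences are built in. It leaves the extraction of the explicit duality data to the reader.
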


Since we're in a 2-category, it is natural to ask for dualizable 1-morphisms in addition to objects. For this, we require unit and counit 2-morphisms which exhibit a given 1-morphism as dualizable \cite{TFT}. For example, a 1-morphism $M:A\to B$ is right dualizable with right dual $M^R$ if there exist unit and counit maps, $\eta:A\to M\otimes _BM^R$ and $\varepsilon :M^R\otimes _AM\to B$, such that the following compositions compose to $\id_M$ and $\id_{M^R}$, respectively \cite[Definition 4.6.1.1]{HA}.
\begin{align*}
	M=A\otimes _AM\xrightarrow{\eta\otimes \id_M}&M\otimes _BM^R\otimes _AM\xrightarrow{\id_M\otimes \varepsilon }M\otimes _BB=M\\
	M^R=M^R\otimes _AA\xrightarrow{\id_{M^R}\otimes \eta}&M^R\otimes _AM\otimes _BM^R\xrightarrow{\varepsilon \otimes \id_{M^R}}B\otimes _BM^R=M^R.
\end{align*}
\begin{lem}\label{lem:eval-exists}
	Let $M$ be an $(A,B)$-bimodule. Then there are canonical evaluation maps
	\begin{align*}
		\Hom_B(M,B)\otimes _A M &\to B\\
		M\otimes _B\Hom_A(M,A)&\to A.
	\end{align*} 
\end{lem}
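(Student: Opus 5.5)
The plan is to build each evaluation map as the counit of a tensor--hom adjunction, so that canonicity and bimodule linearity come for free. For the first map, let $M$ be an $(A,B)$-bimodule. The object $\Hom_B(M,B)$ of right $B$-module maps is a $(B,A)$-bimodule. Its left $B$-action comes from the left action of $B$ on itself, and its right $A$-action comes from precomposing with the left $A$-action on $M$. The map we want should be a $(B,B)$-bimodule map
\begin{align*}
	\Hom_B(M,B)\otimes_A M\to B.
\end{align*}
I will obtain it from the adjunction
\begin{align*}
	-\otimes_A M:\mathsf{RMod}_A\rightleftarrows \mathsf{RMod}_B:\Hom_B(M,-),
\end{align*}
which is the standard adjunction for the $(A,B)$-bimodule $M$, as in \cite[Section 4.6]{HA}. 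Its counit at the object $B\in\mathsf{RMod}_B$ is exactly a map $\Hom_B(M,B)\otimes_A M\to B$ of right $B$-modules. Informally, it sends $\phi\otimes m\mapsto \phi(m)$.

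The step that needs care is showing that this counit is also left $B$-linear, that is, a map of $(B,B)$-bimodules and hence a 2-morphism in $\mathsf{Mor}_k$. My first approach is to work in the bimodule setting from the start. Take the adjunction between $(B,A)$-bimodules and $(B,B)$-bimodules given by $-\otimes_A M$ and $\Hom_B(M,-)$; this is the same adjunction tensored over $k$ with $\mathsf{LMod}_B$. Its counit at ${}_BB_B$ then gives a bimodule map whose underlying map of right modules is the previous one. Naturality of the counit in the object $B$ also shows that the map commutes with the left $B$-action, since that action is by endomorphisms of ${}_BB$ in $\mathsf{RMod}_B$. This is enough for ``canonical'' in the $\infty$-categorical sense: the map is determined up to a contractible space of choices as the counit of an adjunction.

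The second map is symmetric. Take $\Hom_A(M,A)$ to be left $A$-linear maps, which form a $(B,A)$-bimodule. Use the adjunction $M\otimes_B-:\mathsf{LMod}_B\rightleftarrows\mathsf{LMod}_A:\Hom_A(M,-)$ and take its counit at $A$. This gives $M\otimes_B\Hom_A(M,A)\to A$, and it is $(A,A)$-bilinear by the same argument. Alternatively, one can apply the first case to $M$ regarded as a $(B^{\text{op}},A^{\text{op}})$-bimodule.
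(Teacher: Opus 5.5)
Your proposal is correct and is essentially the paper's proof: the paper likewise obtains $\Hom_B(M,B)\otimes_A M\to B$ as the counit at $B$ of the adjunction $-\otimes_A M\dashv \Hom_B(M,-)$ between $\mathsf{RMod}_A$ and $\mathsf{RMod}_B$, and says only that the second map is similar. Your upgrade of the counit to a $(B,B)$-bimodule map via the bimodule-level adjunction is not in the paper but is a sound refinement, and your explicit treatment of the second map fills in what the paper leaves to the reader (either through $M\otimes_B-\dashv\Hom_A(M,-)$ or by regarding $M$ as a $(B^{\text{op}},A^{\text{op}})$-bimodule).
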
 
\begin{proof}
	The functor
	\begin{align*}
		-\otimes _A M: \mathsf{RMod}_A\to \mathsf{RMod}_B
	\end{align*} 
	has a right adjoint $\Hom_B(M,-)$, and the counit of the adjunction evaluated at $B$ is precisely
	\begin{align*}
		\Hom_B(M,B)\otimes _AM\to B.
	\end{align*} 
	The proof of the existence of the evaluation
	\begin{align*}
		M\otimes _B\Hom_A(M,A)\to A
	\end{align*} 
	is similar.
\end{proof}

\begin{prop}[Lurie]\label{prop:dual}
	Let $M$ be an $(A,B)$-bimodule viewed as a 1-morphism in $\mathsf{Mor}_k$. Then $M$ has a left (resp. right) dual if and only if it is a perfect $A$-module (resp. $B$-module) with left (resp. right) dual $M^L=\Hom_A(M,A)$ (resp. $M^R=\Hom_B(M,B)$).
\end{prop}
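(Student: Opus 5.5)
I will prove the statement for right duals; the left-dual case is the same argument with the roles of $A$ and $B$ and of left and right modules exchanged. The plan is to reduce dualizability of $M$ in $\mathsf{Mor}_k$ to dualizability of the underlying right $B$-module in the symmetric-monoidal-free setting of \cite[Section 4.6]{HA}, and then to use the standard fact that dualizable modules are exactly the compact, that is perfect, ones.

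\emph{Necessity.} Suppose $M:A\to B$ has a right dual $M^R$ with unit $\eta$ and counit $\varepsilon$ satisfying the triangle identities. Then the functor $-\otimes_A M:\mathsf{RMod}_A\to\mathsf{RMod}_B$ has the right adjoint $-\otimes_B M^R$, since $\eta$ and $\varepsilon$ induce the unit and counit of an adjunction. On the other hand, by the proof of Lemma \ref{lem:eval-exists} the right adjoint is also $\Hom_B(M,-)$. Uniqueness of adjoints gives a natural equivalence
\begin{align*}
	\Hom_B(M,-)\simeq -\otimes_B M^R,
\end{align*}
and evaluating at $B$ identifies $M^R\simeq\Hom_B(M,B)$. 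Because $-\otimes_B M^R$ preserves all colimits, so does $\Hom_B(M,-)$. Since $\mathsf{RMod}_B$ is compactly generated and stable, this means $M$ is a compact right $B$-module, that is, perfect.

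\emph{Sufficiency.} Suppose $M$ is perfect over $B$ and set $M^R=\Hom_B(M,B)$, which is a $(B,A)$-bimodule. Take $\varepsilon$ to be the evaluation map of Lemma \ref{lem:eval-exists}. For the unit, I use the natural map
\begin{align*}
	N\otimes_B\Hom_B(M,B)\to\Hom_B(M,N).
\end{align*}
This map is an equivalence for $N=B$. Both sides are exact in $M$, so it is an equivalence for all $M$ in the thick subcategory generated by $B$, which is exactly the perfect modules. Applying this with $N=M$ gives $M\otimes_B M^R\simeq\End_B(M)$, and $\eta$ is then the action map $A\to\End_B(M)$ coming from the left $A$-module structure. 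The triangle identities reduce, again by thick-subcategory induction on $M$, to the case $M=B$, where they are tautological.

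The main obstacle is making sure these constructions are genuinely bimodule maps, coherently, in the $(\infty,2)$-category $\mathsf{Mor}_k$ rather than just maps of underlying modules. To handle this I would invoke \cite[Proposition 4.6.2.1 / Section 4.6.1]{HA}, which characterizes dualizable bimodules in exactly these terms. The remaining identification, that compact objects of $\mathsf{RMod}_B$ are the perfect modules, is \cite[Proposition 7.2.4.2]{HA}.
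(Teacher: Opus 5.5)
Your argument is correct in substance but is organized differently from the paper's. The paper's proof is essentially a chain of citations. It uses \cite[Propositions 4.6.2.1 and 4.6.2.13]{HA} to say that dualizability of $M$ in $\mathsf{Mor}_k$ is detected on the underlying one-sided module, and \cite[Proposition 7.2.4.2]{HA} to identify dualizable modules with perfect ones. It then uses \cite[Proposition 7.2.4.4]{HA} to identify the adjoint given by tensoring with the dual with the corepresented functor $\Hom_A(M,-)$, whence $M^L\cong\Hom_A(M,A)$.

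Your necessity direction replaces the first two citations with a self-contained argument. The duality data give an adjunction $-\otimes_A M\dashv -\otimes_B M^R$, and uniqueness of adjoints compares it with the adjunction $-\otimes_A M\dashv\Hom_B(M,-)$ of Lemma \ref{lem:eval-exists}. Hence $\Hom_B(M,-)$ preserves colimits, so $M$ is compact, and evaluating at $B$ gives $M^R\simeq\Hom_B(M,B)$. This makes transparent why perfectness enters (compactness is exactly colimit-preservation of the right adjoint) and why the dual is $\Hom_B(M,B)$. The paper's route is shorter and gets both directions, with all coherences, from one imported result.

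One step of your sufficiency direction does not work as written. The triangle identities cannot be reduced to the case $M=B$ by thick-subcategory induction, because they involve $\eta$ and $\otimes_A$, i.e.\ the left $A$-action, which is not carried along cones and retracts in $\mathsf{RMod}_B$. What the thick-subcategory argument does legitimately give is that the natural map $N\otimes_B M^R\to\Hom_B(M,N)$ is an equivalence for every $N$. This depends only on $M$ as a right $B$-module, and it yields an equivalence $-\otimes_B M^R\simeq\Hom_B(M,-)$ of functors $\mathsf{RMod}_B\to\mathsf{RMod}_A$.

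You can then proceed in either of two ways:
\begin{itemize}
\item Transport the adjunction of Lemma \ref{lem:eval-exists} across this equivalence. Passing from that adjunction to duality data in $\mathsf{Mor}_k$ is what you, and the paper, take from \cite[Proposition 4.6.2.1]{HA}.
\item Check the triangle identities up to homotopy directly under $M\otimes_B M^R\simeq\End_B(M)$. There the two composites become the action maps $A\otimes_A M\to M$ and $M^R\otimes_A A\to M^R$.
\end{itemize}
Since you already fall back on \cite[Proposition 4.6.2.1]{HA} for coherence, this is a repair of exposition rather than a real gap.
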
 
\begin{proof}
	Propositions 4.6.2.1 and 4.6.2.13 of \cite{HA} state that $M$ is left dualizable as an $(A,B)$-bimodule if and only if the restriction of $M$ to an $A$-module is left dualizable. We then observe that dualizable $A$-modules are precisely the perfect $A$-modules by \cite[Proposition 7.2.4.2]{HA}. The existence of the left dual implies that the functor
	\begin{align*}
		-\otimes _A M: \mathsf{RMod}_A\to \mathsf{RMod}_B
	\end{align*} 
	has a left adjoint given by $-\otimes _B M^L$ by Proposition 4.6.2.1 of \textit{op. cit.} Since $M$ is perfect as a left $A$-module, then (the dual to) Proposition 7.2.4.4 of \textit{op. cit.} shows that 
	\begin{align*}
		-\otimes _A M^L:\mathsf{RMod}_A\to k\Mod
	\end{align*} 
	is given by the functor corepresented by $M$, i.e. by $\Hom_A(M,-)$. This identifies 
	\begin{align*}
		A\otimes_A M^L\cong M^L=\Hom_A(M,A).
	\end{align*} 
	The proof for $M^R $ is similar.
\end{proof}

	\subsection{Trace Maps I}\label{sec:trace-maps-I}
	By \cite[Section 3.4]{nonlinear-trace}, any continuous (that is, colimit-preserving) morphism $M:A\to B$ of dualizable objects in $\mathsf{Mor}_k$ which is furthermore equipped with a continuous right adjoint defines a map
\begin{align*}
	\mathrm H\mathrm H_*(A)\to \mathrm H\mathrm H_*(B)
\end{align*} 
between the Hochschild homologies of $A,B$. We call such a morphism $M$, which is a dualizable $(A,B)$-bimodule, a \textbf{domain wall} between the categories $A \Mod$ and $B\Mod$; i.e., $M$ defines a colimit preserving functor between the categories $A \Mod$ and $B\Mod$ which moreover has a colimit preserving right adjoint. For example, we have a functor
\begin{align*}
	M_{\pt}: A\Mod&\to B\Mod\\
	K&\mapsto K\otimes _A M.
\end{align*} 
The map
\begin{align*}
	M_{S^1}:\mathrm H\mathrm H_*(A)\to \mathrm H\mathrm H_*(B)
\end{align*} 
is defined as follows. (Right) dualizability ensures we have unit and counit maps
\begin{align*}
	\eta:&A\to M\otimes _BM^R\\
	\varepsilon:&M^R\otimes _AM\to B,
\end{align*} 
where $M^R=\Hom_B(M,B)$ is the right dual of $M$. We thus obtain a map of $k$-vector spaces
\begin{align}\label{eq:HH-map}
	\mathrm H\mathrm H_*(A)\to A\underset{A\otimes A^{\text{op}}}{\otimes }M\otimes _BM^R=B\underset{B\otimes B^{\text{op}}}{\otimes }M^R\otimes _AM\to \mathrm H\mathrm H_*(B).
\end{align} 
Moreover, this map admits a canonical $S^1$-equivariant refinement by \cite[Theorem 2.14]{HSS}.

We claim that the trace map studied in \cite{BKR} exactly arises from such a domain wall construction.

\begin{lem}\label{lem:A-module-structure}
	Let $A\in \mathsf{Alg}$ be an associative algebra over $k$, and let $QA\to A$ be a semi-free dg, i.e. cofibrant, resolution of $A$. Denote by $B=\mathbb{L}(A)_n$ its derived $n$-dimensional representation homology, i.e. the left derived functor of $(-)_n$ applied to $A$, and let $M=B^n$. Then $M$ is a $(QA,B )$-bimodule.
\end{lem}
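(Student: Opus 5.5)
We construct the bimodule structure from the universal property defining representation homology in \cite{BKR}. Recall that for a cofibrant (semi-free) dg algebra $R$, the commutative dg algebra $R_n$ is defined as the value at $R$ of the left adjoint to the matrix functor $M_n:\mathsf{CAlg}_k\to\mathsf{Alg}_k$. Thus $\Hom_{\mathsf{CAlg}}(R_n,C)\cong \Hom_{\mathsf{Alg}}(R,M_n(C))$. By definition, $B=\mathbb{L}(A)_n$ is computed as $(QA)_n$. Applying the adjunction with $R=QA$ and $C=B=(QA)_n$, the identity map $\id_B$ corresponds to a unit morphism of dg algebras
\begin{align*}
	\pi: QA\to M_n(B)=M_n((QA)_n).
\end{align*}
Concretely, on a generator $x$ of the semi-free algebra $QA$, we have $\pi(x)=(x_{ij})_{i,j}$, where the $x_{ij}$ are the generators of $(QA)_n$.

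Next, we use that $M_n(B)\cong \End_B(B^n)$ as dg algebras, where $B^n$ is the free right (equivalently left, since $B$ is commutative) $B$-module of rank $n$, and matrices act on column vectors. Composing with $\pi$ gives a left action of $QA$ on $M=B^n$ by $B$-linear endomorphisms. Since $\pi$ is a map of dg algebras, this action is associative, unital, and compatible with differentials. Because the image of $\pi$ consists of $B$-linear maps, the left $QA$-action commutes with the right $B$-action, so $M$ is a $(QA,B)$-bimodule, that is, a left $QA\otimes_k B^{\text{op}}$-module.

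The only point requiring care is the interpretation in the $\infty$-categorical setting: we want this to be a genuine bimodule, not just one defined up to homotopy. Working with the strict model is what makes this possible. $QA$ is cofibrant, so $(QA)_n$ is an honest commutative dg algebra computing $\mathbb{L}(A)_n$, and $\pi$ is a strict dg algebra map. The strict bimodule then determines an object of the $\infty$-category of $(QA,B)$-bimodules, which also supplies the needed 1-morphism in $\mathsf{Mor}_k$. As a remark, $M$ is visibly perfect (free of rank $n$) as a right $B$-module, which is what Proposition \ref{prop:dual} will later require. I expect no real obstacle here; the main subtlety is only the sign conventions for the matrix action in the graded setting, which follow from $M_n(B)=M_n(k)\otimes_k B$.
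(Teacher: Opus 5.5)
Your proposal is correct and follows the paper's own argument: take the unit $QA\to M_n((QA)_n)=M_n(B)\cong\End_B(B^n)$ of the adjunction $((-)_n,M_n)$ and read it as a left $QA$-action on $B^n$ commuting with the right $B$-action. Your additional remarks on strictness of the model and the explicit form of the unit on generators are harmless elaborations of the same one-line proof.
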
 
\begin{proof}
	The unit of the adjunction $((-)_n,M_n)$ is a map of algebras $QA\to M_n(B) = \End_B(B^n)$, which is a $(QA,B)$-bimodule structure on $B^n$.
\end{proof} 
\begin{rmk}
	 Since the quasi-isomorphism $QA\to A$ induces a Morita equivalence, we see that $M$ has an $(A,B)$-bimodule structure.
\end{rmk}

Since $M$ is a perfect $B$-module, it is right dualizable and hence defines a domain wall between the B-models associated to $QA$ and $B$. By Proposition \ref{prop:dual} it has a right dual given by $M^R=\Hom_B(M,B)$, and
\begin{align*}
	M\otimes _BM^R\cong\End_B(M)\cong M_n(B).
\end{align*}
\begin{prop}\label{prop:domain-wall-map-is-counit}
	In the setting of Lemma \ref{lem:A-module-structure}, the map
\begin{align*}
	QA \otimes _{QA\otimes QA^{\text{op}}}QA\to QA\otimes _{QA\otimes QA^{\text{op}}}M\otimes _BM^R
\end{align*}
induced by the domain wall given by $M$ is exactly $\id_{QA}\otimes \eta$, where $\eta$ is the unit of the adjunction $((-)_n,M_n)$. 
\end{prop}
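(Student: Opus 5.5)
The map in question is the first arrow of \eqref{eq:HH-map} with $A$ replaced by $QA$. It is obtained by applying $QA\otimes_{QA\otimes QA^{\text{op}}}(-)$ to the unit $\eta_M\colon QA\to M\otimes_B M^R$ of the duality data exhibiting $M^R$ as right dual to $M$ in $\mathsf{Mor}_k$. So the statement reduces to an identification of $(QA,QA)$-bimodule maps. We must show that $\eta_M$ agrees with the algebra map $\eta\colon QA\to M_n(B)$ of Lemma \ref{lem:A-module-structure}, under the identification $M\otimes_BM^R\cong\End_B(M)\cong M_n(B)$ recorded just before the statement. Once this is done, functoriality of $QA\otimes_{QA\otimes QA^{\text{op}}}(-)$ gives the claim.

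First I make the duality data explicit. By Proposition \ref{prop:dual}, $M=B^n$ is perfect (indeed free) as a right $B$-module with $M^R=\Hom_B(M,B)\cong (B^n)^\vee$. By \cite[Propositions 4.6.2.1, 4.6.2.13]{HA}, the unit of the bimodule duality is induced from the unit of the duality of $M$ over $B$, namely the coevaluation $k\to M\otimes_BM^R\cong\End_B(M)$ picking out $\id_M$. This coevaluation is then made $QA$-linear on both sides using the left $QA$-action on $M$. Concretely, $\eta_M$ is the unique $(QA,QA)$-bimodule map $QA\to\End_B(M)$ sending $1\mapsto\id_M$. Here $QA$ acts on $\End_B(M)$ by pre- and post-composition through the action map $\rho\colon QA\to\End_B(M)$.

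Next I compare this with $\eta$. A $(QA,QA)$-bimodule map out of $QA$ is determined, as a map of bimodules in the $\infty$-categorical sense, by the image of $1$, since $QA$ is the free bimodule-with-unit, i.e. $\Hom_{QA\otimes QA^{\text{op}}}(QA,N)$ is the space of ``central'' elements. Since $QA$ is cofibrant, I can work strictly at the dg level. By construction in Lemma \ref{lem:A-module-structure}, the action $\rho$ is exactly the adjunction unit $\eta\colon QA\to M_n(B)$. Hence $\eta_M(a)=a\cdot\id_M=\rho(a)=\eta(a)$. So $\eta_M=\eta$ as bimodule maps, and applying $\id_{QA}\otimes_{QA\otimes QA^{\text{op}}}(-)$ gives the result.

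The main obstacle is the middle step: justifying that the abstract unit supplied by \cite{HA} (via the $(\infty,2)$-categorical duality in $\mathsf{Mor}_k$) really is the map $a\mapsto\rho(a)$ under $M\otimes_BM^R\cong\End_B(M)$, rather than merely some equivalent map. I would handle this by using the explicit description of the unit in \cite[Proposition 4.6.2.1]{HA} as the composite of the module-level coevaluation with the action. I would then check the identification strictly on the semi-free model, where all tensor products are underived and $M^R=\Hom_B(B^n,B)$ is computed naively.
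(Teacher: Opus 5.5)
Your proposal is correct and follows the same route as the paper: identify the duality unit $QA\to M\otimes_BM^R\cong\End_B(M)$ with the action map $\rho$ defining the left $QA$-module structure, note that $\rho$ is the adjunction unit $\eta$ by the construction in Lemma \ref{lem:A-module-structure}, and then apply $QA\otimes_{QA\otimes QA^{\text{op}}}(-)$. One small correction: the derived mapping space $\Hom_{QA\otimes QA^{\text{op}}}(QA,N)$ is computed by Hochschild cochains $C^*(QA,N)$, not by ``central elements,'' so a bimodule map out of $QA$ is not determined by the image of $1$ in the $\infty$-categorical sense. You do not need that uniqueness, though: your final paragraph identifies the unit directly with the strict map $a\mapsto\rho(a)$, which is already more justification than the paper gives.
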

\begin{proof}
	$M$ is right dualizable with right dual $M^R$. Thus, $M^R$, viewed as a 1-morphism in the Morita 2-category, has unit given by
	\begin{align*}
		QA\to M\otimes _BM^R,
	\end{align*} 
	and this is the unit of the adjunction
	\begin{align*}
		-\otimes _{QA}M\dashv-\otimes _BM^R.
	\end{align*} 
	Under the isomorphism
	\begin{align*}
		M\otimes _BM^R\cong \End_B(M),
	\end{align*} 
	this unit is the map
	\begin{align*}
		QA\to \End_B(M)
	\end{align*} 
	which defines the left $QA$-module structure on $\End_B(M)$. On the other hand, the left $QA$-action on
	\begin{align*}
		\End_B(M)\cong M_n((QA)_n)
	\end{align*} 
	is induced by the unit of the adjunction $((-)_n,M_n)$,
	\begin{align*}
		\eta_{QA}:QA\to M_n((QA)_n)\cong\End_{B}(M).
	\end{align*} 
	Thus, we see that the units of the two adjunctions in question are the same, so the map in the statement of the Proposition is $\id_{QA}\otimes \eta_{QA}$.
\end{proof}

Let $B\in\mathsf{CAlg}$ be a commutative algebra, and let $\Map(S^1,\Spec B)\eqcolon \mathscr{L}\Spec B$ be the loop space of $X=\Spec B$. There is a canonical inclusion map $X\to \mathscr{L}X$ given by constant loops, which corresponds to a map of algebras
\begin{align}\label{eq:const-loops}
	\mathrm H\mathrm H_*(B)=\mathscr{O}(\mathscr{L}\Spec (B))\to B.
\end{align} 
Further, this map is equivariant with respect to the $S^1$-actions on $\mathscr{L}X$ and $X$, where $S^1$ acts on $X$ trivially.
\begin{prop}\label{prop:natural-transf-agree-on-truncations}
	Let $\mathsf{Alg}^{\le 0}\subset \mathsf{Alg}$ denote the full subcategory of connective associative algebras. Consider the functors
	\begin{align*}
		\mathrm {HC}_*,\mathbb{L}(-)_n: \mathsf{Alg}^{\le 0}\to \mathsf{Vect},
	\end{align*} 
	where in this case $\mathbb{L}(A)_n$ denotes the underlying vector space of the connective commutative algebra. Suppose given natural transformations
	\begin{align*}
		\alpha,\beta:\mathrm{HC}_*\Rightarrow  \mathbb{L}(-)_n
	\end{align*} 
	such that for every finite-dimensional vector space $V$ with tensor algebra $\mathscr{T}(V)\coloneqq \bigoplus_n V^{\otimes n}\in \mathsf{Alg}^{\le 0}$, the maps
	\begin{align*}
		\mathrm{H}^0(\mathrm{HC}_*(\mathscr{T}(V))\to (\mathscr{T}(V))_n
	\end{align*} 
	induced by $\alpha_{\mathscr{T}(V)}$ and $\beta_{\mathscr{T}(V)}$ agree. Then 
	\begin{align*}
		\alpha\cong\beta.
	\end{align*} 
\end{prop}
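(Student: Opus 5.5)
The plan is to reduce to free algebras by left Kan extension, and then to check equality on free algebras at the level of $\mathrm{H}^0$.

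\textbf{Step 1 (both functors are left Kan extended from tensor algebras).} Every connective associative algebra $A$ has a semi-free resolution $QA\to A$. Such a $QA$ is a sifted (geometric realization/filtered) colimit of tensor algebras $\mathscr{T}(V)$ on finite-dimensional $V$. So $\mathsf{Alg}^{\le 0}$ is generated under sifted colimits by the full subcategory $\mathcal{T}$ of tensor algebras $\mathscr{T}(V)$. Concretely, $\mathsf{Alg}^{\le 0}\simeq \mathcal{P}_\Sigma(\mathcal{T})$ in the sense of \cite{HTT}.

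I would then check that both functors preserve sifted colimits.
\begin{itemize}
\item For $\mathbb{L}(-)_n$, this holds by construction: it is a left derived left adjoint, so it preserves all colimits into $\mathsf{CAlg}^{\le 0}$. Moreover, the forgetful functor $\mathsf{CAlg}^{\le 0}\to \mathsf{Vect}$ preserves sifted colimits.
\item For $\mathrm{HC}_*$, cyclic homology is the $S^1$-coinvariants of $A\otimes_{A\otimes A^{\text{op}}}A$. That expression is computed by the cyclic bar construction, which commutes with sifted colimits in $A$. Coinvariants commute with all colimits.
\end{itemize}
Hence both functors are the left Kan extensions of their restrictions to $\mathcal{T}$. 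Consequently, the natural transformations $\alpha,\beta$ are determined by their restrictions $\alpha|_{\mathcal{T}},\beta|_{\mathcal{T}}$, since $\mathrm{Nat}(\mathrm{Lan}F,G)\simeq \mathrm{Nat}(F,G|_{\mathcal{T}})$.

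\textbf{Step 2 (reduce to $\mathrm{H}^0$ on free algebras).} For $\mathscr{T}(V)$ free, $\mathbb{L}(\mathscr{T}(V))_n=(\mathscr{T}(V))_n$ is the polynomial algebra $\Sym(V^*\otimes M_n^*)$ dual-appropriately, i.e. $k[\text{$n\times n$ matrices}]^{\otimes \dim V}$. It is concentrated in degree $0$ because the representation variety of a free algebra is affine space. Thus any map from a connective complex $\mathrm{HC}_*(\mathscr{T}(V))$ into it factors through $\tau^{\ge 0}$, i.e. through $\mathrm{H}^0$ (cohomological grading, connective means degrees $\le 0$). The mapping space $\Map_{\mathsf{Vect}}(\mathrm{HC}_*(\mathscr{T}(V)),(\mathscr{T}(V))_n)$ therefore has $\pi_0$ equal to $\Hom(\mathrm{H}^0\mathrm{HC}_*(\mathscr{T}(V)),(\mathscr{T}(V))_n)$ and discrete higher structure in the relevant range. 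So the hypothesis gives $\alpha_{\mathscr{T}(V)}\simeq\beta_{\mathscr{T}(V)}$ objectwise.

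\textbf{Step 3 (coherence; main obstacle).} Agreement objectwise does not automatically give agreement as natural transformations of $\infty$-functors. I would handle this by observing that the target restricted to $\mathcal{T}$ lands in the heart, i.e. in discrete objects. Then $\mathrm{Nat}(\mathrm{HC}_*|_{\mathcal{T}},\mathbb{L}(-)_n|_{\mathcal{T}})$ equals $\mathrm{Nat}(\mathrm{H}^0\mathrm{HC}_*|_{\mathcal{T}},(-)_n|_{\mathcal{T}})$, computed in ordinary $1$-categories of functors $\mathrm{h}\mathcal{T}\to \mathsf{Vect}^\heartsuit$. This is a set, namely a discrete space, because the $\infty$-categorical mapping spaces of functors into a $1$-category are discrete (natural transformations into a $0$-truncated target). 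Natural transformations between $1$-functors into an ordinary category are equal iff their components agree, so $\alpha|_{\mathcal{T}}=\beta|_{\mathcal{T}}$.

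Combined with Step 1, this gives $\alpha\cong\beta$. The point I expect to need the most care is justifying the identification $\mathrm{Nat}(F,G)\simeq \mathrm{Nat}(\tau F,G)$ for $G$ valued in the heart, via the adjunction between $\tau^{\ge 0}$-truncation and inclusion of coconnective objects applied pointwise. I also need to be precise that $\mathrm{HC}_*$ really preserves sifted colimits, where I would use the cyclic bar complex model.
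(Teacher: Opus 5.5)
Your proposal is correct and follows essentially the same route as the paper. Both arguments use sifted-colimit preservation of $\mathrm{HC}_*$ and $\mathbb{L}(-)_n$ together with $\mathsf{Alg}^{\le 0}\simeq\mathscr{P}_\Sigma$ of tensor algebras, then observe that maps from the connective $\mathrm{HC}_*(\mathscr{T}(V))$ into the discrete $(\mathscr{T}(V))_n$ are determined on $\mathrm{H}^0$, and finish with a discreteness argument for coherence; your Step 3, using the truncation adjunction on functor categories and the fact that functors into $\mathsf{Vect}^{\heartsuit}$ form a $1$-category, is just a more careful packaging of the paper's remark that the space of natural transformations is a limit of discrete mapping spaces.
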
 
\begin{proof}
	Both $\mathrm{HC}_*$ and $\mathbb{L}(-)_n$ preserve sifted colimits. Indeed, Hochschild homology is computed via the geometric realization of the cyclic bar construction, whose composition preserves sifted colimits. Similarly, $S^1$-coinvariants is a colimit; hence, $\mathrm {HC}_*$ preserves sifted colimits. On the other hand, the functor $\mathbb{L}(-)_n$ is a left adjoint, and the forgetful functor $\mathsf{CAlg}^{\le 0}\to \mathsf{Vect}$ also preserves sifted colimits.

	Let $\mathsf{Alg}^{0}\subset \mathsf{Alg}^{\le 0}$ denote the subcategory spanned by algebras of the form $\mathscr{T}(V)$ with $V$ a finite-dimensional vector space concentrated in degree 0. By the remarks after Corollary 7.1.4.17 of \cite{HA}, the category $\mathsf{Alg}^{\le 0}\cong \mathscr{P}_\Sigma(\mathsf{Alg}^0)$, which means that every connective associative algebra can be expressed as a sifted colimit of algebras of the form $\mathscr{T}(V)$. Thus, it is sufficient to show that $\alpha\cong\beta$ on this full subcategory. Since $\mathscr{T}(V)$ is cofibrant for any $V$, we see that
	\begin{align*}
		\mathbb{L}(\mathscr{T}(V))_n=(\mathscr{T}(V))_n.
	\end{align*} 
	Moreover, the underlying vector space of this algebra is in the heart of $\mathsf{Vect}$. On the other hand, $\mathrm{HC}(\mathscr{T}(V))$ is connective, so that we can identify
	\begin{align*}
		\Hom(\mathrm{HC}_*(\mathscr{T}(V)),(\mathscr{T}(V))_n)\cong \Hom^0(\mathrm H^0(\mathrm{HC}_*(\mathscr{T}(V)),(\mathscr{T}(V))_n),
	\end{align*} 
	where the RHS is the inner Hom in $\mathsf{Vect}^{\heartsuit}$; in particular, it is a discrete space. Since we also have
	\begin{align*}
		\mathrm H^0(\mathrm{HC}_*(\mathscr{T}(V))=\mathscr{T}(V) /[\mathscr{T}(V),\mathscr{T}(V)],
	\end{align*} 
	we see that a point of $\Hom(\mathrm{HC}_*(\mathscr{T}(V)), (\mathscr{T}(V))_n)$ is uniquely determined by a map
	\begin{align*}
		\mathscr{T}(V) /[\mathscr{T}(V),\mathscr{T}(V)]\to (\mathscr{T}(V))_n.	
	\end{align*} 
	Now, the space of natural transformations between $\mathrm{HC}_*$ and $\mathbb{L}(-)_n$ is a limit of the mapping spaces $\Hom(\mathrm{HC}_*(\mathscr{T}(V)),\mathscr{T}(W))$ which are all discrete because the source is connective and the target is in the heart. Thus, the limit is 0-truncated, so the above identification of points is sufficient to identify the natural transformations. Since by assumption the maps induced by $\alpha_{\mathscr{T}(V)}$ and $\beta_{\mathscr{T}(V)}$ agree and are of this form, we conclude that
	\begin{align*}
		\alpha\cong\beta.
	\end{align*} 
\end{proof} 
Let $A \in \mathsf{Alg}^{\le 0}$ be a connective associative algebra, and set
\begin{align*}
	B\coloneqq \mathbb{L}(A)_n.
\end{align*} 
Denote the unit of the adjunction $(\mathbb{L}(-)_n,M_n)$ by $\eta:A\to M_n(B)$, and denote $M\coloneqq B^n$ the $(A,B)$-bimodule of Lemma \ref{lem:A-module-structure}. Since $M$ is perfect as a right $B$-module, it is right dualizable and hence induces, by \cite[Theorem 2.14]{HSS}, a canonical $S^1$-equivariant map
\begin{align*}
	\mathrm H\mathrm H_*(A)\to \mathrm H\mathrm H_*(B).
\end{align*} 
Composing this with the canonical map $\mathrm H\mathrm H_*(B)\to B$ given by (\ref{eq:const-loops}), we obtain an $S^1$-equivariant map
\begin{align}\label{eq:trace}
	\mathrm H\mathrm H_*(A)\to \triv(B),
\end{align} 
where $\triv(B)$ denotes the underlying vector space of $B$ equipped with the trivial $S^1$-action.

\begin{thm}\label{thm:trace}
	Under the adjunction 
	\begin{align*}
		(-)_{S^1}: \mathsf{Vect}^{S^1}\to \mathsf{Vect}:\triv,
	\end{align*} 
	the map
	\begin{align*}
		\mathrm{HC}_*(A)\to B
	\end{align*} 
	induced by (\ref{eq:trace}) is the trace map of \cite{BKR}.
\end{thm}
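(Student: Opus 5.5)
We prove this by reduction to free algebras, using Proposition \ref{prop:natural-transf-agree-on-truncations}. Both the map $\alpha\colon \mathrm{HC}_*(A)\to B$ obtained from (\ref{eq:trace}) via the adjunction $(-)_{S^1}\dashv\triv$ and the trace map $\beta$ of \cite{BKR} are natural transformations $\mathrm{HC}_*\Rightarrow\mathbb{L}(-)_n$ of functors $\mathsf{Alg}^{\le 0}\to\mathsf{Vect}$. By that proposition it then suffices to check that $\alpha$ and $\beta$ agree on $\mathrm H^0$ for tensor algebras $\mathscr{T}(V)$. Concretely, we must show that the induced maps
\begin{align*}
	\mathscr{T}(V)/[\mathscr{T}(V),\mathscr{T}(V)]\to (\mathscr{T}(V))_n
\end{align*}
coincide.

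\textbf{Naturality of $\alpha$.} A map of connective algebras $f\colon A\to A'$ induces $\mathbb{L}(f)_n\colon B\to B'$. The bimodules are compatible: $B^n\otimes_B B'\cong B'^n$ as $(A,B')$-bimodules, because the unit $\eta$ is natural. By functoriality of traces for composites of domain walls (\cite{HSS,nonlinear-trace}), the map $\mathrm{HH}_*(A)\to\mathrm{HH}_*(B)\to\mathrm{HH}_*(B')$ agrees with the map induced by $A\to A'$ followed by $M'$. The constant-loops map (\ref{eq:const-loops}) is natural in $B$, and the adjunction is natural as well, so $\alpha$ is natural. For $\beta$, naturality is part of the construction in \cite{BKR}: there the trace is induced by $a\mapsto \tr(\eta(a))$, applied to a cofibrant resolution.

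\textbf{Computing $\alpha$ on $\mathrm H^0$.} Take $A=\mathscr{T}(V)$, which is cofibrant, so $B=(\mathscr{T}(V))_n$ sits in degree $0$. On $\mathrm H^0$, the map (\ref{eq:HH-map}) is
\begin{align*}
	A/[A,A]\to A\otimes_{A\otimes A^{\mathrm{op}}}(M\otimes_B M^R)\to B\otimes_{B\otimes B^{\mathrm{op}}}(M^R\otimes_A M)\to B/[B,B]=B.
\end{align*}
By Proposition \ref{prop:domain-wall-map-is-counit}, the first arrow is $\id\otimes\eta$, sending $a\mapsto \eta(a)\in M_n(B)=M\otimes_B M^R$. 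The middle identification is the cyclic swap $m\otimes\phi\mapsto\phi\otimes m$. The counit $\varepsilon$ evaluates $\phi(m)$. Composing, an elementary matrix $e_{ij}b=m_i\otimes\phi_j b$ goes to $\phi_j(m_i)b=\delta_{ij}b$. So the composite $M_n(B)\to B$ is the matrix trace, and $\alpha$ on $\mathrm H^0$ is $a\mapsto\tr(\eta(a))$.

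\textbf{Remaining checks.} The constant-loops map on $\mathrm H^0$ is the identity $B\to B$, since $B$ is commutative. The $S^1$-adjunction does not change $\mathrm H^0$: $\mathrm H^0(\mathrm{HC}_*)=\mathrm H^0(\mathrm{HH}_*)$. Hence $\alpha$ on $\mathrm H^0$ is exactly $\bar a\mapsto\tr\eta(a)$, which is the formula defining the trace of \cite{BKR} in degree $0$.

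\textbf{Main obstacle.} I expect the hardest step to be the middle identification: checking that the cyclic-invariance isomorphism in (\ref{eq:HH-map}), as set up in \cite{HSS}, really is the naive swap, so that the composite is the trace rather than some other contraction. I would handle this by working at the level of $\mathrm H^0$. There all objects are discrete, the relevant tensor products are ordinary coinvariants, and the computation reduces to the usual fact that the Hochschild trace of the free module $B^n$ is the matrix trace.
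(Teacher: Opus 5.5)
Your proposal is correct and follows the paper's proof essentially step for step. The common steps are: naturality of both transformations, the reduction via Proposition \ref{prop:natural-transf-agree-on-truncations} to $\mathrm{HC}_0$ of tensor algebras $\mathscr{T}(V)$, the explicit computation $\bar a\mapsto \eta(a)\mapsto \sum \eta(a)e_i\otimes e_i^\vee\mapsto \sum e_i^\vee(\eta(a)e_i)=\tr\eta(a)$, and the observation that the constant-loop map is the identity on $\mathrm H^0$. The one point the paper makes explicit that you leave implicit is needed for $\alpha$ and $\beta$ to have literally the same source $\mathrm{HC}_*$: the $S^1$-structure on $\mathrm{HH}_*$ coming from the trace formalism of \cite{HSS} must agree, functorially in $A$, with the one used in \cite{BKR}, for which the paper cites \cite[Proposition 4.24 and Remark 6.12]{HSS}.
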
 
\begin{proof}
	We first claim that the map 
	\begin{align*}
		t_A:\mathrm{HC}_*(A)\to B
	\end{align*} 
	induces a natural transformation
	\begin{align*}
		t:\mathrm{HC}_*\Rightarrow \mathbb{L}(-)_n.
	\end{align*} 
	Indeed, by applying to $\eta$ the functor $\mathsf{Alg}\to \mathsf{Mor}$ which sends maps to base change bimodules, we see that the assignment $A\leadsto (A\xrightarrow{M}\mathbb{L}(A)_n)$ assembles, by functoriality of extension of scalars, into a coherent family of 1-morphisms $A\to M_n(B)$, because in $\mathsf{Mor}$, we have the Morita equivalence $M_n(B)\cong B$ which is natural in $B$ (since $M_n(B)\cong M_n(k)\otimes B)$). Furthermore, by naturality of the  trace functor of \cite[Theorem 2.14]{HSS}, we thus obtain an $S^1$-equivariant natural transformation
	\begin{align*}
		\mathrm H\mathrm H_*\to \mathbb{L}(-)_n,
	\end{align*} 
	which thus induces the natural transformation $t$. The trace map $\theta$ of \cite{BKR} is also a natural transformation by the remarks around Equations 4.3 and 4.4 of \textit{op. cit.}; moreover, these are natural transformations with the same source and target, since by \cite[Proposition 4.24 and Remark 6.12]{HSS}, the $S^1$-structure on the sources are the same, functorially in $A$. Thus, by Proposition \ref{prop:natural-transf-agree-on-truncations}, it suffices to show that the two natural transformations $t$ and $\theta$ agree on zeroth cyclic homology of algebras of the form $A=\mathscr{T}(V)$ with $V$ a finite-dimensional vector space. Letting $B=(\mathscr{T}(V))_n=\mathbb{L}(\mathscr{T}(V))_n$ and $\eta: \mathscr{T}(V)\to M_n(B)$, we have the associated bimodule $M=B^n$. We can now work element by element. Let $e_1,\ldots,e_n$ be the standard basis of $M$ with dual basis $e_1^\vee,\ldots,e_n^\vee$ in $M^R=\Hom_B(M,B)$. The map 
	\begin{align*}
	 \mathscr{T}(V) /[\mathscr{T}(V),\mathscr{T}(V)]\to (\mathscr{T}(V))_n
	\end{align*}
	induced by $M$ is explicitly
	\begin{align*}
		\bar{a}\mapsto \sum e_i^\vee(\eta(a)e_i),
	\end{align*} 
	which is just the ordinary trace
	\begin{align*}
		\sum e_i^\vee(\eta(a)e_i)=\tr(\eta(a)).
	\end{align*} 
	Indeed, we can calculate this at the chain level as follows. We need only identify the map in question on $\mathrm H^0$. There, the first map sends
	\begin{align*}
		A /[A,A]\to &\mathrm H^0(A\otimes _{A\otimes A^{\text{op}}}M_n(B))=M_n(B) /[A,M_n(B)]\\
		\bar{a}\mapsto &\eta(a).
	\end{align*} 
	 This is then sent to
	 \begin{align*}
		 M_n(B) /[A,M_n(B)]\to& \mathrm H^0(A\otimes _{A\otimes A^{\text{op}}}M\otimes _B\Hom_B(M,B))\\
		 \eta(a)\mapsto &\sum\eta(a)e_i\otimes e_i^\vee.
	 \end{align*} 
	 	 Now, under the isomorphism
	 \begin{align*}
	 	A\otimes _{A\otimes A^{\text{op}}}M\otimes _BM^R\cong M^R\otimes _AM\otimes _{B\otimes B}B,
	 \end{align*} 
	 we see that 
	\begin{align*}
		\sum\eta(a)e_i\otimes e_i^\vee\mapsto \sum e_i^\vee\otimes \eta(a)e_i.
			\end{align*} 
	Finally, this is sent via $\varepsilon\otimes \id_B$, where $\varepsilon$ is the counit duality datum for $M$, to the element
	\begin{align*}
		\sum e_i^\vee(\eta(a)e_i),
	\end{align*} 
	as desired.

	Moreover, since $B$ is commutative, $B /[B,B]=B$, so the constant loop map on zeroth Hochschild homology is the identity. Thus, $t_{\mathscr{T}(V)}$ induces the map
	\begin{align*}
		\bar{a}\mapsto \tr(\eta(a)),
	\end{align*} 
	which is precisely the definition of the Berest-Khachatryan-Ramadoss trace, as in e.g. \cite[Section 3]{BR}.
\end{proof}

\section{Extension to Stacks}\label{sec:ext-to-stacks}
	
\subsection{Definition}
Let $ \mathcal{C}$ be a category. Note that $\mathcal{C}$ defines a moduli functor $\mathcal{X}_{\mathcal{C}}$, which is the composition of the functor
\begin{align*}
	\mathsf{Aff}^{\text{op}}\to& \mathsf{Cat}\\
	S\mapsto &\mathcal{C}\otimes \mathsf{QCoh}(S)
\end{align*} 
with the right adjoint to the inclusion $\mathsf{Spc}\hookrightarrow \mathsf{Cat}$. Informally, this functor can be described as sending a map of affines $f:S'\to S$ to 
\begin{align*}
\ywidetilde{\mathcal{C}\otimes \mathsf{QCoh}(S)}\xrightarrow{\id_{\mathcal{C}}\otimes f^*}\ywidetilde{\mathcal{C}\otimes \mathsf{QCoh}(S')},
\end{align*}
where for a category $\mathscr{A}$, $\ywidetilde{\mathscr{A}}$ denotes the maximal subgroupoid of $\mathscr{A}$.
\begin{rmk}
This functor has as a subfunctor the perhaps more familiar moduli functor of pseudo-perfect complexes $\mathscr{M}_{\mathcal{C}}$ of \cite{TV}.
\end{rmk}
\begin{prop}
	If $\mathcal{C}=\mathsf{QCoh}(X)$, the category of quasi-coherent sheaves on a prestack $X$, or $\mathcal{C}=A\Mod$ is modules over an associative algebra $A$, then $\mathcal{X}_{\mathcal{C}}$ is a stack.
\end{prop}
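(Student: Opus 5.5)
We need to show that $\mathcal{X}_{\mathcal{C}}$ satisfies étale (or fpqc) descent. The plan is to first identify the functor $S\mapsto \mathcal{C}\otimes\mathsf{QCoh}(S)$ concretely in both cases, then reduce descent for $\mathcal{X}_{\mathcal{C}}$ to descent for this $\mathsf{Cat}$-valued functor. The reduction uses the fact that the maximal-subgroupoid functor $\ywidetilde{(-)}$ is a right adjoint and hence preserves limits.

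Concretely, for $S=\Spec R$ affine:
\begin{itemize}
\item If $\mathcal{C}=A\Mod$, then $A\Mod\otimes R\Mod\simeq (A\otimes R)\Mod$ by \cite[Theorem 4.8.5.16]{HA}.
\item If $\mathcal{C}=\mathsf{QCoh}(X)$, we write $\mathsf{QCoh}(X)=\lim_{\Spec B\to X}B\Mod$. We then use that $R\Mod$ is dualizable (indeed compactly generated) in $\mathsf{Pr}^{\text{L}}$, so $-\otimes R\Mod$ commutes with limits. This gives
\begin{align*}
	\mathsf{QCoh}(X)\otimes \mathsf{QCoh}(S)\simeq \lim_{\Spec B\to X}(B\otimes R)\Mod\simeq \mathsf{QCoh}(X\times S).
\end{align*}
\end{itemize}
In both cases the functor is $S\mapsto \mathsf{QCoh}(Y\times S)$, with $Y=X$ or with $Y$ a ``noncommutative affine'' $A$. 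Functoriality in $S$ is the pullback $\id\otimes f^*$.

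Next we verify descent. Take an étale (or fpqc) hypercover $S_\bullet\to S$ of affines. In the $A\Mod$ case we need
\begin{align*}
	(A\otimes R)\Mod\simeq \lim_{\Delta}(A\otimes R_\bullet)\Mod.
\end{align*}
Write $(A\otimes R_\bullet)\Mod\simeq A\Mod\otimes R_\bullet\Mod$. By faithfully flat descent for modules \cite[Chapter D]{SAG}, $R\Mod\simeq\lim R_\bullet\Mod$. It then suffices to commute $A\Mod\otimes-$ past this totalization, which again holds because $A\Mod$ is dualizable in $\mathsf{Pr}^{\text{L}}$. In the $\mathsf{QCoh}(X)$ case we commute the two limits, over $\Spec B\to X$ and over $\Delta$, and apply descent for $(B\otimes R)\Mod$ along the faithfully flat cover $B\otimes R\to B\otimes R_\bullet$. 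Equivalently, $\mathsf{QCoh}(X)\otimes-$ preserves the limit by dualizability of $\mathsf{QCoh}(S)$. Finally, applying $\ywidetilde{(-)}$, which is right adjoint to $\mathsf{Spc}\hookrightarrow\mathsf{Cat}$, preserves the totalization. This shows $\mathcal{X}_{\mathcal{C}}$ is a sheaf of spaces.

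The main obstacle is commuting the Lurie tensor product with limits. In general $\mathcal{D}\otimes-$ does not preserve limits in $\mathsf{Pr}^{\text{L}}$. It does when $\mathcal{D}$ is dualizable, since then $\mathcal{D}\otimes-\simeq\mathrm{Fun}^{\text{L}}(\mathcal{D}^\vee,-)$. For $A\Mod$ and $R\Mod$ this holds because they are compactly generated. For a general prestack $X$, $\mathsf{QCoh}(X)$ need not be dualizable, so I would put dualizability on the affine factor $\mathsf{QCoh}(S)=R\Mod$ instead. That choice yields both the identification with $\mathsf{QCoh}(X\times S)$ and the descent statement.
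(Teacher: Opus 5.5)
Your proposal is correct and follows essentially the same route as the paper: for $A\Mod$ the paper also combines faithfully flat descent for $R\Mod$ with the fact that $A\Mod\otimes-\simeq\mathsf{Map}_{\mathsf{Pr}^{\text{L}}}(A^{\text{op}}\Mod,-)$ commutes with limits, and then passes to maximal subgroupoids via the limit-preserving right adjoint $\ywidetilde{(-)}$. For $\mathsf{QCoh}(X)$ the paper cites \cite[Chapter 3 Proposition 3.5.3]{GR} for the identification $\mathsf{QCoh}(X)\otimes\mathsf{QCoh}(S)\simeq\mathsf{QCoh}(X\times S)$ and invokes descent of $\mathsf{QCoh}$ along the \v{C}ech nerve of $X\times S'\to X\times S$; your dualizability argument for the identification and your exchange of the limit over $\Spec B\to X$ with the totalization spell out exactly those two steps.
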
 
\begin{proof}
Let $A$ be an associative algebra and $\mathcal{C}=A\Mod$. Given a faithfully flat map of commutative algebras $B\to C$, we can form its Amitsur complex $(C /B)_\bullet$, which is by definition the underlying cosimplicial algebra of the \v{C}ech nerve of $\Spec C \to \Spec B$. To prove descent for $\mathcal{X}_{A\Mod}$, it thus suffices to provide an equivalence
\begin{align*}
	\lim(A\Mod\otimes (C/B)_\bullet\Mod)\cong A\Mod \otimes (\lim(C /B)_\bullet)\Mod.
\end{align*} 
By faithfully flat descent, we have
\begin{align*}
	(\lim(C /B)_\bullet)\Mod\cong \lim((C /B)_\bullet\Mod).
\end{align*} 
Hence, it suffices to show that the Lurie tensor product with $A\Mod$ commutes with this limit. By construction, this tensor can be identified with colimit preserving functors between stable categories, since $A\Mod$ is dualizable in $\mathsf{Pr}^\text{L}$ (with dual $A\Mod^\vee=A^{\text{op}}\Mod$):
\begin{align*}
	A\Mod\otimes (\lim(C /B)_\bullet)\Mod\cong \mathsf{Map}_{\mathsf{Pr}^\text{L}}((A\Mod)^\vee,(\lim(C /B)_\bullet)\Mod).
\end{align*} 
Since the $\mathsf{Map}$ functor commutes with limits in the second variable, we have the result.

If $X$ is a prestack, then for $\mathcal{C}=\mathsf{QCoh}(X)$, $\mathcal{X}_{\mathcal{C}}$ is given by
\begin{align*}
	S\mapsto \ywidetilde{\mathsf{QCoh}(X\times S)},
\end{align*} 
which is the composition of the functor $\ywidetilde{\mathsf{QCoh}(-)}$ with $X\times -$ by \cite[Chapter 3 Proposition 3.5.3]{GR}. Now, given a cover $S'\to S$, $X\times -$ applied to the \v{C}ech nerve of this map is also a \v{C}ech nerve (of the map $X\times S'\to X\times S$). Since the functor $\mathsf{QCoh}(-)$ satisfies descent and passing to the maximal subgroupoid is a right adjoint, the composite $\mathcal{X}_{\mathcal{C}}$ satisfies descent, too.
\end{proof} 
\begin{lem}\label{lem:perf-pushforward}
	Let $\Vect\coloneqq \mathcal{X}_{k\Mod}$ denote the stack of quasi-coherent complexes, so $\Vect(S)=\mathsf{QCoh}(S)$. If $\mathcal{C}=\mathsf{QCoh}(X)$ is the category of quasi-coherent sheaves on a perfect stack or if $\mathcal{C}=A\Mod$ is modules over an associative algebra $A\in \mathsf{Alg}_k$, we have a natural transformation
\begin{align*}
	\mathcal{X}_{\mathcal{C}}\to \Vect.
\end{align*} 
\end{lem}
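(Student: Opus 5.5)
The plan is to build the natural transformation objectwise on affines $S=\Spec B$. For each $S$ I will produce a continuous functor $\mathcal{C}\otimes \mathsf{QCoh}(S)\to \mathsf{QCoh}(S)$, natural in $S$, and then pass to maximal subgroupoids. The natural candidate is $F\otimes \id_{\mathsf{QCoh}(S)}$, where $F:\mathcal{C}\to k\Mod$ is a fixed colimit-preserving functor. Because $\mathsf{QCoh}(S)=k\Mod\otimes \mathsf{QCoh}(S)$, this construction is automatically functorial in $S$. A map $f:S'\to S$ acts by $\id\otimes f^*$, and this commutes with $F\otimes \id$ by bifunctoriality of the Lurie tensor product in $\mathsf{Pr}^\text{L}$. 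Since taking maximal subgroupoids is functorial, we get a map of functors $\mathsf{Aff}^{\text{op}}\to\mathsf{Spc}$, i.e. a map of stacks $\mathcal{X}_{\mathcal{C}}\to\Vect$.

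It remains to choose $F$ in the two cases.

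For $\mathcal{C}=A\Mod$, take $F$ to be restriction of scalars along the unit $k\to A$. This forgetful functor preserves colimits. Tensoring with $B\Mod$ identifies $A\Mod\otimes B\Mod\cong (A\otimes B)\Mod$, and under this identification $F\otimes\id$ is restriction along $B\to A\otimes B$. This is the "pushforward" described in the introduction.

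For $\mathcal{C}=\mathsf{QCoh}(X)$ with $X$ perfect, take $F=p_*$, where $p:X\to \pt$. I will use three results of \cite{BFN} for perfect stacks.
\begin{enumerate}
\item $\mathsf{QCoh}(X)$ is compactly generated by perfect objects, and $\mathscr{O}_X$ is compact. Hence $p_*=\Hom(\mathscr{O}_X,-)$ is continuous.
\item There is an identification $\mathsf{QCoh}(X)\otimes\mathsf{QCoh}(S)\cong \mathsf{QCoh}(X\times S)$.
\item Base change holds for the square formed by $X\times S\to S$ and $X\to\pt$.
\end{enumerate}
Using (2) and (3), $p_*\otimes\id_{\mathsf{QCoh}(S)}$ is identified with $p_{S*}:\mathsf{QCoh}(X\times S)\to\mathsf{QCoh}(S)$. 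Naturality in $S$ then amounts to the base change isomorphism $f^*p_{S*}\cong p_{S'*}(\id_X\times f)^*$. This holds because $p_S$ is the base change of $p$ along $S\to\pt$ and $p_*$ is continuous and $\mathsf{QCoh}(\pt)$-linear.

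The main obstacle is making this naturality fully coherent in the $\infty$-categorical sense, not just objectwise. My approach is to avoid checking base change isomorphisms by hand. Instead I will define the transformation as the composite of the functor $S\mapsto \mathcal{C}\otimes\mathsf{QCoh}(S)$ with the morphism $F\otimes -$ of functors $\mathsf{Pr}^\text{L}_{\mathsf{QCoh}(\pt)}\to\mathsf{Pr}^\text{L}$. The tensor product supplies all the coherences. The only input specific to perfect stacks is the continuity of $p_*$, which is exactly what is needed for $F$ to be a morphism in $\mathsf{Pr}^\text{L}$. The identification with $p_{S*}$ via \cite{BFN} is then needed only to interpret the map, not to construct it.
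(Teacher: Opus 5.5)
Your proposal is correct, but it is organized differently from the paper's proof.

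The paper works objectwise. For $A\Mod$ it checks that restriction commutes with base change via $M\otimes_{A\otimes B}(A\otimes C)\cong M\otimes_B C$. For $\mathsf{QCoh}(X)$ it defines the map as $\mathscr{F}\mapsto p_{S*}\mathscr{F}$ and verifies naturality for each $f:S\to S'$ by base change for the square formed by $X\times S\to X\times S'$ over $S\to S'$, using that $p'$ is a perfect morphism and \cite[Proposition 3.10]{BFN}.

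You instead fix a single continuous functor $F:\mathcal{C}\to k\Mod$ and take the transformation to be $F\otimes\mathsf{QCoh}(-)$. All higher coherences are then supplied by the bifunctoriality of the tensor product. Base change becomes an output rather than an input: it is needed only to recognize $F\otimes\id$ as $p_{S*}$ (resp.\ as restriction along $B\to A\otimes B$). The perfect-stack hypothesis enters only through compactness of $\mathscr{O}_X$.

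Your route buys three things. It gives an honestly coherent natural transformation, whereas the paper only checks that individual squares commute and leaves the higher coherences implicit. It treats both cases uniformly and makes the dependence on the choice of $F$ transparent. It is also slightly more general: it only needs $\mathscr{O}_X$ compact, since $\mathsf{QCoh}(X\times S)\simeq\mathsf{QCoh}(X)\otimes\mathsf{QCoh}(S)$ holds for affine $S$ and any prestack $X$. The paper's route buys directness, since the map is by definition pushforward/restriction and no identification step is needed.

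One point to state precisely: the tensor product here is the one over $k\Mod$, so $F$ must be a $k\Mod$-linear continuous functor, not merely a morphism in $\mathsf{Pr}^{\text{L}}$. For $p_*$ this follows from continuity. The projection maps $p_*\mathscr{F}\otimes V\to p_*(\mathscr{F}\otimes p^*V)$ are equivalences for perfect $V$, hence for all $V$ since both sides commute with colimits in $V$. With this in hand, applying the 2-functor $-\otimes\mathsf{QCoh}(S)$ to the adjunction $p^*\dashv p_*$ gives $p_*\otimes\id\simeq p_{S*}$ directly, with no separate base change theorem needed.
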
 
\begin{rmk}
	In the case of $\mathcal{C}=A\Mod$, this natural transformation \textit{depends on the presentation} of $\mathcal{C}$ as a module category. When we write $\mathcal{C}=A\Mod$, we always implicitly choose such a presentation.
\end{rmk} 
\begin{proof}
Let $\mathcal{C}=A\Mod$ be modules over an associative algebra. We claim restriction of modules defines a natural transformation
\begin{align*}
	\mathcal{X}_{\mathcal{C}}\to \Vect.
\end{align*} 
Given a map $B\to C$ of algebras, we need to show that the following diagram commutes, where the horizontal arrows are restriction and the vertical arrows base change.
\[
\begin{tikzcd}
	(A\otimes B)\Mod\arrow[r]\arrow[d]&B\Mod\arrow[d]\\
	(A\otimes C)\Mod\arrow[r]&C\Mod
\end{tikzcd}.
\] 
Indeed, this follows from the isomorphism
\begin{align*}
	M\otimes _{A\otimes B}A\otimes C\cong M\otimes _BC.
\end{align*} 

Now, let $\mathcal{C}=\mathsf{QCoh}(X)$ for $X$ a perfect stack. Consider the assignment
\begin{align*}
	\mathcal{X}_{\mathcal{C}}(S)=\mathsf{QCoh}(X\times S)\to& \Vect(S)=\mathsf{QCoh}(S)\\
	\mathscr{F}\mapsto &p_{*}\mathscr{F},
\end{align*} 
where $p:X\times S\to S$ is the projection. We claim that the assignment $\mathscr{F}\mapsto p_*\mathscr{F}$ is natural in $S$. Explicitly, given $f:S\to S'$ a map of affines, we need to check that the following square commutes
\[
\begin{tikzcd}
	\mathcal{X}_{\mathcal{C}}(S')\arrow[r]\arrow[d]&\mathcal{X}_{\mathcal{C}}(S)\arrow[d]\\
	\mathsf{QCoh}(S')\arrow[r]&\mathsf{QCoh}(S)
\end{tikzcd}.
\] 
This is equivalent to checking base change for the square
\[
\begin{tikzcd}
	X\times S\arrow[r,"\id_X\times f"]\arrow[d,"p"]&X\times S'\arrow[d,"p'"]\\
	S\arrow[r,"f"]&S'
\end{tikzcd}.
\] 
Since $S'$ and $X\times S'$ are perfect, so is the map $p'$. Hence, base change holds by \cite[Proposition 3.10]{BFN}. 
\end{proof}

\begin{rmk}\label{rmk:map-on-perfect-stacks}
	If $X$ is a qcqs scheme, then,  as in \cite[\S8.3]{T06}, we can present
	\begin{align*}
		\mathcal{X}_{\mathsf{QCoh}(X)}(S) \cong\ywidetilde{\mathsf{QCoh}(X\times S)}\cong\Map(\mathsf{QCoh}(X),\mathsf{QCoh}(S)),
	\end{align*} 
	and a sheaf $\mathscr{E}\in \mathsf{QCoh}(X\times S)$ corresponds to the map
	\begin{align*}
		\varphi_\mathscr{E}:\mathscr{F}\mapsto p_{S*}(p_X^*\mathscr{F}\otimes \mathscr{E}).
	\end{align*} 
	In particular, by applying $\varphi_\mathscr{E}$ to the structure sheaf $\mathscr{O}_X$, we recover the map defined in the lemma above. In fact, by \cite[Theorem 4.14]{BFN}, this is the case for any perfect stack $X$.
\end{rmk}

We henceforth only consider categories $\mathcal{C}$ of the type considered in Lemma \ref{lem:perf-pushforward}. For a category $\mathcal{C}$ of the form $\mathsf{QCoh}(X)$ with $X$ a perfect stack or $A\Mod$ with $A$ an associative algebra, we define its \textbf{${n}$-dimensional representation moduli} as
\begin{align*}
	\mathscr{Rep}_n(\mathcal{C})\coloneqq \mathcal{X}_{\mathcal{C}}\times _{\Vect}\{k^n\} ,
\end{align*} 
where the map $\mathcal{X}_{\mathcal{C}}\to \Vect$ is the map of Lemma \ref{lem:perf-pushforward} and $\pt\to \Vect$ classifies $k^n$.
\begin{rmk}
	This definition depends not only on the abstract category $\mathcal{C}$, but also on the map to $\Vect$ defined either by Lemma \ref{lem:perf-pushforward} or by the presentation $\mathcal{C}=A\Mod$ (with its natural restriction functor to $\Vect$). In particular, this definition is \textit{not} Morita invariant.
\end{rmk}

\begin{thm}\label{thm:rep-on-affines}
	If $\mathcal{C}=A\Mod$ with $A$ an associative algebra, then $\mathscr{Rep}_n(A)\cong\Rep_n(A)$ is the derived representation variety of $A$.
\end{thm}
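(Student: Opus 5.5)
We prove the equivalence by computing the functor of points of $\mathscr{Rep}_n(A)$ on a derived affine $S=\Spec B$, with $B\in\mathsf{CAlg}^{\le 0}$, and then comparing it with the functor of points of $\Rep_n(A)$ from \cite{BKR}. The latter sends $B$ to $\Map_{\mathsf{Alg}_k}(A,M_n(B))$, which is the derived mapping space obtained from the left derived functor $\mathbb{L}(-)_n$ of $(-)_n\dashv M_n$. Since both sides are stacks (by the Proposition above), it suffices to produce an equivalence of space-valued functors on $\mathsf{Aff}^{\text{op}}$ that is natural in $B$.

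First we identify $\mathcal{X}_{A\Mod}(S)$. We have $A\Mod\otimes B\Mod\simeq (A\otimes B)\Mod$, so $\mathcal{X}_{A\Mod}(S)$ is the maximal subgroupoid of $(A\otimes B)\Mod$. By Lemma \ref{lem:perf-pushforward}, the map to $\Vect(S)$ is restriction to $B\Mod$. The fiber product $\mathcal{X}_{A\Mod}\times_{\Vect}\{k^n\}$ evaluated at $S$ is therefore the space of pairs $(N,\phi)$, where $N$ is an $(A\otimes B)$-module and $\phi:N|_B\xrightarrow{\sim}B^n$ is an equivalence of $B$-modules. Here $\{k^n\}(S)=\pt$ maps to $B^n=k^n\otimes B$, and we use that the maximal subgroupoid of a fiber product of categories is the fiber product of groupoids.

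The key step is to identify this space with $\Map_{\mathsf{Alg}}(A,M_n(B))$. An $(A\otimes B)$-module structure on a fixed $B$-module $P$ is the same as a left $A$-module structure in $B\Mod$, that is, an $\mathbb{E}_1$-map $A\to\End_B(P)$. This follows from \cite[\S4.7]{HA}, using that $B\Mod$ is enriched over itself and that $\End_B(P)$ is the endomorphism algebra object. Consequently, the fiber of the forgetful map $\ywidetilde{(A\otimes B)\Mod}\to\ywidetilde{B\Mod}$ over the point $B^n$ is $\Map_{\mathsf{Alg}_k}(A,\End_B(B^n))=\Map_{\mathsf{Alg}_k}(A,M_n(B))$. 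We would need to check carefully that the fiber of the forgetful functor (taken with the path $\phi$) is exactly this mapping space and not, say, its quotient by $\mathrm{Aut}(B^n)$. The fiber product with the point $k^n$ rigidifies away the $\GL_n(B)$ ambiguity, and this is the intended framing. Naturality in $B$ follows from the commutative square in the proof of Lemma \ref{lem:perf-pushforward}, because base change along $B\to C$ sends $\End_B(B^n)$ to $\End_C(C^n)$ compatibly with $M_n(B)\to M_n(C)$.

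Finally, we show that $B\mapsto\Map(A,M_n(B))$ is corepresented by $\mathbb{L}(A)_n$. We replace $A$ by a cofibrant $QA$, which does not change $\Map_{\mathsf{Alg}}(A,-)$ or $A\Mod$. The derived adjunction $\mathbb{L}(-)_n\dashv M_n$ of \cite{BKR} between $\mathsf{Alg}_k$ and $\mathsf{CAlg}_k$ then gives $\Map_{\mathsf{Alg}}(QA,M_n(B))\simeq\Map_{\mathsf{CAlg}}(\mathbb{L}(A)_n,B)$. This is the functor of points of $\Rep_n(A)=\Spec\mathbb{L}(A)_n$ restricted to connective $B$. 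If $A$ is not connective, $\mathbb{L}(A)_n$ need not be connective, and we interpret $\Rep_n(A)$ as the corresponding (co)representable prestack. We expect the main obstacle to be the middle step: making precise, at the $\infty$-categorical level, that module structures over a fixed underlying object are classified by algebra maps into the endomorphism algebra, functorially in $B$.
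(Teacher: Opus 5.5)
Your proposal is correct and follows essentially the same route as the paper. You compute $S$-points of the fiber product as pairs $(N,\phi)$, then use the endomorphism-object classification of \cite[\S4.7.1]{HA} to identify the fiber over $B^n$ with $\Map_{\mathsf{Alg}}(A,M_n(B))$. The paper phrases this with $A\otimes B\in\mathsf{Alg}(B\Mod)$ plus the extension-of-scalars adjunction, which is what your ``algebra map $A\to\End_B(P)$'' amounts to. You then check naturality under base change $B\to C$ and use cofibrant replacement to match with $\Rep_n(A)$. Your only additions are spelling out the corepresentability by $\mathbb{L}(A)_n$ (which the paper takes as the definition of $\Rep_n(A)$) and the caveat about non-connective $A$.
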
 
\begin{proof}
	Let $QA\xrightarrow{\sim }A$ be a cofibrant replacement of $A$ in associative algebras. Since $A\Mod\cong QA\Mod$, we also have
	\begin{align*}
		\ywidetilde{(A\otimes B)\Mod}\cong\ywidetilde{(QA\otimes B)\Mod}.	
	\end{align*} 
	Moreover, this equivalence respects pushforward of modules \cite[Theorem 3.3.1]{Hin97}; thus, we are free to assume $A=QA$ is cofibrant.

	In \cite[\S4.7.1]{HA}, Lurie shows that given a category $\mathcal{C}$ and an object $M\in \mathcal{C}$, there is an equivalence
	\begin{align*}
		\mathsf{Map}_{\mathsf{Alg}(\mathcal{C})}(A,\End(M))\cong \mathsf{LMod}_A(\mathcal{C})\times _{\mathcal{C}} \{M\} 
	\end{align*} 
	for any $A\in \mathsf{Alg}(\mathcal{C})$, where $\End(M)$ is the ``universal endomorphism algebra of $M$.'' By construction, this is natural with respect to colimit preserving functors $F: \mathcal{C}\to \mathscr{D}$. Now, consider the categories 
	\begin{align*}
		\mathcal{C}=&\ywidetilde{ B\Mod}\\
		\mathscr{D}=&\ywidetilde{C\Mod}	,
	\end{align*} 
	for $B,C$ $k$-algebras,	the object $M=B^n$, the algebra $A\otimes B\in \mathsf{Alg}(\mathcal{C})=\mathsf{Alg}_{B /}$, and the functor $F=f^*: \mathcal{C}\to \mathscr{D}$ for a map of algebras $f:B\to C$. By construction, the universal endomorphism algebra is just usual endomorphisms in this case. Thus, Lurie's equivalence becomes an  identification
	\begin{align*}
		\Hom_{\mathsf{Alg}}(A,M_n(B))\cong \mathsf{Map}_{\mathsf{Alg}(\mathcal{C})}(A\otimes B,\End(B^n))\cong \mathcal{X}_{A\Mod}(\Spec B)\times _{\Vect(\Spec B)}\{B^n\} ,
	\end{align*} 
	where the LHS of his equivalence is identified with $\Rep_n(A)(\Spec B)$ by the tensor-hom adjunction. It remains to check that under tensor-hom, the functor $f^*$ on mapping spaces identifies with the map 
	\begin{align*}
		M_n(f):\Hom_{\mathsf{Alg}}(A,M_n(B))\to \Hom_{\mathsf{Alg}}(A,M_n(C)).
	\end{align*} 
	In other words, we just need to check that tensor-hom is natural with respect to maps $f:B\to C$. This would provide an identification of the $S$-points of both $\Rep_n(A)$ and $\mathscr{Rep}_n(\mathcal{X})$, together with naturality in the test affine $S$. Indeed, given
	\begin{align*}
		\varphi:A\otimes B\to M_n(B),
	\end{align*} 
	$f^*\varphi$ is sent, under tensor-hom, to $f^*\varphi(-\otimes 1_C)$. On the other hand, the canonical isomorphism $M_n(B)\otimes _BC=M_n(C)$ is obtained by sending $M\otimes c\mapsto (f(M_{ij})c)$, where $M \in M_n(B)$ is a matrix and $M_{ij}\in B$ are the entries. Thus, under tensor-hom, $f^*\varphi(M)$ is sent to the matrix $(f(M_{ij})1_C)=(f(M_{ij}))=M_n(f)(M)$.
\end{proof}

\subsection{Examples}

\begin{ex}\label{ex:P^1-example}
	Let $\mathcal{C}=\mathsf{QCoh}(\mathbb{P} ^1)$. Then points of $\mathscr{Rep}_n(\mathcal{C})(k)$ are pairs of sheaves $\mathscr{F}\in \mathsf{QCoh}(X)$ and isomorphisms
	\begin{align*}
		\varphi:\Gamma(\mathbb{P} ^1,\mathscr{F})\xrightarrow{\cong}k^n.
	\end{align*} 
	If $\mathscr{F}$ is coherent, then we can write 
	\begin{align*}
		\mathscr{F}=\bigoplus_i \mathscr{O}(d_i)\oplus \bigoplus _{j}\mathscr{O}(e_j)[1]\oplus T \oplus \bigoplus _{k} \mathscr{O}(-1)[m_k],
	\end{align*} 
	where $d_i\ge 0$, $e_j\le -2$, $T$ is a torsion sheaf of length $\ell$, $\sum(d_i+1) + \sum(-e_j-1)+\ell=n$, and the $m_k$ are arbitrary. For families, points of $\mathscr{Rep}_n(\mathcal{C})(S)$ are pairs of a sheaf $\mathscr{F}\in \mathsf{QCoh}(\mathbb{P} ^1_S)$ and an isomorphism
	\begin{align*}
		\varphi:p_{S*}\mathscr{F}\xrightarrow{\cong}\mathscr{O}_S^n,
	\end{align*} 
	where $p_{S}:\mathbb{P} ^1_S\to S$ is the projection. Over geometric points $p$ of $S$ and $\mathscr{F}$ coherent, $\mathscr{F}|_{\mathbb{P} ^1_{S,p}}$ is as before, except the sheaves $\mathscr{O}(d_i),  \mathscr{O}(e_j),  \mathscr{O}(-1)$ are relative to $S$ and $T$ is finite flat of rank $\ell$ over $S$. Note that due to the appearance of arbitrarily many summands of $\mathscr{O}(-1)$ in $\mathscr{F}$, there is no reason to expect $\mathscr{Rep}_n(\mathcal{C})$ to be quasi-compact.
\end{ex} 
\begin{ex}
	In this example, we will assume that $k$ is algebraically closed. Let $\mathcal{C}=\mathsf{QCoh}(BT)$ for $T$ a torus. Then points of $\mathscr{Rep}_n(\mathcal{C})(k)$ are arbitrary $T$-representations $V$ together with isomorphisms $\varphi:V^T\xrightarrow{\cong}k^n$, where $V^T$ denotes $T$-invariants of the $T$-representation $V$. Recall that $\mathsf{QCoh}(BT)$ decomposes as
	\begin{align*}
		\mathsf{QCoh}(BT)=\mathsf{Rep}(BT)\cong \bigoplus _{\lambda\in X^*(T)}\mathsf{Vect}_{\lambda},
	\end{align*} 
	where $X^*(T)$ is the character lattice and $\mathsf{Vect}_{\lambda}$ denotes representations $V$ where $T$ acts by the character $\lambda$. In particular, given a representation $V$,
	\begin{align*}
		V=\bigoplus _{\lambda\in X^*(T)}V_\lambda,
	\end{align*} 
	and $V^T=V_0$ is the 0-weight space. Thus, $k$-points are given by $T$-representations $V$ with the $\lambda=0$ weight having multiplicity $n$ together with a point of $\GL_n(k)$ which identifies $V_0\xrightarrow{\cong} k^n$.

	When we pass to $S$-points, we now have families of $T$-representations $\mathscr{F}\in \mathsf{QCoh}(BT\times S)$ with a trivialization $p_{S*}\mathscr{F}\xrightarrow{\cong}\mathscr{O}^n_S$, where $p_S:BT\times S\to S$, i.e. an element of $\GL_n(S)$. As in the case $S=\pt$, we have a decomposition
	\begin{align*}
		\mathsf{QCoh}(BT\times S)\cong\mathsf{QCoh}(BT)\otimes \mathsf{QCoh}(S)\cong \bigoplus _{\lambda\in X^*(T)}\mathsf{QCoh}(S)_\lambda,
	\end{align*} 
	where $\mathsf{QCoh}(S)_\lambda\coloneqq \mathsf{Vect}_\lambda\otimes \mathsf{QCoh}(S)$ denotes sheaves on $S$ whose underlying vector space is a $T$-reprsentation with weight $\lambda$. Thus, we see that an $S$-point is given by a sheaf $\mathscr{F}\in \mathsf{QCoh}(S)$ with a decomposition
	\begin{align*}
		\mathscr{F}\cong \bigoplus _{\lambda\in X^*(T)}\mathscr{F}_\lambda,
	\end{align*} 
	together with an isomorphism $\mathscr{F}_0\xrightarrow{\cong}\mathscr{O}^n_{S}$, which is an element of $\GL_n(S)$. If we restrict to coherent, rather than quasi-coherent, sheaves (cf. Section \ref{sec:coherent-and-quot-reps}), then we see that the classical truncation of the coherent moduli is given by
	\begin{align*}
		\coprod_{m:X^*(T)\setminus \{0\} \to \mathbb{N} }\prod_{\lambda }B\GL_{m_{\lambda}},
	\end{align*} 
	where $m_\lambda=\rk(\mathscr{F}_\lambda)$ and $m$ has finite support.
\end{ex} 
\begin{ex}
As in the previous example, we still assume that $k$ is algebraically closed. Let $\mathcal{C}=\mathsf{QCoh}(BG)$ for $G$ a reductive group. If $V\in \mathscr{Rep}_{n}(\mathsf{QCoh}(BG))(k)$, then $(V,\varphi)\in \mathsf{QCoh}(BG)=\mathsf{Rep}(G)$ is a pair of a $G$-representation $V$ equipped with an isomorphism $\varphi:V^G\coloneqq \Hom_G(k,V)\xrightarrow{\cong}k^n$. There is a decomposition
\begin{align*}
	\mathsf{Rep}(G)\cong\prod_{\rho}\mathsf{Vect},
\end{align*} 
where the product is over irreducible representations $\rho$ of $G$, because $k$ is algebraically closed of characteristic 0 and $G$ is reductive. Thus, the representation $V$ decomposes as
\begin{align*}
	V\cong k^n\otimes \rho_{\text{triv}}\oplus \bigoplus _{\rho\neq \rho_{\text{triv}}}M_\rho\otimes \rho,
\end{align*} 
where the first summand is the trivial isotype and the other summands are irreducible representations $\rho$ with multiplicities $M_\rho$.

Passing to $S$-points, as in the case of $BT$, we have families of $G$-representations $\mathscr{F}\in \mathsf{QCoh}(BG\times S)\cong \mathsf{QCoh}(BG)\otimes \mathsf{QCoh}(S)$ together with isomorphisms $\varphi:p_{S*}\mathscr{F}\xrightarrow{\cong}\mathscr{O}^n_{S}$. As in the case of $k$-points, we have an isotypic decomposition
\begin{align*}
	\mathscr{F}\cong \bigoplus _\rho \mathscr{M}_\rho\otimes \rho,
\end{align*} 
where the $\mathscr{M}_\rho\in \mathsf{QCoh}(S)$ and the isomorphism $\varphi$ induces $\mathscr{M}_{\rho_{\text{triv}}}\xrightarrow{\cong}\mathscr{O}^n_S$. If we restrict to coherent sheaves---$G$-equivariant vector bundles on $S$, then the classical truncation of the moduli has a presentation as
\begin{align*}
	\coprod_{m}\prod_{\rho\neq\rho_{\text{triv}}}B\GL_{m_\rho},
\end{align*} 
where the coproduct is over all assignments with finite support of multiplicities to irreducible representations $\rho$.
\end{ex}

\section{Trace Maps II}\label{sec:trace-maps-II}
In Theorem \ref{thm:trace}, we have constructed an $S^1$-equivariant trace map
\begin{align*}
	\mathrm H\mathrm H_*(A) \to \mathbb{L}(A)_n,
\end{align*} 
which corresponds by the universal property of $S^1$-coinvariants to a trace map
\begin{align*}
	\mathrm{HC}_*(A)\to \mathbb{L}(A)_n,
\end{align*} 
arising from a domain wall between the field theories defined by $A\Mod$ and $\mathbb{L}(A)_n\Mod$. We claim that we have such a domain wall for representation moduli defined by the ``universal representation.'' More precisely, given a quasi-projective classical scheme $X$, we will show that there is a Fourier-Mukai integral transform from $\mathsf{QCoh}(X)$ to 
\begin{align*}
	\mathsf{QCoh}(\Quot^{n,\text{fr}}_{\mathscr{O}_X^n}(X)),
\end{align*} 
 where $\Quot^{n,\text{fr}}_{\mathscr{O}_X^n}(X)$ is a derived enhancement of the framed locus of the Quot scheme of points on $X$ relative to $\mathscr{O}_X^n$. This scheme will be constructed as a substack of ``coherent representations:''
 \begin{align*}
 	\mathscr{Rep}_n^{\text{coh}}(X)\coloneqq \Coh_{\text{prop}}(X)\times _{\Vect}\pt \subset \mathscr{Rep}_n(X),
 \end{align*} 
 which is the substack of $\mathscr{Rep}_n(\mathsf{QCoh}(X))$ of coherent sheaves with proper support (as in \cite{PS}). This integral transform will formally induce a map on Hochschild homologies, which, composed with the constant loop map
 \begin{align*}
 \Quot^{n,\text{fr}}_{\mathscr{O}_X^n}(X)\to 	\mathscr{L}\Quot^{n,\text{fr}}_{\mathscr{O}_X^n}(X),
 \end{align*} 
 will give a generalization of the trace map of \cite{BKR} (see Theorem \ref{thm:stack-domain-wall}).
 \subsection{Coherent Representation Moduli and Derived Quot Schemes}\label{sec:coherent-and-quot-reps}
We now recall the definition of derived coherent sheaves relative to a morphism of stacks. The following definitions can be found (with homological, rather than cohomological conventions) at the beginning of \cite[\S2]{PS}; we reproduce them here for the reader's convenience. Recall that for $A\in \mathsf{CAlg}$, an $A$-module $M$ is said to be \textbf{almost perfect} if it is eventually connective and all truncations $\tau^{\ge i}M$ are compact in $A\Mod^{\ge i}$. We denote the full subcategory of $A\Mod$ of almost perfect modules by $\mathsf{APerf}(\Spec A)$. For any prestack $X$, we can define its \textbf{category of almost perfect complexes} 
 \begin{align*}
 	\mathsf{APerf}(X)\coloneqq \lim_{S\to X}\mathsf{APerf}(S).
 \end{align*} 
 Given a morphism $f:X\to S$ in $\mathsf{Stk}$, a quasi-coherent sheaf $\mathscr{F}\in \mathsf{QCoh}(X)$ has \textbf{tor amplitude $ {\ge - n}$ relative to $ {S}$} if for every $\mathscr{G}\in \mathsf{QCoh}(S)^\heartsuit$,
 \begin{align*}
 	\mathrm H^{-i}(\mathscr{F}\otimes f^*\mathscr{G})=0
 \end{align*} 
 for $i\notin[0,n]$. The full subcategory of $\mathsf{APerf}(X)$ spanned by sheaves of tor amplitude $\ge -n$ relative to $S$ is denoted $\mathsf{APerf}^{\ge -n}_S(X)$. Define
 \begin{align*}
 	\mathsf{Coh}_S(X)\coloneqq \mathsf{APerf}^{\ge 0}_S(X)
 \end{align*} 
 as the category of \textbf{flat families of coherent sheaves on $ {X}$ relative to $ {S}$}. By Lemma 2.4 of \textit{op. cit.}, the assignment
 \begin{align*}
	 \mathsf{Aff}^{\text{op}}\to &\mathsf{Spc}\\
	 S\mapsto &\ywidetilde{\mathsf{Coh}_S(X\times S)}
 \end{align*} 
defines a stack $\Coh(X)$ called the \textbf{derived stack of coherent sheaves on $ {X}$}. Now, if $X\to S$ is a map of schemes locally almost of finite type (laft) and $\mathscr{F}\in \mathsf{APerf}(X)$, we say that $\mathscr{F}$ has \textbf{proper support relative to $ {S}$} if there is a closed subscheme $Z\hookrightarrow X$ such that $Z\to S$ is proper and $\mathscr{F}|_{X\setminus Z}\cong 0$. Denote the category of almost perfect complexes with proper support by $\mathsf{APerf}_{\text{prop}}(X)$. For a quasi-projective classical scheme $X$, define the \textbf{stack of coherent sheaves with proper support} as
\begin{align*}
	\Coh_{\text{prop}}(X)\coloneqq \APerf_{\text{prop}}(X)\times _{\APerf(X)}\Coh(X).
\end{align*} 
\begin{rmk}
	In \cite{PS}, this stack is defined as
	\begin{align*}
		\Perf_{\text{prop}}(X)\times _{\Perf(X)}\Coh(X),
	\end{align*} 
	but the map $\Coh(X)\to \Perf(X)$ does not exist unless $X$ is smooth (see \cite[Lemma 2.18 and Corollary 2.19]{PS}). The first author of \textit{op. cit.} has informed me in private correspondence that this fiber product was intended to be as in our definition above.
\end{rmk} 
\begin{ex}
	Let $X=\Spec k[x]/(x^2)$, $k \in \mathsf{QCoh}(X)$, and $\pt = \Spec k$. Then $k$ is almost perfect, as it has bounded, coherent cohomologies. It is also of tor amplitude 0 relative to $\pt$, since if $V\in \mathsf{Vect}^\heartsuit$, then
	\begin{align*}
		k\otimes_{k[x]/(x^2)}(k[x]/(x^2)\otimes V) = V
	\end{align*} 
	is tor amplitude 0. But it is not perfect, since it has infinite projective dimension as a $k[x]/(x^2)$-module. Thus, it is not an object of
	\begin{align*}
		\Perf_{\text{prop}}(X)(\pt)\times _{\APerf(X)(\pt)}\Coh(X)(\pt).
	\end{align*} 
	Since our eventual goal is a definition of the representation moduli and, for an algebra $A$, $k\in \Rep_1(A)(k)$, we see that the definition of the stack of coherent sheaves with proper support given above is more suitable for a definition of representation varieties.
\end{ex}
\begin{prop}\label{prop:coherent-sheaves-with-proper-support-is-1-artin-laft-affine-diagonal}
	Let $X$ be a classical quasi-projective scheme. Then the stack
	\begin{align*}
		\Coh_{\text{prop}}(X)
	\end{align*} 
	is a 1-Artin stack locally almost of finite type with affine diagonal.
\end{prop}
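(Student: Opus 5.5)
The plan is to reduce to the known result for the stack $\Coh(X)$ itself and then show that imposing proper support cuts out an open substack. First, invoke the results of \cite{PS} (Section 2, building on Lurie's representability theorem): for $X$ a classical quasi-projective scheme, hence separated and of finite type over $k$, $\Coh(X)$ is a 1-Artin stack that is locally almost of finite type and has affine diagonal. Since $\Coh_{\text{prop}}(X)$ is defined as the fiber product $\APerf_{\text{prop}}(X)\times_{\APerf(X)}\Coh(X)$, and $\APerf_{\text{prop}}(X)\to\APerf(X)$ is the inclusion of a full subgroupoid on $S$-points, the projection $\Coh_{\text{prop}}(X)\to\Coh(X)$ is a monomorphism. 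It therefore suffices to show that it is an open immersion. All three properties are then inherited: open substacks of 1-Artin laft stacks are 1-Artin laft, and the diagonal of an open substack is the base change of the ambient diagonal, so it is again affine.

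For openness, I would first reduce to classical truncations. Having proper support relative to $S$ depends only on the set-theoretic support of $\mathscr{F}$ in $X\times S$. That support is detected by $\mathrm H^0(\mathscr{F})$, because $\mathscr{F}$ is flat over $S$, so its restriction to $X\times S^{\text{cl}}$ is a classical flat family. An open subfunctor of $\Coh(X)^{\text{cl}}$ determines a unique open substack of $\Coh(X)$, so it is enough to work with classical $S$ and flat coherent $\mathscr{F}$ on $X\times S$.

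Next, choose a projective compactification $X\subset\bar X$ with $X$ open in $\bar X$, and set $\partial=\bar X\setminus X$. Then $\mathscr{F}$ has proper support over $S$ exactly when its support $Z\subset X\times S$ is closed in $\bar X\times S$, i.e.\ when $\bar Z\cap(\partial\times S)=\emptyset$. I would also show that this condition can be checked on fibers: the support of $\mathscr{F}$ is proper over $S$ iff each fiber $\mathscr{F}_s$ over $s\in S$ has finite support (that is, proper support in $X_s$). This should follow because the support of a flat family has fibers equal to the supports of the $\mathscr{F}_s$.

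The main obstacle is proving that $\{s\in S : \mathscr{F}_s \text{ has proper support}\}$ is open. The concern is that the support $Z$ might ``escape to infinity'' under specialization; note that the naive locus is only constructible in general, so openness genuinely needs an argument. What I would try first is the following. Fix $s_0$ where the support is proper, and let $W=\bar Z\cap(\partial\times S)$, a closed subset of $\bar X\times S$. Since $\bar X\to\Spec k$ is proper, the image of $W$ in $S$ is closed, and I want to show $s_0$ is not in it; its complement is then an open neighbourhood of $s_0$ where the support is proper. The difficulty is that $\bar Z$ could meet $\partial$ over $s_0$ even though $Z_{s_0}$ is proper. To rule this out, I would take a flat coherent extension $\bar{\mathscr{F}}$ of $\mathscr{F}$ to $\bar X\times S$, working locally on $S$ (or applying flattening stratification), and use that the support of a flat coherent sheaf on a proper family is proper over $S$ with fibers given by fiber supports. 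A fiber-dimension and connectedness argument should then show that components meeting $\partial$ over $s_0$ already do so in the fiber. If this extension step fails, my fallback is to restrict to the relevant case of zero-dimensional support: that locus is open by upper semicontinuity of fiber dimension of $\operatorname{Supp}\mathscr{F}\to S$ together with flatness, and finite fibers plus quasi-finiteness give properness on an open set by Zariski's main theorem.
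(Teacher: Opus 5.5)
Your reduction rests on two claims that fail when $X$ is not proper, so the argument does not go through.

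\textbf{First gap: there is no algebraic ambient stack.} $\Coh(X)$ without a support condition is not a 1-Artin stack locally almost of finite type. For $X=\mathbb{A}^1$, the point $\mathscr{O}_X$ has tangent complex $\End(\mathscr{O}_X)[1]=k[x][1]$. So its automorphism group is not of finite type, and there is no almost perfect cotangent complex. The geometricity result of \cite{PS} that the paper adapts (Proposition 2.29) is for the stack \emph{with} proper support, so there is nothing to inherit from.

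\textbf{Second gap: the inclusion is not open.} More fundamentally, $\Coh_{\text{prop}}(X)\to\Coh(X)$ is not an open immersion, and your fiberwise criterion is false. Take $X=\Spec k[x]$, $S=\Spec k[t]$ and $\mathscr{F}=\mathscr{O}_{X\times S}/(tx-1)$.
\begin{itemize}
\item As a module $\mathscr{F}$ is $k[t,t^{-1}]$, hence flat over $S$.
\item Its fiber over $t=c\neq 0$ is one reduced point, and its fiber over $t=0$ is zero. So every field-valued point of $S$ lies in the subfunctor, and an open subfunctor would have to be all of $S$.
\item But the support $\Spec k[t,t^{-1}]\to\Spec k[t]$ is not proper.
\end{itemize}
The same example shows the step you flagged cannot be repaired. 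The closure of the support in $\mathbb{P}^1\times S$ meets $\{\infty\}\times S$ exactly over $t=0$, even though $\mathscr{F}_0$ has empty support. So no connectedness or fiber-dimension argument can exclude escape to infinity. Your fallback fails too: the support is flat, separated and quasi-finite over $S$ with at most one-point fibers, yet it is not finite over any neighbourhood of $t=0$. Zariski's main theorem only factors it as an open immersion followed by a finite map.

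\textbf{What the paper does instead.} The paper never embeds $\Coh_{\text{prop}}(X)$ in a larger algebraic stack; it applies Lurie's representability theorem \cite[Theorem 18.1.0.2]{sag} to it directly.
\begin{itemize}
\item The classical truncation is the stack of flat coherent sheaves with proper support. There, properness of the support is part of the moduli problem rather than an open condition, and this stack has affine diagonal (the paper cites \cite[Tag 0DLY]{stacks}).
\item Nilcompleteness and infinitesimal cohesiveness come from base change of $\Coh(X)\to\APerf(X)$, plus a direct check for $\APerf_{\text{prop}}(X)\to\APerf(X)$. That check uses the lemma that proper support can be detected after restricting to $X\times S_{\text{red}}$; this is the correct form of your reduction to classical truncations.
\item The remaining input is handled as in \cite[Proposition 2.29]{PS}.
\item Affine diagonal is promoted from the classical truncation via \cite[Chapter 2 Corollary 3.2.8]{GR}.
\end{itemize}
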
 
\begin{proof}
	The proof of Proposition 2.29 of \cite{PS} applies verbatim, as it does not use the fact that the moduli parametrizes perfect, rather than almost perfect, complexes. Similarly, the morphism $\Coh(X)\to \APerf(X)$ is an infinitesimally cohesive and nilcomplete morphism of stacks which are themselves infinitesimally cohesive and nilcomplete. Since these properties are stable under base change (cf. the proof of Corollary 2.14 of \textit{op. cit.}), it follows that $\Coh_{\text{prop}}(X)\to \APerf_{\text{prop}}(X)$ is infinitesimally cohesive and nilcomplete. It remains to check that $\APerf_{\text{prop}}(X)\to \APerf(X)$ is infinitesimally cohesive and nilcomplete. Once we have this, we can use \cite[Theorem 18.1.0.2]{sag} to conclude that $\Coh_{\text{prop}}(X)$ is a 1-Artin stack locally almost of finite type. 

	We now need the following result, which says that proper support is detected on the classical truncation (in fact, on the underlying reduced, classical scheme).
	\begin{lem}\label{lem:proper-support-is-detected-on-the-classical-truncation}
		If $S$ is an affine scheme and $i:X\times S_{\text{red}}\to X\times S$ is the inclusion, then $\mathscr{F}\in \mathsf{APerf}(X \times S)$ has proper support relative to $S$ if and only if $i^*\mathscr{F}$ has proper support relative to $S_{\text{red}}$.
	\end{lem}  
	\begin{proof}[Proof of Lemma]
	 The forward implication is immediate by base change, while the reverse implication can be seen as follows. Suppose $i^*\mathscr{F}$ is supported on a closed subscheme $Z \hookrightarrow X\times S_{\text{red}}$ such that $Z \to S_{\text{red}}$ is proper. Then the composition $Z \to S_{\text{red}}\to S$ is also proper, so we see that $Z $ is a closed subscheme of $X\times S$ proper over $S$. Denoting $U\coloneqq (X\times S)\setminus Z $, we see that $i^*(\mathscr{F}|_U)\cong 0$. It is now sufficient to recall that pullbacks to geometric fibers are jointly conservative on almost perfect complexes to conclude. 
\end{proof}
	Now, to show that $\APerf_{\text{prop}}(X)\to \APerf(X)$ is nilcomplete, we would like to show that given an affine $S$ and $S_n=\tau_{\ge n}S$ its truncations,
	\begin{align*}
		\APerf_{\text{prop}}(X)(S)\to \APerf(X)(S)\times _{\lim \APerf(X)(S_n)}\lim \APerf_{\text{prop}}(X)(S_n)
	\end{align*} 
	is an equivalence. If $\mathscr{F}\in \mathsf{APerf}(X\times S)$ and every $\mathscr{F}|_{X\times S_n}$ has proper support, then its restriction to $S_{\text{red}}$ has proper support. Thus, $\mathscr{F}$ has proper support by the above Lemma. This proves essential surjectivity, while fully faithfulness follows from the fact that the map $\APerf_{\text{prop}}(X)\to \APerf(X)$ is an inclusion.

	To show infinitesimal cohesiveness, we follow the proof of \cite[Lemma 2.13]{PS}. Let $S=\Spec A$ be an affine scheme, $M\in \mathsf{QCoh}(S)^{\ge 1}$, $S[M]\coloneqq \Spec(A\oplus M)$, and $d:S[M]\to S$ be a derivation, and consider the pushout diagram
	\[
	\begin{tikzcd}
		S[M]\arrow[r,"d"]\arrow[d,"d_0"]&S\arrow[d,"f_0"]\\
		S\arrow[r,"f"]&S_d[M[-1]]
	\end{tikzcd},
	\] 
	where $d_0$ is the 0 derivation. Notice that all of the schemes in the diagram above have the same classical reduction $S_{\text{red}}$. Let $\mathscr{F}\in \mathsf{APerf}(X\times S_d[M[-1]])$, and suppose the restrictions of $\mathscr{F}$ along $f,f_0$ and $fd\cong f_0d_0$ have proper support. Since the subsequent restrictions to $S_{\text{red}}$ give the same almost perfect complex with proper support, we see that $\mathscr{F}$ has proper support over $S_d[M[-1]]$ by Lemma \ref{lem:proper-support-is-detected-on-the-classical-truncation}. This proves essential surjectivity of the relevant map; fully faithfulness follows because $\APerf_{\text{prop}}(X)\to \APerf(X)$ is an inclusion.

	Finally, the stack $\Coh_{\text{prop}}(X)$ has affine diagonal by \cite[Chapter 2 Corollary 3.2.8]{GR}, as its classical truncation is the stack of coherent complexes with proper support, which has affine diagonal by \cite[Tag 0DLY]{stacks}. 
\end{proof} 
 
\begin{thm}\label{thm:coherent-reps}
	Let $\mathcal{C}=\mathsf{QCoh}(X)$ for a classical quasi-projective scheme $X$. Then $\mathscr{Rep}_n(\mathcal{C})$ has a substack
	\begin{align*}
		\mathscr{Rep}_n^{\text{coh}}(X)\coloneqq \Coh_{\text{prop}}(X)\times _{\Vect}\pt\subset \mathscr{Rep}_n(\mathcal{C}),
	\end{align*} 
	called the \textbf{coherent representation moduli}, which is a 1-Artin stack locally almost of finite type (laft). Moreover, this stack has affine diagonal.
\end{thm}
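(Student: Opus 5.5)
The plan is to obtain the three assertions from Proposition \ref{prop:coherent-sheaves-with-proper-support-is-1-artin-laft-affine-diagonal} together with a relative analysis of the map $\pt\to\Vect$.

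\textbf{Step 1: the map to $\Vect$.} First I will produce a map $\Coh_{\text{prop}}(X)\to\Vect$ compatible with the inclusion into $\mathcal{X}_{\mathsf{QCoh}(X)}$. An $S$-point of $\Coh_{\text{prop}}(X)$ is a sheaf $\mathscr{F}\in\mathsf{APerf}(X\times S)$, flat over $S$, supported on some $Z\subset X\times S$ that is proper over $S$. The map is the composite
\begin{align*}
	\Coh_{\text{prop}}(X)\to\mathcal{X}_{\mathsf{QCoh}(X)}\to\Vect,
\end{align*}
using Lemma \ref{lem:perf-pushforward}. A quasi-projective scheme is perfect, so that lemma applies.

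\textbf{Step 2: the substack claim.} I will check that the forgetful map $\Coh_{\text{prop}}(X)\to\mathcal{X}_{\mathsf{QCoh}(X)}$ is a monomorphism, meaning its maps on $S$-points are inclusions of connected components of the maximal groupoids. This holds because, for each $S$, $\mathsf{Coh}_S(X\times S)$ with proper support is a full subcategory of $\mathsf{QCoh}(X\times S)$ and the subcategories are preserved by pullback. Base change along $\pt\to\Vect$ preserves monomorphisms, so the fiber product is a substack of $\mathscr{Rep}_n(\mathcal{C})$.

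\textbf{Step 3: geometric properties.} The key observation is that $p_{S*}\mathscr{F}$ is well behaved. Since $\mathscr{F}$ is almost perfect, $S$-flat, and has proper support, $p_{S*}\mathscr{F}$ is almost perfect by proper pushforward. Since $X$ is quasi-projective, it is quasi-compact, separated and of finite cohomological dimension, so $p_{S*}\mathscr{F}$ has bounded Tor amplitude. Together these make it perfect. Hence the map of Step 1 factors through the open substack $\Perf\subset\Vect$.

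The stack $\Perf$ is locally geometric and laft by Toën–Vaquié \cite{TV}. The map $\pt\to\Perf$ classifying $k^n$ factors as $\pt\to B\GL_n\to\Perf$, and $B\GL_n\to\Perf$ is an open immersion onto the locus of rank-$n$ vector bundles in degree $0$. It follows that $\pt\to\Perf$ is a smooth affine morphism: it is a $\GL_n$-torsor over an open substack. Therefore
\begin{align*}
	\mathscr{Rep}_n^{\text{coh}}(X)\to\Coh_{\text{prop}}(X)
\end{align*}
is the base change of $\pt\to B\GL_n$ along an open substack of $\Coh_{\text{prop}}(X)$, so it is a $\GL_n$-torsor over an open substack. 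All three properties then pass up from Proposition \ref{prop:coherent-sheaves-with-proper-support-is-1-artin-laft-affine-diagonal}:
\begin{itemize}
\item it is 1-Artin, since the map is affine and smooth;
\item it is laft, since the map is of finite type;
\item it has affine diagonal, since the diagonal of $\pt\to B\GL_n$ is affine.
\end{itemize}

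\textbf{Main obstacle.} The hardest point is Step 3's perfectness and rank claim. One must show that $p_{S*}\mathscr{F}$ is perfect, and that requiring it to be isomorphic to $\mathscr{O}_S^n$ is an open condition. Perfectness needs a relative version of the finiteness of proper pushforward for almost perfect complexes on derived bases; I will argue it via \cite{sag} on proper morphisms together with base change to geometric points. If openness of $B\GL_n\subset\Perf$ becomes delicate, the fallback is to note that the fiber over $k^n$ only sees the component where $p_{S*}\mathscr{F}$ is locally free of rank $n$ in degree $0$. That condition can be checked on geometric fibers using upper semicontinuity of cohomology, which reduces openness to the classical truncation.
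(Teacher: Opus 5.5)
Your proof is correct, but it takes a different route from the paper's.

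\textbf{The paper's route.} It does not pass through $B\GL_n$ in this proof. After the same perfectness computation for $p_{S*}\mathscr{E}$ (via \cite[Proposition 2.28]{PS} and a finite \v{C}ech complex), it glues these complexes to a perfect complex $\mathscr{V}$ on $\Coh_{\text{prop}}(X)$. It then forms the linear stack $\mathbb{V}=\Spec\Sym((\mathscr{V}^n)^\vee)$ of pairs $(\mathscr{E},\ \mathscr{O}_S^n\to p_{S*}\mathscr{E})$, which is affine and almost of finite type over $\Coh_{\text{prop}}(X)$. Finally it cuts out $\mathscr{Rep}_n^{\text{coh}}(X)$ as the open locus where the perfect cofiber of the universal map vanishes. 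This only shows that $\mathscr{Rep}_n^{\text{coh}}(X)\to\Coh_{\text{prop}}(X)$ is quasi-affine. Affine diagonal comes from factoring the diagonal through the relative diagonal (a base change of $\Delta_{\mathbb{V}}$), followed by a base change of $\Delta_{\Coh_{\text{prop}}(X)}$.

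\textbf{Your route.} You factor $\pt\to\Vect$ through $B\GL_n\subset\Perf$ and use openness of $B\GL_n$ in $\Perf$. This exhibits $\mathscr{Rep}_n^{\text{coh}}(X)$ as a $\GL_n$-torsor over the open substack $U\subset\Coh_{\text{prop}}(X)$ where $p_{S*}\mathscr{E}$ is a rank-$n$ bundle in degree $0$.

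\textbf{What each buys.} Your conclusion is sharper: the map is smooth and affine onto $U$, with $\mathscr{Rep}_n^{\text{coh}}(X)/\GL_n\cong U$. This is exactly the mechanism the paper uses only later, in Proposition~\ref{prop:quot-scheme-is-torsor}, restricted to $\Coh^n_0(X)$. The paper's route replaces the input ``$B\GL_n\subset\Perf$ is open'' with the equally standard fact that the vanishing locus of a perfect complex is open.

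\textbf{Minor corrections.} None of these affects your deductions.
\begin{itemize}
\item $\Perf$ is not an open substack of $\Vect$. For example, $k(t)$ as a $k[t]$-module is perfect after pullback to every field-valued point but not over any nonempty open. What you actually use is that $\Perf\to\Vect$ is a monomorphism, which suffices to replace $\times_{\Vect}$ by $\times_{\Perf}$.
\item The map $\pt\to\Perf$ is smooth and quasi-affine, not affine, since it is a torsor followed by an open immersion. Your later statement that the map is a torsor over the open $U$ is the correct one.
\item For affine diagonal, you should also note that $U$ inherits affine diagonal from $\Coh_{\text{prop}}(X)$. This holds because $U\to\Coh_{\text{prop}}(X)$ is a monomorphism, so $\Delta_U$ is a base change of $\Delta_{\Coh_{\text{prop}}(X)}$. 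Combine this with the affine relative diagonal of the torsor.
\end{itemize}
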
 
\begin{proof}
	We will show that $\mathscr{Rep}_n^{\text{coh}}(X)$ is an open substack of a stack $\mathscr{V}$ which is affine and almost of finite type over $\mathsf{Coh}_{\text{prop}}(X)$. This proves that $\mathscr{Rep}_n^{\text{coh}}(X)$ is quasi-affine and almost of finite type over $\mathsf{Coh}_{\text{prop}}(X)$. Since $\Coh_{\text{prop}}(X)$ is 1-Artin and laft by Proposition \ref{prop:coherent-sheaves-with-proper-support-is-1-artin-laft-affine-diagonal}, this proves that $\mathscr{Rep}_n^{\text{coh}}(X)$ is also 1-Artin and laft.

	We first show that if $\mathscr{E}\in \mathsf{Coh}_\text{prop}(X)(S)$ is an $S$-point, then $\mathscr{V}_{\mathscr{E}}=p_{S*}\mathscr{E}$, where $p_S:X\times S\to S$, is perfect and its construction commutes with arbitrary base change. Indeed, to see that we have base change, it is sufficient to observe that $p_S$ is schematic quasi-compact, hence satisfies the conditions of \cite[Chapter 3 Section 2.2]{GR}. By \cite[Proposition 2.28]{PS}, $\mathscr{V}_{\mathscr{E}}$ is almost perfect. Let $M\in \mathsf{QCoh}(S)^\heartsuit$. Then $\mathscr{V}_{\mathscr{E}}\otimes_{\mathscr{O}_S}M\cong p_{S*}(\mathscr{E}\otimes _{\mathscr{O}_X}p_{S^*}M)$ by the projection formula (\cite[Chapter 3 Lemma 3.2.4]{GR}). By definition of $\mathscr{E}$, $\mathscr{E}\otimes_{\mathscr{O}_X} p_{S^*}M$ is tor amplitude 0. Since $X$ is quasi-compact, there is a finite Zariski cover; thus, for $\mathscr{F}\in \mathsf{QCoh}(X\times S)^\heartsuit$, $p_*\mathscr{F}$ is computed by a finite \v{C}ech complex where the terms are $t$-exact pushforwards of restrictions of $\mathscr{F}$. Thus, $p_*\mathscr{F}$ is of finite tor amplitude; moreover, it is coconnective. It follows that $\mathscr{V}_{\mathscr{E}}$ is almost perfect of tor amplitude $[0,a]$, hence perfect \cite[Proposition 7.2.4.23]{HA}.

	Because of the base change compatibility of the definition of $\mathscr{V}_{\mathscr{E}}$, the assignments $\mathscr{E}\mapsto p_*\mathscr{E}$ glue to give a perfect complex $\mathscr{V}\in\mathsf{Perf}(\Coh_{\text{prop}}(X))$. Then $(\mathscr{V}^n)^\vee$ is a connective perfect complex; hence, defines a stack
\begin{align*}
	\mathbb{V}\coloneqq \Spec_{\Coh_{\text{prop}}(X)}\Sym((\mathscr{V}^n)^\vee)\xrightarrow{p} \Coh_{\text{prop}}(X)
\end{align*} 
which is affine almost of finite type over $\Coh_{\text{prop}}(X)$. Notice that $S$-points of $\mathbb{V}$ are precisely given by $S$-points $f:S\to \Coh_{\text{prop}}(X)$ together with a map $\mathscr{O}^n_S\to f^*\mathscr{V} = p_{S*}\mathscr{E}$ by the base change compatibility of $\mathscr{V}$. We thus see that $S$-points of $\mathbb V$ are pairs of an almost perfect complex $\mathscr{E}$ on $X\times S$ and a map $\mathscr{O}^n_S\to p_{S*}\mathscr{E}$.

It remains to show that $\mathscr{Rep}_n^{\text{coh}}(X)$ is an open subfunctor of this stack. Let $\mathscr{O}^n_{\mathbb{V} }\to p^*\mathscr{V}$ be the universal map, and observe that its cofiber is a perfect complex. Clearly, the locus where that cofiber vanishes is open; moreover, that locus is precisely the locus where the map
\begin{align*}
	\mathscr{O}^n_{\mathbb{V} }\to p^*\mathscr{V}
\end{align*} 
is an isomorphism.

Now, we will show that $\mathscr{Rep}_n^{\text{coh}}(X)$ has affine diagonal. Note that $\Coh_{\text{prop}}(X)$ has affine diagonal by Proposition \ref{prop:coherent-sheaves-with-proper-support-is-1-artin-laft-affine-diagonal}. Since $\mathbb V\to \Coh_{\text{prop}}(X)$ is affine, it is separated, so the diagonal 
\begin{align*}
	\Delta_{\mathbb{V} }:\mathbb{V} \to \mathbb{V} \times _{\Coh_{\text{prop}}(X)}\mathbb{V} 
\end{align*} 
is also affine. We see that the diagonal of $\mathscr{Rep}_n^{\text{coh}}(X)$ factors as
\begin{align*}
	\Delta_{\mathscr{Rep}_n^{\text{coh}}(X)}:\mathscr{Rep}_n^{\text{coh}}(X)\to \mathscr{Rep}_n^{\text{coh}}(X)\times _{\Coh_{\text{prop}}(X)}\mathscr{Rep}_n^{\text{coh}}(X)\to \mathscr{Rep}_n^{\text{coh}}(X)\times \mathscr{Rep}_n^{\text{coh}}(X),
\end{align*} 
where the first map is affine because the map $\mathscr{Rep}_n^{\text{coh}}(X)\to \Coh_{\text{prop}}(X)$ is quasi-affine, hence has affine relative diagonal (it is a base change of $\Delta_\mathbb{V} $), and the second is affine because it is the base change of the affine diagonal of $\Coh_{\text{prop}}(X)$.
\end{proof}
For a classical quasi-projective scheme $X$, we have the substack
\begin{align*}
\Coh_{\text{prop}}^{\le d}(X)\subset \Coh_{\text{prop}}(X)
\end{align*} 
of sheaves whose proper support is dimension $\le d$. The stack $\Coh_{\text{prop}}^{\le 0}(X)$ has a decomposition
\begin{align*}
	\Coh_{\text{prop}}^{\le 0}(X)=\coprod_{m\in \mathbb{N}_0}\Coh_0^m(X)
\end{align*} 
by substacks of sheaves with Hilbert polynomial identically equal to $m$, which follows from the decomposition of the underlying classical stack. In particular, we have a stack
\begin{align*}
	\Coh^n_0(X)
\end{align*} 
of length $n$ sheaves on $X$.
\begin{cor}\label{cor:quot-is-a-scheme}
Let $X$ be a quasi-projective classical scheme. Then the fiber
\begin{align*}
	\Quot^{n,\text{fr}}_{\mathscr{O}^n_{X}}(X)\coloneqq \Coh^n_0(X)\times _{\Vect}\pt\subset \mathscr{Rep}_n(X)
\end{align*} 
is a quasi-compact scheme almost of finite type with affine diagonal.
\end{cor}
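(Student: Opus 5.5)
The plan is to show that the fiber $\Quot^{n,\text{fr}}_{\mathscr{O}^n_X}(X)$ is an algebraic space: it is an open substack of the affine stack $\mathbb{V}$ constructed in the proof of Theorem \ref{thm:coherent-reps}, restricted over the open and closed substack $\Coh^n_0(X)\subset\Coh_{\text{prop}}(X)$. By that theorem it is already a 1-Artin stack, laft, with affine diagonal. So the remaining work is to show three things: that it has trivial (or at least monomorphic) stabilizers, that its classical truncation is a quasi-compact scheme, and that derived Artin stacks whose truncation is a scheme are schemes. For the last point I would cite \cite[Chapter 2]{GR} or \cite{sag}: a derived stack that is $n$-Artin, with classical truncation a scheme, is a derived scheme, since schematicity is detected on the underlying classical stack.

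\textbf{Step 1: trivial automorphisms.} For a field-valued point $(\mathscr{E},\varphi)$ with $\mathscr{E}$ a length $n$ torsion sheaf on $X_K$, the automorphism group is the fiber of $\mathrm{Aut}(\mathscr{E})\to \GL(\Gamma(X_K,\mathscr{E}))$ over the identity. Since $\mathscr{E}$ has $0$-dimensional support $Z$, $\mathscr{E}$ is determined by the module $\Gamma(\mathscr{E})$ over the Artinian ring $\mathscr{O}(Z)$. Hence $\mathrm{Aut}(\mathscr{E})\hookrightarrow \GL(\Gamma(\mathscr{E}))$ is a closed immersion, and the fiber over the identity is trivial. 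In families, the same argument shows that the diagonal is a monomorphism. This uses that $p_{S*}$ is faithful on sheaves finite over $S$, i.e. affine pushforward is conservative. Combined with the affine diagonal, the classical truncation is then an algebraic space.

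\textbf{Step 2: identify the classical truncation with the framed Quot locus.} Given $\varphi:\mathscr{O}^n_S\xrightarrow{\sim}p_{S*}\mathscr{E}$, the adjoint map $\mathscr{O}^n_{X\times S}\to\mathscr{E}$ is surjective. Indeed, $\mathscr{E}$ is finite over $S$, so $\mathscr{E}$ is generated by $p_{S*}\mathscr{E}$ as a module over the affine scheme $\mathrm{Supp}\,\mathscr{E}\to S$. Conversely, a quotient $\mathscr{O}^n\twoheadrightarrow\mathscr{E}$ with $\mathscr{O}^n_S\to p_{S*}\mathscr{E}$ an isomorphism gives a point. These two constructions are inverse, which identifies the classical truncation with an open subfunctor of $\Quot^n_{\mathscr{O}^n_X}(X)$. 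The Quot scheme of points on a quasi-projective $X$ is a quasi-projective scheme, by Grothendieck/Nitsure, cf. \cite{nitsure-quot}. Hence the truncation is a scheme, and it is of finite type.

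\textbf{Step 3: quasi-compactness.} The Quot scheme of length $n$ points on quasi-projective $X$ is quasi-compact, being open in the projective Quot scheme of a compactification. The framed locus is open in it (the isomorphism condition is open, as in Theorem \ref{thm:coherent-reps}). However, an open subscheme of a noetherian scheme is always quasi-compact, so quasi-compactness follows.

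The step I expect to be the main obstacle is Step 2's openness together with Step 1 done in families. The identification requires knowing that the framed locus is really open inside Quot, which holds because $p_{S*}\mathscr{E}$ is locally free of rank $n$ for flat length $n$ families. It also requires that derived schematicity follows from the classical statement plus the laft deformation theory already established. If Step 1 in families proves awkward, I would instead deduce monomorphic diagonal directly from the identification in Step 2, since the Quot functor is a sheaf of sets.
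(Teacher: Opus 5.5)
Your proposal is correct and follows essentially the paper's route: identify the classical truncation of the fiber with the open framed locus of the Quot scheme of points, which is a quasi-compact finite-type scheme. Then use that a laft derived Artin stack whose classical truncation is a scheme is a derived scheme, which the paper does via \cite[Definition 8.5.7(iii)]{khan-underlying-classical-is-dm-implies-dm} (DM-ness is detected classically) followed by \cite[Corollary 1.6.7.4]{sag}, and inherit the affine diagonal from Theorem \ref{thm:coherent-reps}. The differences are minor: the paper cites \cite{KV} for the classical identification, whereas you prove it directly by adjunction, and your Step 1 on stabilizers is redundant once Step 2 is in place.
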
 
\begin{proof}
	Since
	\begin{align*}
		Y=\Quot^{n,\text{fr}}_{\mathscr{O}_X^n}(X)\subset \mathscr{Rep}_n^{\text{coh}}(X)
	\end{align*} 
	has classical truncation a scheme of finite type by the proof of \cite[Proposition 4.4.1]{KV}, it is a quasi-compact scheme almost of finite type. This immediately implies that $Y$ is Deligne-Mumford since its classical truncation is \cite[Definition 8.5.7(iii)]{khan-underlying-classical-is-dm-implies-dm}. One can then use \cite[Corollary 1.6.7.4]{sag} to conclude that $Y$ is in fact a scheme. 

	Further, the scheme $Y$ has affine diagonal because it is a substack of $\mathscr{Rep}_n^{\text{coh}}(X)$, which has affine diagonal by Theorem \ref{thm:coherent-reps}.
\end{proof} 
\begin{rmk}
	An $S$-point of $\Quot^{n,\text{fr}}_{\mathscr{O}_X^n}(X)$ is thus a family $\mathscr{E}\in \mathsf{Coh}^n_0(X)(S)$ together with an isomorphism
	\begin{align*}
		\mathscr{O}^n_S\xrightarrow{\cong}p_{S*}\mathscr{E}.
	\end{align*} 
	By adjunction, this isomorphism determines a quotient map $q: \mathscr{O}^n_{X\times S}\to \mathscr{E}$ which is surjective on $\mathrm H^0$. Hence, the classical truncation of this derived scheme is the open locus of the classical Quot scheme of length $n$ quotients whose adjoint $\mathscr{O}^n_S\to p_{S^*}\mathscr{E}$ is an isomorphism. We call this the framed locus.
\end{rmk} 

Even for affine $X=\Spec A$, we don't necessarily have that coherent representations match all representations. Indeed, suppose $A $ is the free commutative algebra on infinitely many generators. Then $k$ is not finitely presented as an $A$-module. By \cite[Proposition 7.2.4.17]{HA}, this implies that it is not almost perfect as an $A$-module, hence not coherent in the above sense. However, we do have the following.
\begin{prop}\label{prop:quot-scheme-is-reps}
 	Let $A$ be a classical finitely presented commutative algebra. Then 
	\begin{align*}
		\Rep_n(A)\cong \Quot^{n,\text{fr}}_{A^n}(\Spec A).
	\end{align*} 
 \end{prop}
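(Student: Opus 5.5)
By Theorem \ref{thm:rep-on-affines}, $\Rep_n(A)\cong\mathscr{Rep}_n(A\Mod)=\mathcal{X}_{A\Mod}\times_{\Vect}\{k^n\}$. For a commutative $A$ we have $A\Mod=\mathsf{QCoh}(\Spec A)$, and for affine $X$ the pushforward $p_{S*}$ is just restriction of scalars along $B\to A\otimes B$. So the two maps to $\Vect$ from Lemma \ref{lem:perf-pushforward} coincide, and $\mathscr{Rep}_n(A\Mod)\cong\mathscr{Rep}_n(\mathsf{QCoh}(\Spec A))$. The statement therefore reduces to the following claim: the inclusion
\begin{align*}
	\Quot^{n,\text{fr}}_{A^n}(\Spec A)=\Coh^n_0(\Spec A)\times_{\Vect}\pt\hookrightarrow \mathcal{X}_{\mathsf{QCoh}(\Spec A)}\times_{\Vect}\pt
\end{align*}
is an equivalence. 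Concretely, for every affine $S=\Spec B$, every $(A\otimes B)$-module $\mathscr{E}$ whose underlying $B$-module is equivalent to $B^n$ must be almost perfect, $B$-flat (tor amplitude $\ge 0$ relative to $S$), have proper support over $S$, and have fibers of length $n$. Both sides are full subgroupoids of $S$-points, so checking objects is enough.

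\textbf{Flatness and length.} Since $\mathscr{E}\simeq B^n$ as a $B$-module, $\mathscr{E}\otimes_B M\simeq M^n$ for every $M\in B\Mod^\heartsuit$. Hence $\mathscr{E}$ has tor amplitude $0$ relative to $S$. At each geometric point $s$, the fiber $\mathscr{E}_s$ is a discrete $A\otimes k(s)$-module of dimension $n$, so it is a length $n$ torsion sheaf. This also places $\mathscr{E}$ in $\Coh^n_0$ rather than only in $\Coh^{\le 0}_{\text{prop}}$.

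\textbf{Proper support.} Choose a presentation $A=k[x_1,\dots,x_m]/I$. Each $x_i$ acts on $\mathrm H^0$-level data, and classically it acts $B$-linearly on $\pi_0\mathscr{E}\cong(\pi_0B)^n$. By Cayley--Hamilton, $\chi_i(x_i)$ annihilates $\mathscr{E}$ classically, where $\chi_i$ is the characteristic polynomial of $x_i$. Let $Z\subset\Spec A\times S$ be the closed subscheme cut out by the $\chi_i(x_i)$. Then $Z\to S$ is finite, hence proper. By Lemma \ref{lem:proper-support-is-detected-on-the-classical-truncation} it suffices to verify vanishing off $Z$ after passing to $S_{\text{red}}$, or even to geometric fibers, and there it holds because $\chi_i(x_i)$ acts nilpotently.

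\textbf{Almost perfectness.} This is the main obstacle. The input is that $\mathscr{E}$ is perfect over $B$, and the goal is to show it is almost perfect over $A\otimes B$. Here finite presentation of $A$ is essential, which is why the infinite-generator counterexample above fails. My plan is to use $A\otimes B$ of finite presentation over $B$ together with the criterion of \cite[Proposition 7.2.4.17]{HA}, through the following steps:
\begin{enumerate}
	\item Verify that each $\pi_i\mathscr{E}$ is finitely presented over $\pi_0(A\otimes B)$. This holds because it is already finitely generated over $\pi_0B$, and $\pi_0(A\otimes B)$ is finitely presented over $\pi_0B$.
	\item Note that $\mathscr{E}$ is bounded, being perfect over $B$.
\end{enumerate}
These two facts give almost perfectness whenever $A\otimes B$ is coherent. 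Since $B$ need not be noetherian, I would not rely on coherence. Instead, I would argue by induction on Postnikov truncations, using the relative version of the standard fact that a module finite over $B$ and almost perfect over $B$ is almost perfect over any almost finitely presented $B$-algebra acting on it. That fact follows from the cofiber sequence for $(A\otimes B)\otimes_B\mathscr{E}\to\mathscr{E}$: the module $A\otimes B\otimes_B\mathscr{E}$ is almost perfect, and the fiber of this map is controlled by the almost perfect module $L_{A\otimes B/B}$ applied to $\mathscr{E}$.
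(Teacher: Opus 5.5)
Your reduction, the flatness and length check, and the proper-support argument match the paper's outline. The Cayley--Hamilton argument in fact supplies the finiteness of the support that the paper only asserts. Since $\chi_i(x_i)\in\pi_0(A\otimes B)$ kills all of $\pi_*\mathscr{E}\cong(\pi_*B)^n$, you need not go through Lemma~\ref{lem:proper-support-is-detected-on-the-classical-truncation}, which presupposes almost perfectness.

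The gap is in the almost perfectness step, which is the real content of the proposition.

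\emph{Step 1 is false in general.} We have $\pi_i\mathscr{E}\cong(\pi_iB)^n$, and $\pi_iB$ need not be finitely generated over $\pi_0B$.

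\emph{The replacement argument does not work.} Put $S=A\otimes B$ and $T=S\otimes_BS=A\otimes A\otimes B$. The fiber of $S\otimes_B\mathscr{E}\to\mathscr{E}$ is $I\otimes_S\mathscr{E}$, where $I=\mathrm{fib}(T\to S)$ is the diagonal ideal and the tensor is over the second factor. This fiber is no more connective than $\mathscr{E}$, and it is not almost perfect over $B$. Already for $A=k[x]$ one has $I\simeq T$, so the fiber is $S\otimes_B\mathscr{E}=\mathscr{E}[x]$. Thus the fiber does not re-enter the class you would induct over, and a Postnikov induction gains nothing.

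\emph{$L_{S/B}$ does not control the fiber either.} It agrees with $S\otimes_TI$ only in the bottom degree ($I/I^2\cong\Omega_{S/B}$). Reading ``$L_{S/B}$ applied to $\mathscr{E}$'' as $L_{S/B}\otimes_S\mathscr{E}$ is circular, since its almost perfectness over $S$ presupposes that of $\mathscr{E}$. A Koszul-type description of $I$ through $\Omega$ exists in the smooth case, where the diagonal is a regular immersion, but not for an arbitrary finitely presented $A$.

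The fact you want to invoke is true, but what proves it is control of the diagonal itself as a $T$-module. This is the missing idea.
\begin{enumerate}
\item Since $A\otimes A$ is noetherian and $A$ is a cyclic $(A\otimes A)$-module, $A$ is almost perfect over $A\otimes A$. By base change, $S$ is almost perfect over $T$.
\item Choose perfect $T$-modules $P_m\to S$ with $m$-connective fibers. Tensoring over $S$ with the connective module $\mathscr{E}$ gives maps $P_m\otimes_S\mathscr{E}\to\mathscr{E}$ with $m$-connective fibers.
\item Each $P_m\otimes_S\mathscr{E}$ lies in the thick subcategory generated by $T\otimes_S\mathscr{E}\simeq S\otimes_B\mathscr{E}$, which is almost perfect over $S$ by base change. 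Hence $\mathscr{E}$ is almost perfect over $S$.
\end{enumerate}
This is the paper's lemma. Equivalently, it is precisely the input that would make your cofiber sequence work, since it shows $I\otimes_S\mathscr{E}$ is almost perfect over $S$.
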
 
 \begin{proof}
	 Note that by Theorem \ref{thm:rep-on-affines}, we have an inclusion
	 \begin{align*}
		 \Quot^{n,\text{fr}}_{A^n}(\Spec A) \hookrightarrow \Rep_n(A),
	 \end{align*} 
	 and it is sufficient to show that this map is surjective. Let $\Spec B$ be an affine scheme. Any almost perfect $(A\otimes B)$-module $M$ such that $M\cong B^n$ as $B$-modules has support which is finite over $\Spec B$; hence, the coherent sheaf on $\Spec(A\otimes B)$ which it defines has proper support relative to $\Spec B$. Since the pushforward of the sheaf corresponding to $M$ to $\Spec B$ is locally free of rank $n$ and its support is finite over $\Spec B$, it is a flat family of length $n$ sheaves, hence, it defines a point of $\mathsf{Coh}_0^n(\Spec A)$. Clearly, $M$ is tor amplitude 0 over $B$, as, for $N\in B\Mod^\heartsuit$,
	 \begin{align*}
	 	M\otimes _{A\otimes B}(A\otimes B \otimes _B N)\cong M\otimes _B N \cong B^n\otimes _B N\cong N^n.
	 \end{align*}  Thus, it is sufficient to show that any $(A\otimes B)$-module $M$ with an isomorphism $M\cong B^n$ is almost perfect.

	 We will first prove the following.
	 \begin{lem}
	 	Let $R\to S$ be a map of connective algebras such that $S$ is almost perfect as an $(S\otimes _{R}S)$-module. If $M$ is an $S$-module which is almost perfect over $R$, then it is almost perfect over $S$.
	 \end{lem} 
	 \begin{proof}[Proof of Lemma]
	 	 Without loss of generality, we can assume that $M$ is connective; indeed, since $M$ is almost perfect over $R$, it is bounded above. Let $T=S\otimes _RS$, and consider the functor
		 \begin{align*}
		 	-\otimes _SM:T\Mod\to S\Mod,
		 \end{align*} 
		 where $N\otimes _SM$ has an $S$-module structure via the left $S$-action on the first factor. Since $S$ is almost perfect over $T$, we can, by \cite[Proposition 7.4.2.11]{HA}, choose for each $m$ a perfect $T$-module $P_m$ equipped with a map $P_m\to S$ that has $m$-connective fiber. Applying $-\otimes _SM$, the map $P_m\otimes _SM\to M$ has $m$-connective fiber. Since $P_m\in \mathsf{Perf}(T)$, $P_m\otimes_S M$ is in the thick subcategory of $S$-mod generated by $S\otimes _R M$. Since $S\otimes _RM$ is almost perfect over $S$ by base change of the almost perfect $R$-module $M$ and almost perfect modules are closed under finite colimits and retracts by \textit{loc. cit.} $P_m\otimes _SM$ is almost perfect over $S$. Now, if $M$ is bounded above and for every $m$ there is an almost perfect $Q_m\to M$ whose fiber is $m$-connective, then $M$ is almost perfect by \textit{loc. cit.}
	 \end{proof}
	 We would like to apply the Lemma to our situation, where $R=B$, $S=A\otimes B$, which will conclude the proof that $M$ is almost perfect over $A\otimes B$. Note that $R$ and $S$ are connective because $\Spec B$ and $\Spec (A\otimes B)$ are affine schemes, and $M$ is almost perfect over $B$. Since $A$ is finitely presented over $k$, it is almost perfect as a module over $A\otimes A$. Indeed, $A\otimes A$ is left coherent (because it's noetherian); hence, by \cite[Proposition 7.2.4.17]{HA}, $A$ is almost perfect over $A\otimes A$. Indeed, it is connective, and the multiplication
	 \begin{align*}
	 	A\otimes A\to A
	 \end{align*} 
	 has finitely generated image. Thus, base change along a connective algebra $B$ gives
	 \begin{align*}
	 	A\otimes B\cong A\otimes _{A\otimes A}(A\otimes A\otimes B),
	 \end{align*} 
	 so $A\otimes B$ is almost perfect over $A\otimes A\otimes B=S\otimes _RS$.

\end{proof}

 \begin{cor}\label{cor:coherent-reps-match-reps}
	If $X=\Spec A$ is affine of finite presentation, then
	\begin{align*}
		\mathscr{Rep}_n^{\text{coh}}(X)\cong\Rep_n(A).
	\end{align*} 
\end{cor}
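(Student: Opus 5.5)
The plan is to squeeze $\mathscr{Rep}_n^{\text{coh}}(X)$ between two objects already known to be equivalent to $\Rep_n(A)$. We have a chain of substacks
\begin{align*}
	\Quot^{n,\text{fr}}_{A^n}(\Spec A)=\Coh^n_0(X)\times_{\Vect}\pt\subset \Coh_{\text{prop}}(X)\times_{\Vect}\pt=\mathscr{Rep}_n^{\text{coh}}(X)\subset \mathscr{Rep}_n(\mathsf{QCoh}(X)).
\end{align*}
The outer term is $\Rep_n(A)$ by Theorem \ref{thm:rep-on-affines}, since $\mathsf{QCoh}(\Spec A)=A\Mod$ and the pushforward to $\Vect$ is restriction of scalars. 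Proposition \ref{prop:quot-scheme-is-reps} says the composite of the two inclusions is an equivalence. Here ``finite presentation'' should mean $A$ is classical and finitely presented, as in that proposition.

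Both inclusions are monomorphisms of stacks: each is a fiber product of full subgroupoid inclusions $\ywidetilde{\mathsf{Coh}^n_0}\subset\ywidetilde{\mathsf{Coh}_{\text{prop}}}\subset\ywidetilde{\mathsf{QCoh}}$ over $\Vect$ with $\pt$. A composite of monomorphisms that is an equivalence forces each factor to be essentially surjective on $S$-points, hence an equivalence. So it suffices to check that the inclusions are full on $S$-points. Concretely, every $S$-point $(\mathscr{E},\varphi)$ of $\mathscr{Rep}_n(\mathsf{QCoh}(X))$ lies in $\Quot^{n,\text{fr}}$, and in particular in $\mathscr{Rep}_n^{\text{coh}}(X)$.

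This is precisely what the proof of Proposition \ref{prop:quot-scheme-is-reps} establishes. An $(A\otimes B)$-module $M$ with $M\cong B^n$ over $B$ is almost perfect over $A\otimes B$, by the lemma on relative almost perfectness applied to $B\to A\otimes B$. It has tor amplitude $0$ relative to $B$, and its support is finite over $\Spec B$. Hence it defines a point of $\Coh^n_0(X)\subset\Coh_{\text{prop}}(X)$.

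The one step needing care is naturality in $S$: the identifications must be compatible with base change. This holds because all the substacks are defined as full subfunctors of the same functor $S\mapsto\ywidetilde{\mathsf{QCoh}(X\times S)}\times_{\mathsf{QCoh}(S)}\{\mathscr{O}^n_S\}$. I expect no real obstacle, since the corollary is essentially a restatement of the preceding proposition.
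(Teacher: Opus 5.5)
Your proposal is correct and matches the paper, which states this corollary without a separate proof as an immediate consequence of Proposition \ref{prop:quot-scheme-is-reps}. The inclusions $\Quot^{n,\text{fr}}_{A^n}(\Spec A)\subset \mathscr{Rep}_n^{\text{coh}}(X)\subset \mathscr{Rep}_n(\mathsf{QCoh}(X))\cong\Rep_n(A)$ are full subfunctor inclusions, and that proposition's proof shows every $S$-point of $\Rep_n(A)$ already lies in the innermost term, so the middle inclusion is an equivalence; this is exactly your sandwich argument.
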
 

\begin{rmk}
	We have thus shown that in the case of $X=\Spec A$ of finite presentation, coherent moduli, representation moduli, derived representation varieties, and the derived framed locus $\Quot ^{n,\text{fr}}_{\mathscr{O}_X^n}(X)$ all agree. This completes the proof of Theorem \ref{thm:intro-affine-reps-all-agree} from the introduction.
\end{rmk} 

\subsection{The Universal Representation}\label{sec:universal-sheaves}
From now on, let $X$ be a classical quasi-projective scheme. Associated to the identity map
\begin{align*}
	\Coh^n_0(X)\to \Coh^n_0(X),
\end{align*} 
there is a universal sheaf $\mathscr{U}$ on
\begin{align*}
	\Coh^n_0(X)\times X
\end{align*} 
defined as follows. We can extend the moduli functor $\Coh^n_0(X)$ from affine schemes to stacks by right Kan extension along the inclusion $\mathsf{Aff}\to \mathsf{Stk}$. Explicitly, if $ Z$ is a stack, then we define
\begin{align*}
	\Coh^n_0(X)(Z)\coloneqq \lim_{S\to Z}\Coh^n_0(X)(S),
\end{align*} 
where the limit is taken over all affine schemes $S$ mapping to $Z$. Thus, there are equivalences of spaces
\begin{align*}
	\mathsf{Map}(\Coh^n_0(X),\Coh^n_0(X))\cong&\lim_{S\to \Coh^n_0(X)}\Coh^n_0(X)(S).
\end{align*} 
In particular, we see that there is an object in
\begin{align*}
	\lim_{S\to \Coh^n_0(X)}\Coh^n_0(X)(S)
\end{align*} 
which corresponds to the identify morphism $\Coh^n_0(X)\to \Coh^n_0(X)$. Since $\Coh^n_0(X)(S)\subset \Coh_{\text{prop}}(X)(S)\subset \Coh(X)(S)\subset \mathsf{APerf}(X\times S)$, descent for almost perfect complexes implies that
\begin{align*}
	\mathsf{APerf}(\Coh^n_0(X) \times X)\cong \lim_{S\to \Coh^n_0(X)}\mathsf{APerf}(S\times X)\supset \lim_{S\to \Coh^n_0(X)}\Coh^n_0(X)(S).
\end{align*} 
This defines an almost perfect complex $\mathscr{U}\in \mathsf{APerf}(\Coh^n_0(X)\times X)$. Moreover, if $f:S\to \Coh^n_0(X)$ is an $S$-point, then the pullback of $\mathscr{U}$ along $f\times \id_X:S\times X\to \Coh^n_0(X)\times X$ gives a length $n$ torsion sheaf on $S\times X$ with proper support relative to $S$.

Informally, this sheaf can be understood as follows. Given an $S$-point 
\begin{align*}
	(f,x):S\to \Coh^n_0(X)\times X
\end{align*} 
we obtain a sheaf on $S\times X$ via the map $f$ together with a map $x:S\to X$. By pulling back this sheaf along the graph of $x$ in $S\times X$, we obtain a sheaf on $S$, and this is precisely the pullback of the sheaf $\mathscr{U}$ to $S$ along $(f,x)$.

Denote $Y=\Quot^{n,\text{fr}}_{\mathscr{O}_X^n}(X)$. Pulling back $\mathscr{U}$ along
\begin{align*}
	\pi \times \id_X:Y\times X\to \Coh^n_0(X) \times X,
\end{align*} 
where $\pi:Y\to \Coh^n_0(X)$ is the projection, gives a universal sheaf on $Y\times X$,
\begin{align*}
	\mathscr{R}\coloneqq (\pi\times \id_X)^*\mathscr{U}.
\end{align*} 
Note that $\mathscr{R}$ is an almost perfect complex.
\begin{lem}\label{lem:R-has-proper-support}
	The sheaf $\mathscr{R}$ has proper support relative to $Y= \Quot^{n,\text{fr}}_{\mathscr{O}_X^n}(X)$.
\end{lem}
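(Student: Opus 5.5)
The plan is to reduce to affine test schemes and use that $\mathscr{U}$ is built so that its pullback to each $S$-point already has proper support. Concretely, $\mathscr{R}=(\pi\times\id_X)^*\mathscr{U}$ lives on $Y\times X$, and $Y$ is a quasi-compact scheme by Corollary \ref{cor:quot-is-a-scheme}. So I would choose a finite Zariski cover $\{S_i=\Spec B_i\to Y\}$ by affines. For each $i$, the composite $S_i\to Y\xrightarrow{\pi}\Coh^n_0(X)$ is an $S_i$-point $f_i$. By the construction of $\mathscr{U}$ as the object of $\lim_{S\to\Coh^n_0(X)}\Coh^n_0(X)(S)$, we have $\mathscr{R}|_{S_i\times X}\cong (f_i\times\id_X)^*\mathscr{U}$. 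This is an object of $\Coh^n_0(X)(S_i)\subset\Coh_{\text{prop}}(X)(S_i)$, so it is supported on a closed $Z_i\subset S_i\times X$ that is proper over $S_i$.

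Next I would glue these supports into a single closed subscheme of $Y\times X$. The cleanest choice is to avoid gluing the $Z_i$ and instead take $Z$ to be the set-theoretic support of $\mathscr{R}$, that is, the union of the supports of its cohomology sheaves, with its reduced structure. Since $\mathscr{R}$ is almost perfect, its cohomology sheaves $\mathrm{H}^j(\mathscr{R})$ are coherent. On $S_i\times X$ only finitely many $j$ contribute, because the restriction is a flat family with tor amplitude $0$ and so is concentrated in a single degree. Hence $Z$ is closed and $Z\cap(S_i\times X)\subset Z_i$. Then $\mathscr{R}|_{(Y\times X)\setminus Z}\cong 0$ by definition, because an almost perfect complex with vanishing cohomology sheaves is zero.

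For properness of $Z\to Y$, I would argue locally on the target: properness is Zariski-local on $Y$. Over $S_i$, $Z\cap(S_i\times X)$ is a closed subscheme of $Z_i$, and $Z_i$ is proper over $S_i$, so it is proper over $S_i$. The maps $Z\cap(S_i\times X)\to S_i$ are exactly the restrictions of $Z\to Y$ over the open cover, so $Z\to Y$ is proper.

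The main obstacle is making the support argument rigorous in the derived setting. In particular, one must ensure that ``supported on $Z$'' can be tested on the classical truncation and on cohomology sheaves, and that the closed subscheme $Z$ makes sense inside the derived scheme $Y\times X$. To handle this I would invoke Lemma \ref{lem:proper-support-is-detected-on-the-classical-truncation}. It reduces the question to the restriction of $\mathscr{R}$ to $Y_{\text{red}}\times X$, a classical scheme, where the support of a complex with coherent cohomology is an honest closed subset and the above local-to-global properness argument applies directly.
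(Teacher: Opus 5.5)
Your proposal is essentially the paper's argument: take $Z$ to be the reduced support of $\mathscr{R}$, get closedness from coherence and flatness, deduce $\mathscr{R}|_{(Y\times X)\setminus Z}\cong 0$, and check properness affine-locally on $Y$ by noting that $Z$ over an affine $S\subset Y$ is a closed subscheme of the proper support supplied by the $S$-point $S\to Y\to \Coh^n_0(X)$. One correction: $Y$ is genuinely derived, so $\mathscr{R}|_{S_i\times X}$ need not be concentrated in a single degree. Flatness over $Y$ only gives, locally, $\mathrm{H}^{-j}(\mathscr{R})\cong \mathrm{H}^0(\mathscr{R})\otimes_{\mathrm{H}^0(\mathscr{O}_Y)}\mathrm{H}^{-j}(\mathscr{O}_Y)$, so all supports lie in $\operatorname{supp}\mathrm{H}^0(\mathscr{R})$, which still makes $Z$ closed. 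The paper sidesteps this by defining $Z$ as the support of the degree-$0$ coherent sheaf $\mathscr{R}|_{(Y\times X)^{\mathrm{cl}}}$ and using that restriction to the classical truncation is conservative on bounded-above complexes.
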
 
\begin{proof}
	Let $Z\subset (Y\times X)^{\text{cl}}\subset Y\times X$ be the subscheme of $Y\times X$ defined by equipping the classical support of $\mathscr{R}|_{(Y\times X)^{\text{cl}}}$ with the reduced induced subscheme structure. Note that $Z$ is closed because $\mathscr{R}$ is almost perfect and flat relative to $Y$; hence, its restriction to the classical truncation is a coherent sheaf in degree 0 and therefore has closed support. Since an almost perfect complex is 0 if it is 0 on the classical truncation,\footnote{If $B$ is a connective commutative algebra and $M$ is a bounded above $B$-module with highest nonzero cohomology module in degree $m$, then $\mathrm H^m(\mathrm H^0(B)\otimes _BM)\cong \mathrm H^m(M)$, and this is nonzero if and only if $\mathrm H^m(M)$ is. Thus, we see that restriction to the classical truncation is indeed conservative on bounded above complexes.} we see that
\begin{align*}
	\mathscr{R}|_{(Y\times X)\setminus Z}\cong 0.
\end{align*} 
It remains to check that $Z\to Y$ is proper, which is equivalent to checking that $Z\to Y^{\text{cl}}$ is proper. Since properness is affine-local on the target, it is sufficient to check that for $S$ an affine open of $Y^{\text{cl}}$, $\mathscr{R}|_{S \times X}$ has proper support relative to $S$. We observe that $Z_S=S\times_{Y^{\text{cl}}} Z$ is the classical support of $\mathscr{R}|_{S\times X}$ with its reduced induced subscheme structure (by \cite[Tag 056J]{stacks} applied to the cohomology sheaves of $\mathscr{R}|_{S\times X}$). Since $\mathscr{R}|_{S\times X}$ has proper support relative to $S$ via the composition $S\to Y\to \Coh^n_0(X)\subset \Coh_{\text{prop}}(X)$, there is a closed subscheme $Z'\subset S\times X$ proper over $S$ with $\mathscr{R}|_{(S\times X)\setminus Z'}\cong 0$; moreover, $Z_S$ is a closed subscheme of $Z '$ by definition. Thus, we see that $Z\to S$ is proper, which proves the claim.
\end{proof} 
\begin{cor}\label{cor:R-has-finite-support}
	There is a closed subscheme $Z\subset Y\times X$ such that $\mathscr{R}|_{(Y\times X)\setminus Z}\cong 0$ and the composition $Z\to Y\times X\to Y$ is finite.	
\end{cor}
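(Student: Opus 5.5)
We take $Z$ to be the closed subscheme constructed in the proof of Lemma \ref{lem:R-has-proper-support}: the classical support of $\mathscr{R}|_{(Y\times X)^{\text{cl}}}$ with its reduced induced structure. That lemma already gives $\mathscr{R}|_{(Y\times X)\setminus Z}\cong 0$ and that $Z\to Y$ is proper. Since a proper morphism with finite fibers is finite (e.g. \cite[Tag 02OG]{stacks}), it remains only to show that $Z\to Y$ is quasi-finite. Finiteness can be tested on $Y^{\text{cl}}$, since $Z$ is classical and maps through $Y^{\text{cl}}$.

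To show quasi-finiteness we check fibers over points. Let $y$ be a point of $Y^{\text{cl}}$ with residue field $\kappa(y)$, and pass to a geometric point $\bar y$. Taking the support of a coherent sheaf commutes with base change to a fiber, up to nilpotents: by Nakayama, $x\in \operatorname{Supp}\mathscr{F}$ if and only if $\mathscr{F}\otimes\kappa(x)\neq 0$. Hence the reduced fiber $Z_{\bar y}$ equals the support of $\mathrm H^0(\mathscr{R}|_{\bar y\times X})$.

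Since $\mathscr{R}$ is flat relative to $Y$, this restriction is the pullback of $\mathscr{U}$ along $\pi(\bar y)\in \Coh^n_0(X)(\bar y)$. It is therefore a length $n$ torsion coherent sheaf on $X_{\bar y}$, i.e. its Hilbert polynomial is the constant $n$. Such a sheaf has zero-dimensional support consisting of at most $n$ points. So every fiber of $Z\to Y^{\text{cl}}$ is finite, $Z\to Y$ is quasi-finite, and combined with properness $Z\to Y$ is finite.

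The main point of care is the identification of the fiber of the reduced support with the support of the fiberwise sheaf. This is a pointwise Nakayama statement for the degree-0 coherent sheaf $\mathrm H^0(\mathscr{R}|_{(Y\times X)^{\text{cl}}})$, which is flat over $Y^{\text{cl}}$. Everything else is already provided by Lemma \ref{lem:R-has-proper-support} and the definition of $\Coh^n_0(X)$.
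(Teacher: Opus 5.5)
Your proposal is correct and follows the paper's own argument: you take the reduced support $Z$ from the proof of Lemma \ref{lem:R-has-proper-support}, observe that its fibers over $Y$ are finite because they are supports of length-$n$ sheaves, and conclude that $Z\to Y$ is finite since it is proper and quasi-finite. The only difference is that you spell out the fiberwise support computation (Nakayama plus flatness over $Y^{\text{cl}}$), which the paper simply asserts as ``$Z$ has finite fibers over $Y$.''
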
 
\begin{proof}
	The scheme $Z$ in the proof of Lemma \ref{lem:R-has-proper-support} has finite fibers over $Y$; hence, $Z\to Y$ is quasi-finite. Using Lemma \ref{lem:R-has-proper-support}, we see that $Z\to Y$ is proper and quasi-finite, hence finite.
\end{proof} 
\begin{prop}\label{prop:R-is-free-of-rank-n}
 Denote $Y= \Quot^{n,\text{fr}}_{\mathscr{O}_X^n}(X)$, and let $p_Y:Y\times X\to Y$ be the projection. We then have
 \begin{align*}
 	p_{Y*}\mathscr{R}\cong \mathscr{O}^n_Y.
 \end{align*} 
\end{prop}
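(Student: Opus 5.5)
The isomorphism will come from the universal property of $Y=\Coh^n_0(X)\times_{\Vect}\pt$, since $Y$ is a fiber product over $\Vect$ along the pushforward map of Lemma \ref{lem:perf-pushforward}. The identity of $Y$ corresponds to a pair: a map $\pi:Y\to \Coh^n_0(X)$, classifying $\mathscr{R}=(\pi\times\id_X)^*\mathscr{U}$, together with an identification in $\Vect(Y)$ between the image of $\mathscr{R}$ under pushforward and the image of $k^n$ under $\pt\to\Vect$. The latter image is $\mathscr{O}^n_Y$. So the content of the proposition is that the pushforward natural transformation $\mathcal{X}_{\mathsf{QCoh}(X)}\to\Vect$, once extended from affines to the stack $Y$ by right Kan extension, is computed by $p_{Y*}$.

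The plan therefore has three steps.

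\textbf{Step 1: compute on affines.} For every affine $f:S\to Y$, the universal property gives an isomorphism $\varphi_f:\mathscr{O}^n_S\xrightarrow{\cong}p_{S*}\mathscr{R}_S$, where $\mathscr{R}_S=(f\times\id_X)^*\mathscr{R}$. These isomorphisms are compatible with maps $S'\to S$ over $Y$, since the identification in $\Vect$ is a natural transformation. Hence they assemble into an isomorphism in $\lim_{S\to Y}\mathsf{QCoh}(S)=\mathsf{QCoh}(Y)$ between $\mathscr{O}^n_Y$ and the object $(p_{S*}\mathscr{R}_S)_{S\to Y}$.

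\textbf{Step 2: identify that object with $p_{Y*}\mathscr{R}$.} This requires base change $f^*p_{Y*}\mathscr{R}\cong p_{S*}(f\times\id_X)^*\mathscr{R}$ for all affine $S\to Y$, functorially in $S$. I expect this to be the main obstacle, because $Y$ is a derived scheme rather than an affine. I would use two inputs:
\begin{itemize}
\item $p_Y:Y\times X\to Y$ is schematic and quasi-compact (indeed qcqs, as $X$ is quasi-projective), so base change for quasi-coherent pushforward holds by \cite[Chapter 3 Section 2.2]{GR}, exactly as in the proof of Theorem \ref{thm:coherent-reps};
\item $Y$ is a quasi-compact scheme with affine diagonal (Corollary \ref{cor:quot-is-a-scheme}), so $p_{Y*}$ can be computed Zariski-locally on $Y$ and the local pushforwards glue.
\end{itemize}
Together these identify $f^*p_{Y*}\mathscr{R}$ with $p_{S*}\mathscr{R}_S$ compatibly in $S$. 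The compatibility of these base change isomorphisms with the transition maps is the usual coherence of base change in the $(\infty,1)$-setting.

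As an alternative route for Step 2, Corollary \ref{cor:R-has-finite-support} lets me write $\mathscr{R}=i_*\mathscr{R}'$ for some closed $i:Z\hookrightarrow Y\times X$ with $Z\to Y$ finite. Then $p_{Y*}\mathscr{R}=(p_Y\circ i)_*\mathscr{R}'$ is a pushforward along a finite, hence affine, map, for which base change is immediate.

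\textbf{Step 3: conclude.} Combining Steps 1 and 2 gives $p_{Y*}\mathscr{R}\cong\mathscr{O}^n_Y$. Equivalently, $p_{Y*}\mathscr{R}$ is the pullback along $Y\to\pt$ of the universal vector space $k^n$, which is tautologically $\mathscr{O}^n_Y$.
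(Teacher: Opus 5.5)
Your proposal is correct and is essentially the paper's argument: the identity of $Y$ is a tautological point of the Kan-extended functor of points, giving compatible framings $\mathscr{O}^n_S\cong p_{S*}(y\times\id_X)^*\mathscr{R}\cong y^*p_{Y*}\mathscr{R}$ for affine $y:S\to Y$, the second isomorphism being base change along the cartesian square over $y$ (you are simply more explicit about assembling these in $\lim_{S\to Y}\mathsf{QCoh}(S)$ and about why base change holds). You should drop the alternative route through Corollary \ref{cor:R-has-finite-support}: the $Z$ it provides is a reduced subscheme of $(Y\times X)^{\text{cl}}$, so $\mathscr{R}$ is in general not of the form $i_*\mathscr{R}'$ for it (for non-classical $Y$ this would force $p_{Y*}\mathscr{R}\cong\mathscr{O}^n_Y$ to be pushed forward from $Y^{\text{cl}}$), and in any case that route is not needed.
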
 
\begin{rmk}
This proposition shows that $\mathscr{R}$ is the analog in the non-affine setting of the $(A,\mathbb{L}(A)_n)$-bimodule $(\mathbb{L}(A)_n)^n$ of Lemma \ref{lem:A-module-structure}.
\end{rmk} 
\begin{proof}
	By considering the Kan-extended functor of points of $Y$, which takes values on stacks, we see that the map $Y\to Y$ is a tautological point of $Y(Y) $; hence, there is a universal equivalence $\mathscr{O}^n_Y\cong p_{Y*}\mathscr{R}$. Indeed, given $y:S\to Y$, the composition
	\begin{align*}
		S\xrightarrow{y} Y\xrightarrow{\pi} \Coh^n_0(X)
	\end{align*} 
	thus classifies a sheaf on $S \times X$, and this sheaf corresponds to the sheaf on $S\times X$
	\begin{align*}
		 \mathscr{R} | _{S\times X}=(y\times \id_X)^*\mathscr{R}=((\pi\circ y)\times \id_X)^*\mathscr{U}
	\end{align*} 
	by universality of $\mathscr{U}$. Further, under pushforward along $p_S:S\times X\to S$, we have an isomorphism
	\begin{align*}
		p_{S*}\mathscr{R}|_{S\times X}\cong \mathscr{O}^n_S
	\end{align*} 
	by construction of $Y$. Now, consider the following cartesian square
	\[
	\begin{tikzcd}
		S\times X\arrow[r,"y\times \id_X"]\arrow[d,"p_S"]&Y\times X\arrow[d,"p_Y"]\\
		S\arrow[r,"y"]&Y
	\end{tikzcd}.
	\] 
Base change along this square gives an isomorphism
\begin{align*}
	p_{S*}(y\times \id_X)^*\cong y^*p_{Y*},
\end{align*} 
so that
\begin{align*}
p_{S*}\mathscr{R}|_{S\times X}\cong (p_{Y*}\mathscr{R})|_{S}\cong \mathscr{O}^n_S.
\end{align*} 
This thus identifies
\begin{align*}
	p_{Y*}\mathscr{R}\cong \mathscr{O}^n_Y.
\end{align*} 
\end{proof}

\subsection{The Universal Representation Defines a Domain Wall}
By Corollary \ref{cor:quot-is-a-scheme}, $Y= \Quot^{n,\text{fr}}_{\mathscr{O}_X^n}(X)$ is a quasi-compact scheme almost of finite type with affine diagonal; hence, it is perfect in the sense of \cite{BFN}. Since $X$ and $Y$ are perfect, we have equivalences
\begin{align*}
	\mathsf{QCoh}(Y\times X)\cong \mathsf{QCoh}(Y)\otimes \mathsf{QCoh}(X)\cong \mathsf{Map}_{\mathsf{Pr}^{\text{L}}}(\mathsf{QCoh}(X),\mathsf{QCoh}(Y)),
\end{align*} 
by e.g. \cite[Proposition 3.3.4.2]{GR}. Thus, the sheaf $\mathscr{R}$ defines a left adjoint functor
\begin{align*}
	\Phi_{\mathscr{R}}:\mathsf{QCoh}(X)\to& \mathsf{QCoh}(Y)\\
	\mathscr{F}\mapsto &p_{Y*}\left( p_X^*\mathscr{F}\otimes \mathscr{R} \right) ,
\end{align*} 
where $p_{Y}$ and $p_X$ are the corresponding projections from $Y\times X$. This functor abstractly has a right adjoint, but this right adjoint may not necessarily preserve colimits and hence may not be a right adjoint \textit{in the category} $\mathsf{Pr}^{\text{L}}$.
\begin{prop}\label{prop:R-is-a-domain-wall}
	Let $X$ be a quasi-projective classical scheme. Then the right adjoint of $\Phi_{\mathscr{R}}$ preserves colimits. 
\end{prop}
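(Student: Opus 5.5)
Our plan is to factor $\Phi_{\mathscr{R}}$ through the support of $\mathscr{R}$ and use that each factor has a continuous right adjoint. By Corollary \ref{cor:R-has-finite-support}, there is a closed subscheme $Z\subset Y\times X$ with $\mathscr{R}|_{(Y\times X)\setminus Z}\cong 0$ and $q\coloneqq p_Y\circ i:Z\to Y$ finite, where $i:Z\hookrightarrow Y\times X$ is the inclusion. The first reduction is to replace $\mathscr{R}$ by a sheaf living on an honest thickening of $Z$. Since $\mathscr{R}$ is almost perfect and set-theoretically supported on $Z$, each truncation $\tau^{\ge -m}\mathscr{R}$ is annihilated by a power of the ideal of $Z$ (via its finitely many cohomology sheaves). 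Hence $\tau^{\ge -m}\mathscr{R}\cong i_{m*}\mathscr{G}_m$ for some infinitesimal thickening $i_m:Z_m\hookrightarrow Y\times X$ of $Z$, with $Z_m\to Y$ still finite. We then write
\begin{align*}
	\Phi_{\mathscr{R}}(\mathscr{F})\cong \colim_m p_{Y*}(p_X^*\mathscr{F}\otimes \tau^{\ge -m}\mathscr{R}),
\end{align*}
or alternatively try to work directly with $\mathscr{R}$ itself, as explained below.

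The key steps, in order, are as follows. First, by the projection formula we have $p_{Y*}(p_X^*\mathscr{F}\otimes i_{m*}\mathscr{G}_m)\cong q_{m*}(i_m^*p_X^*\mathscr{F}\otimes \mathscr{G}_m)$, where $q_m=p_Y\circ i_m$. Second, $q_{m*}$ is affine pushforward, which is $t$-exact and colimit preserving. Its right adjoint $q_m^!$ is continuous, since a finite morphism between schemes almost of finite type has $q_m^!=\underline{\Hom}(q_{m*}\mathscr{O},-)$, and $q_{m*}\mathscr{O}_{Z_m}$ is almost perfect. Third, $i_m^*p_X^*$ has the continuous right adjoint $p_{X*}i_{m*}$: here $i_m$ is a closed immersion, and $p_X$ is a qcqs schematic map, so its pushforward commutes with colimits. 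Fourth, tensoring with $\mathscr{G}_m$ has right adjoint $\underline{\Hom}(\mathscr{G}_m,-)$, which is continuous when $\mathscr{G}_m$ is perfect, but only continuous on bounded-below objects in general.

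We therefore expect the main obstacle to be that $\mathscr{R}$ is merely almost perfect, not perfect, so $\underline{\Hom}(\mathscr{R},-)$ need not commute with colimits. Our first attempt to get around this avoids Hom out of $\mathscr{R}$ altogether. We would compute the right adjoint directly as
\begin{align*}
	\Phi_{\mathscr{R}}^R(\mathscr{G})\cong p_{X*}\underline{\Hom}(\mathscr{R},p_Y^!\mathscr{G}),
\end{align*}
and use Proposition \ref{prop:R-is-free-of-rank-n}, together with $Y$-flatness, to get $\mathscr{R}\cong i_*\mathscr{R}'$ where $\mathscr{R}'$ is a $Y$-flat sheaf on a finite $Y$-scheme $Z'$ with $q_*\mathscr{R}'\cong \mathscr{O}_Y^n$. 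Then $\Phi_{\mathscr{R}}(\mathscr{F})=q_*(\mathscr{R}'\otimes i^*p_X^*\mathscr{F})$. Because $q$ is affine, this reduces, locally on $Y=\Spec B$ and $X$ affine $=\Spec A$, to the module-theoretic statement $\mathscr{F}\mapsto \mathscr{F}\otimes_A M$ with $M\cong B^n$ as a $B$-module. This is exactly the situation of Lemma \ref{lem:A-module-structure} and Proposition \ref{prop:dual}: $M$ is perfect as a right $B$-module, so the bimodule is right dualizable. Its right adjoint is $-\otimes_B M^R$, which is continuous.

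It remains to globalize. Continuity of the right adjoint can be checked Zariski-locally on $Y$, since restriction to opens is continuous and jointly conservative. It can also be checked after Zariski-localizing $X$, because the support $Z$ is finite over $Y$ and hence, locally on $Y$, contained in an affine open of the quasi-projective $X$. Gluing these local descriptions via base change, as in Lemma \ref{lem:perf-pushforward}, would then finish the proof.
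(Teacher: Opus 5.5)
Your final argument is correct, and it takes a genuinely different route from the paper's. That argument is: localize on $Y$ so that the finite support of $\mathscr{R}$ lies in an affine $V\times U$, using quasi-projectivity to put the finitely many points over $y$ into an affine $U$, then shrinking $V$ because $Z\to Y$ is closed. This gives $\mathscr{R}|_{V\times X}\cong j_*(\mathscr{R}|_{V\times U})$ and $\Phi_{\mathscr{R}}(\mathscr{F})|_V\cong\Gamma(U,\mathscr{F})\otimes_A M$ with $M\cong B^n$ over $B$ by Proposition \ref{prop:R-is-free-of-rank-n}. Then invoke Proposition \ref{prop:dual}.

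The paper instead works globally. It uses compact generation, so it only needs $\Phi_{\mathscr{R}}$ to send perfect complexes to perfect complexes. It gets almost perfectness of $p_{Y*}(p_X^*\mathscr{P}\otimes\mathscr{R})$ from proper support via \cite[Proposition 2.28]{PS}. It bounds the Tor amplitude using $Y$-flatness of $\mathscr{R}$, the projection formula, and $t$-exactness of $p_{Y*}$ on finitely supported sheaves. The paper never uses the framing, so its argument applies to any base-flat almost perfect family with finite support. Your route does use the framing, but it avoids \cite{PS}. It also identifies the right adjoint locally as $\Hom_B(M,-)\cong -\otimes_B M^R$, which is exactly what Theorem \ref{thm:stack-domain-wall} needs.

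There are two caveats.

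First, discard the factorization route in your opening paragraphs, because several of its claims are false:
\begin{enumerate}
\item $\mathscr{R}\simeq\lim_m\tau^{\ge -m}\mathscr{R}$ is a limit, not a colimit.
\item A complex whose cohomology sheaves are killed by a power of an ideal need not be pushed forward from a thickening.
\item The claim that $q^!=\underline{\Hom}(q_*\mathscr{O},-)$ is continuous when $q_*\mathscr{O}$ is merely almost perfect is wrong. For $q:\Spec k\hookrightarrow\Spec k[\epsilon]/(\epsilon^2)$, $k$ is not perfect over $k[\epsilon]/(\epsilon^2)$, so $\Hom_{k[\epsilon]}(k,-)$ does not commute with direct sums.
\end{enumerate}
The individual factors really can fail to have continuous right adjoints; only the composite is good, because $M$ is perfect over $B$. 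The claim $\mathscr{R}\cong i_*\mathscr{R}'$ for a finite $Z'$ is likewise unjustified, and you don't need it.

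Second, joint conservativity of restrictions on $\mathsf{QCoh}(Y)$ is not by itself why continuity is Zariski-local on $Y$, since the right adjoint lands in $\mathsf{QCoh}(X)$. Either of the following works instead:
\begin{enumerate}
\item Note that $\Phi_{\mathscr{R}}(\mathscr{P})$ is perfect iff each restriction to $V$ is, and $\Gamma(U,\mathscr{P})\otimes_A M$ lies in the thick subcategory generated by $M\cong B^n$.
\item Write $\mathscr{G}$ as a finite \v{C}ech limit of the $j_{I*}j_I^*\mathscr{G}$, and use that finite limits commute with colimits.
\end{enumerate}
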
 
\begin{proof}
	It is sufficient to show that $\Phi_{\mathscr{R}}$ preserves compact objects. Since $X$ and $Y$ are perfect, so is the map $p_Y:Y\times X\to Y$ by \cite[Corollary 3.23]{BFN}. In particular, $p_Y$ satisfies the projection formula by \cite[Proposition 3.10]{BFN}. Further, the compact objects of $\mathsf{QCoh}(Z)$ for a perfect stack $Z$ with affine diagonal are precisely the perfect complexes $\mathsf{Perf}(Z)$ (\cite[Proposition 3.9]{BFN}). Thus, we need to show that given a perfect complex $\mathscr{P}$ on $X$, $\Phi_{\mathscr{R}}(\mathscr{P})=p_{Y*}(p_X^*\mathscr{P}\otimes \mathscr{R})$ is also perfect. Denote $\mathscr{E}\coloneqq p_X^*\mathscr{P}\otimes \mathscr{R}$. Since by Lemma \ref{lem:R-has-proper-support} $\mathscr{R}$ has proper support relative to $Y$, $\mathscr{E}$ has proper support relative to $Y$ and is almost perfect; hence, $p_{Y*}\mathscr{E}$ is almost perfect by \cite[Proposition 2.28]{PS}. It thus suffices to check that
	\begin{align*}
		p_{Y*}\mathscr{E}\otimes \mathscr{F}
	\end{align*} 
	has universally bounded cohomology for any $\mathscr{F}\in \mathsf{QCoh}(Y)^\heartsuit$. It must be bounded from above. To see it is bounded below, write
	\begin{align*}
		p_{Y*}\mathscr{E}\otimes \mathscr{F}=p_{Y*}(\mathscr{E}\otimes p_Y^*\mathscr{F})
	\end{align*} 
by the projection formula. By assumption, $\mathscr{E}\otimes p_Y^*\mathscr{F}$ has cohomology sheaves in a finite range because $\mathscr{P}$ is perfect. Since $\mathscr{R}$ has finite support $Z\to Y\times X$ by Corollary \ref{cor:R-has-finite-support}, so does $\mathscr{E}\otimes p_{Y}^*\mathscr{F} $. Therefore, $p_{Y*}$ is $t$-exact on $\mathscr{E}\otimes p_Y^*\mathscr{F}$.
\end{proof} 

The above proposition shows that $\Phi_{\mathscr{R}}$ is actually a 1-morphism in $(\mathsf{Pr}^L)^{\text{cont}}$, which is the $(\infty,2)$-subcategory of $\mathsf{Pr}^L$ of dualizable objects and 1-morphisms whose adjoints are also morphisms. Thus, we are in the setting of \cite[\S3.4]{nonlinear-trace}, so there is a well-defined map
\begin{align*}
	\mathrm H\mathrm H_*(X)\to \mathrm H\mathrm H_*(Y)
\end{align*} 
induced by the 1-morphism $\Phi_{\mathscr{R}}$, where for a stack $Z$, $\mathrm H\mathrm H_*(Z)\coloneqq \mathrm H\mathrm H_*(\mathsf{QCoh}(Z))$. In other words, $\Phi_{\mathscr{R}}$ is a ``domain wall'' in the sense of \S\ref{sec:domain-walls}. Constant loops are a map $Y\to \mathscr{L}Y$, hence they induce a map
\begin{align*}
	\mathrm H\mathrm H_*(Y)=\mathscr{O}(\mathscr{L}Y)\to \mathscr{O}(Y).
\end{align*} 
By composing this with the induced map on Hochschild homologies above, we have thus obtained a map
\begin{align*}
	\mathrm H\mathrm H_*(X)\to \mathscr{O}(Y).
\end{align*} 
Moreover, this composition is equivariant with respect to the natural $S^1$-actions (because each of the composed maps is by \cite[Section 2.3]{HSS}), so it descends to a map
\begin{align*}
	\mathrm H\mathrm C_*(X)\to \mathscr{O}(Y).
\end{align*} 
The next theorem shows that in the case of $X=\Spec A$ of finite presentation, this is the trace map of \cite{BKR}.
\begin{thm}\label{thm:stack-domain-wall}
	Let $X=\Spec A$ be an affine scheme of finite presentation. Then $\mathscr{R}=(\mathbb{L}(A)_n)^n$ as an $(A\otimes \mathbb{L}(A)_n)$-module. Moreover, the pair of functors it defines,
	\begin{align*}
	\Phi_{\mathscr{R}}:	\mathsf{QCoh}(X)\leftrightarrows \mathsf{QCoh}(\Quot^{n,\text{fr}}_{\mathscr{O}_X^n}(X)):\Phi_{\mathscr{R}}^R,
	\end{align*} 
	are precisely those defined by the domain wall $(\mathbb{L}(A)_n)^n$ in \S\ref{sec:trace-maps-I}.
\end{thm}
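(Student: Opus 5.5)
We identify everything on the affine side and then compare the universal objects. Write $Y=\Quot^{n,\text{fr}}_{\mathscr{O}_X^n}(X)$ and $B=\mathbb{L}(A)_n$. By Proposition \ref{prop:quot-scheme-is-reps} together with Theorem \ref{thm:rep-on-affines}, $Y\cong \Rep_n(A)=\Spec B$. Hence $Y\times X\cong \Spec(A\otimes B)$, and $\mathscr{R}$ is an $(A\otimes B)$-module, i.e. an $(A,B)$-bimodule. Proposition \ref{prop:R-is-free-of-rank-n} gives $p_{Y*}\mathscr{R}\cong B^n$ as $B$-modules. It therefore remains to identify the $A$-action on $B^n$ coming from $\mathscr{R}$ with the one of Lemma \ref{lem:A-module-structure}, namely the unit $\eta:A\to M_n(B)$ (after replacing $A$ by a cofibrant $QA$, as in the proof of Theorem \ref{thm:rep-on-affines}).

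\textbf{Key step.} Track the tautological point. The identity $\Spec B\to Y$ is a $B$-point of $\Rep_n(A)$. Under the equivalence of Theorem \ref{thm:rep-on-affines}, Lurie's identification
\begin{align*}
\Map_{\mathsf{Alg}}(A,M_n(B))\cong \mathcal{X}_{A\Mod}(\Spec B)\times_{\Vect(\Spec B)}\{B^n\}
\end{align*}
sends an algebra map $\varphi$ to the pair consisting of $B^n$ with $A$ acting through $\varphi$ and the identity framing. The identity point of $\Rep_n(A)$ corresponds to $\id_{B}$ in $\Hom_{\mathsf{CAlg}}(B,B)$, and under the adjunction $(\mathbb{L}(-)_n,M_n)$ this is exactly $\eta$. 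On the other side, the tautological $Y$-point of $Y$ is, by the construction of $\mathscr{R}$ via $\mathscr{U}$ (see the proof of Proposition \ref{prop:R-is-free-of-rank-n}), the pair $(\mathscr{R},\mathscr{O}_Y^n\cong p_{Y*}\mathscr{R})$. Since the isomorphism $Y\cong\Rep_n(A)$ is an isomorphism of functors of points, it matches tautological points, which yields $\mathscr{R}\cong B^n$ with $A$-action given by $\eta$.

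\textbf{Main obstacle.} The subtle step is checking that the identification $\Coh^n_0(X)\times_{\Vect}\pt\hookrightarrow \mathscr{Rep}_n(A\Mod)$ is compatible with how $\mathscr{U}$ is defined. This requires two facts. First, the forgetful map to $\Vect$ used for $\mathsf{QCoh}(\Spec A)$ (pushforward, Lemma \ref{lem:perf-pushforward}) agrees with restriction of scalars for $A\Mod$. This holds because $p_{S*}$ for the affine map $\Spec(A\otimes B)\to\Spec B$ is restriction. Second, the Kan-extended universal object restricts on $\Spec B$ to the object classified by the identity. This is formal, since $Y$ is affine, so the limit defining $\mathscr{R}$ is computed at the terminal object $\id:\Spec B\to Y$ of the indexing diagram.

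\textbf{Functors.} With $\mathscr{R}\cong M:=B^n$ as an $(A,B)$-bimodule, the functors agree. Using $\mathsf{QCoh}(\Spec C)=C\Mod$ and the fact that $p_{Y*}$ is restriction along $B\to A\otimes B$, we get
\begin{align*}
\Phi_{\mathscr{R}}(K)=p_{Y*}(p_X^*K\otimes\mathscr{R})=(K\otimes B)\otimes_{A\otimes B}M\cong K\otimes_A M,
\end{align*}
which is the functor $M_{\pt}$ of \S\ref{sec:trace-maps-I}. Right adjoints are unique, so $\Phi_{\mathscr{R}}^R\cong -\otimes_B M^R=\Hom_B(M,-)$, using Proposition \ref{prop:dual}. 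Consequently the induced Hochschild maps coincide, and by Theorem \ref{thm:trace} the resulting map $\mathrm{HC}_*(X)\to\mathscr{O}(Y)$ is the trace map of \cite{BKR}.
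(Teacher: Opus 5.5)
Your proposal is correct and follows essentially the same route as the paper: you identify $Y\times X$ with $\Spec(A\otimes\mathbb{L}(A)_n)$ via Theorem~\ref{thm:rep-on-affines} and Proposition~\ref{prop:quot-scheme-is-reps}, get freeness of rank $n$ from Proposition~\ref{prop:R-is-free-of-rank-n}, match the $A$-action with the unit $\eta$ of Lemma~\ref{lem:A-module-structure} through the functor-of-points identification, and compute $\Phi_{\mathscr{R}}$ and its right adjoint directly. Your evaluation at the tautological point $\id:\Spec\mathbb{L}(A)_n\to Y$ is a sharper phrasing of the paper's argument that the maps $QA\otimes B\to M_n(B)$ are natural in $B$ and so assemble into $QA\otimes\mathbb{L}(A)_n\to M_n(\mathbb{L}(A)_n)$; your explicit checks (that pushforward along $\Spec(A\otimes B)\to\Spec B$ is restriction of scalars, and that the Kan-extended universal object is computed at the terminal affine over $Y$) fill in details the paper leaves implicit.
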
 
\begin{proof}
	Consider $\Quot^{n,\text{fr}}_{\mathscr{O}_X^n}(X)\times X$, which is isomorphic to $\Rep_n(A)\times X$ by Theorem \ref{thm:rep-on-affines} and Proposition \ref{prop:quot-scheme-is-reps}. By Proposition \ref{prop:R-is-free-of-rank-n}, we see immediately that as an $\mathbb{L}(A)_n$-module, $\mathscr{R}$ is free of rank $n$. We would now like to show that the $A$-module structure is precisely the one arising from Lemma \ref{lem:A-module-structure}. For $S=\Spec B$, an $S$-point of $\Rep _n(A)$ is precisely a map from $QA\otimes B\to \End_B((s\times \id_X)^*\mathscr{R})= M_n(B)$, where
	\begin{align*}
		(s\times \id_X)^*\mathscr{R}=A\otimes B\otimes _{\mathbb{L}(A)_n\otimes A}(\mathbb{L}(A)_n)^n=B^n.
	\end{align*} 
	We claim that the maps
	\begin{align*}
		QA\to M_n(B)
	\end{align*} 
	are natural in $B$; hence, they give a map of $\mathbb{L}(A)_n$-algebras
	\begin{align*}
		QA\otimes \mathbb{L}(A)_n\to M_n(\mathbb{L}(A)_n).
	\end{align*} 
	Indeed, this follows from naturality of the universal sheaf $\mathscr{R}$ with respect to pullback. The $A$-module structure is then induced under $A\Mod=QA\Mod$. We next claim that this module structure is the ``universal representation'' defined in the proof of Lemma \ref{lem:A-module-structure}. We observe that the $QA$-action on the restriction of $\mathscr{R}$ to $\Spec(B)$-points of $\Rep_n(A)$ is the action of $QA$ on $M_n(B)$ by construction, which is by definition the same action as that in Lemma \ref{lem:A-module-structure}.

	Lastly, we identify the functors defined by $\mathscr{R}$. The functor
	\begin{align*}
		\mathsf{QCoh}(X)\to \mathsf{QCoh}(\Rep_n(A))
	\end{align*} 
	sends an $A$-module $M$ to
	\begin{align*}
		\Phi_{\mathscr{R}}(M)=&M\otimes _A (A\otimes B)\otimes _{A\otimes B} \mathscr{R}\\
		=&M\otimes _A \mathscr{R}.
	\end{align*} 
	The functor in the reverse direction (which is the right adjoint) sends an $\mathbb{L}(A)_n$-module $N$ to
	\begin{align*}
		N\otimes _{\mathbb{L}(A)_n}\mathscr{R}^R=N\otimes _{\mathbb{L}(A)_n}\Hom_{\mathbb{L}(A)_n}(\mathscr{R},\mathbb{L}(A)_n)\cong N^n.
	\end{align*} 
	These are the functors identified in \S\ref{sec:domain-walls}.
\end{proof} 

\section{The Shifted Symplectic Structure}
\label{sec:symplectic-structure}
For $X$ a smooth classical quasi-projective scheme, we will show that there is a $\GL_n$ action on $Y=\Quot^{n,\text{fr}}_{\mathscr{O}_X^n}(X)$ such that the quotient stack $Y /\GL_n$ admits a natural shifted symplectic structure (see \cite{PTVV} for what this means). In the case of $X=\Spec A$ with $\dim X=2$, this structure generalizes the 0-shifted symplectic structure constructed on the $\GL_n$ character variety of a genus $1$ surface by Goldman in \cite{gold}.
\begin{prop}\label{prop:quot-scheme-is-torsor}
	The derived framed locus of the Quot scheme of points has a presentation
	\begin{align*}
		\Quot^{n,\text{fr}}_{\mathscr{O}_X^n}(X)\cong\Coh^n_0(X)\times _{B\GL_n}\pt,
	\end{align*} 
	where $\Coh^n_0(X)\to B\GL_n$ classifies the pushforward of the universal family $\mathscr{U}$ to $\Coh^n_0(X)$. In particular, the projection
	\begin{align*}
		\Quot^{n,\text{fr}}_{\mathscr{O}^n_X}(X)\to \Coh^n_0(X)
	\end{align*} 
	is a $\GL_n$-torsor.
\end{prop}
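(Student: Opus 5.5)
The plan is to compare the two fiber products over $\Coh^n_0(X)$ directly. By definition $\Quot^{n,\text{fr}}_{\mathscr{O}^n_X}(X)=\Coh^n_0(X)\times_{\Vect}\pt$, where $\Coh^n_0(X)\to\Vect$ sends $\mathscr{E}\mapsto p_{S*}\mathscr{E}$ and $\pt\to\Vect$ classifies $k^n$.

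\textbf{Step 1: factor the map to $\Vect$ through $B\GL_n$.} We show that $\Coh^n_0(X)\to\Vect$ factors through the substack $\Vect^{\text{lf}}_n\subset\Vect$ of locally free rank $n$ modules concentrated in degree $0$, and that $\Vect^{\text{lf}}_n\simeq B\GL_n$.

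For the factorization, let $\mathscr{E}\in\Coh^n_0(X)(S)$. The proof of Theorem~\ref{thm:coherent-reps} shows that $p_{S*}\mathscr{E}$ is perfect and compatible with base change. Corollary~\ref{cor:R-has-finite-support}, or rather its proof applied to a general $S$-point, shows that $\mathscr{E}$ has support finite over $S$, so $p_{S*}$ is $t$-exact on $\mathscr{E}\otimes p_S^*M$ for $M\in\mathsf{QCoh}(S)^\heartsuit$. Combined with the projection formula, this makes $p_{S*}\mathscr{E}$ of tor amplitude $[0,0]$, hence a flat perfect module, hence locally free. Its rank is read off on geometric points, where it is the length, namely $n$.

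For the identification $\Vect^{\text{lf}}_n\simeq B\GL_n$: both are stacks, and the space of locally free rank $n$ modules on $S$ is étale (indeed Zariski) locally connected with automorphism group $\GL_n(S)$. This gives the equivalence with $B\GL_n$, whose $S$-points are $\GL_n$-torsors, via the frame bundle. Under this identification the map $\Coh^n_0(X)\to B\GL_n$ is the one classifying $p_*\mathscr{U}$, by the base change used in Proposition~\ref{prop:R-is-free-of-rank-n}.

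\textbf{Step 2: replace $\Vect$ by $B\GL_n$ in the fiber product.} Since $\Vect^{\text{lf}}_n\hookrightarrow\Vect$ is a monomorphism of stacks (an inclusion of components of maximal subgroupoids, so its diagonal is an equivalence), we have
\[\Coh^n_0(X)\times_{\Vect}\pt\simeq\Coh^n_0(X)\times_{\Vect^{\text{lf}}_n}\pt\simeq \Coh^n_0(X)\times_{B\GL_n}\pt,\]
because $\pt\to\Vect$ also factors through $\Vect^{\text{lf}}_n$, classifying the trivial bundle $k^n$.

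\textbf{Step 3: conclude the torsor statement.} The map $\pt\to B\GL_n$ is the universal $\GL_n$-torsor, and torsors are stable under base change. Hence the projection $\Quot^{n,\text{fr}}_{\mathscr{O}^n_X}(X)\to\Coh^n_0(X)$ is a $\GL_n$-torsor. Concretely, $\GL_n$ acts by changing the framing $\mathscr{O}^n_S\xrightarrow{\cong}p_{S*}\mathscr{E}$.

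\textbf{Main obstacle.} The hard step is Step 1's claim that $p_{S*}\mathscr{E}$ lies in the heart and is locally free of rank exactly $n$ for a derived $S$, not just a classical one. I would prove this by combining tor amplitude $0$ of $\mathscr{E}$ relative to $S$ with the finiteness of its support, which gives $t$-exactness of $p_{S*}$ after tensoring with discrete modules. Perfectness from Theorem~\ref{thm:coherent-reps} then upgrades "flat" to "locally free". The rank is computed on geometric fibers, using the definition of $\Coh^n_0(X)$ by Hilbert polynomial $n$.
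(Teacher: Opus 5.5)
Your proposal is correct and follows essentially the same route as the paper. Both show that $p_*\mathscr{U}$ is locally free of rank $n$, so that the pushforward map factors through the monomorphism $B\GL_n\hookrightarrow\Vect$; this identifies the two fiber products, and the torsor statement then follows by base change from $\pt\to B\GL_n$. The only minor difference is in the local-freeness step: the paper combines perfectness with the pointwise computation $f^*(p_*\mathscr{U})\otimes k(s)\cong\Gamma(X_s,\mathscr{E}_s)\cong k(s)^n$ in degree $0$, whereas you obtain tor amplitude $[0,0]$ for all discrete coefficients at once, from the relative tor amplitude of $\mathscr{E}$ and the finiteness of its support.
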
 
\begin{proof}
	Let $p:\Coh^n_0(X)\times X\to \Coh^n_0(X)$ be the projection, and denote $\mathscr{V}\coloneqq p_*\mathscr{U}$, where $\mathscr{U}$ is the universal sheaf of Section \ref{sec:universal-sheaves}. Note that $p_*\mathscr{U}$ is perfect. In fact, we claim that $\mathscr{V}$ is locally free of rank $n$. It suffices to show that given an $S$-point $f:S\to \Coh^n_0(X)$, $f^*\mathscr{V}$ is locally free of rank $n$. The map $p$ is schematic quasi-compact; indeed, it is the base change of such a map. Thus, by \cite[Chapter 3 Proposition 2.2.2]{GR}, base change holds for the diagram
	\[
	\begin{tikzcd}
		S\times X\arrow[r,"f\times \id_X"]\arrow[d,"p_S"]&\Coh^n_0(X)\times X\arrow[d,"p"]\\
		S\arrow[r,"f"]&\Coh^n_0(X)
	\end{tikzcd}.
	\] 
This allows us to write
\begin{align*}
	f^*\mathscr{V}\cong p_{S*}(f\times \id_X)^*\mathscr{U}.
\end{align*} 
By construction of the universal family, $f^*\mathscr{V}$ is precisely the pushforward along $p_{S*}$ of the sheaf on $S\times X$ classified by $f$, which we denote by $\mathscr{E}$. At any geometric point $s$ of $S$, $f^*\mathscr{V}\otimes k(s)\cong\Gamma(X_s,\mathscr{E}_s)\cong k(s)^n$ in degree 0, since $\mathscr{E}_s$ has length $n$. Thus, the bundle $\mathscr{V}$ defines a map
\begin{align*}
	\Coh^n_0(X)\to B\GL_n.
\end{align*} 
Since $B\GL_n\hookrightarrow \Vect$ is a monomorphism in $\mathsf{Stk}$ and $\mathscr{V}$ factors through it, we immediately have an identification
\begin{align*}
	\Coh^n_0(X)\times _{B\GL_n}\pt\cong\Coh^n_0(X)\times _{\Vect}\pt\eqcolon \Quot^{n,\text{fr}}_{\mathscr{O}_X^n}(X)
\end{align*} 
as desired.
\end{proof} 
The stack $\Coh^n_0(X)$ should be thought of as the analog of the ($\GL_n$) character stack when $X$ is not affine. Because
\begin{align*}
	\Coh^n_0(X)\subset \Coh_{\text{prop}}(X)
\end{align*} 
is an open and closed substack, its inclusion into $\Coh_{\text{prop}}(X)$ is formally \'etale. If $X$ is furthermore smooth, there is a map
\begin{align*}
	\Coh_{\text{prop}}(X)\to \Perf_{\text{prop}}(X) \cong \APerf_{\text{prop}}(X)\times _{\APerf(X)}\Perf(X),
\end{align*} 
which exists by \cite[Corollary 2.19]{PS}, and this map is formally \'etale because it is the base change of a formally \'etale map by \textit{loc. cit.} Thus, the composition
\begin{align*}
	\Coh^n_0(X)\to \Perf_{\text{prop}}(X)
\end{align*} 
is formally \'etale. By \cite{BD}, when $X$ is Gorenstein and Calabi-Yau of dimension $d$, the moduli stack of perfect complexes with proper support has a natural $(2-d)$-shifted symplectic structure. We have thus nearly obtained the following.
\begin{thm}\label{thm:symplectic-structure}
	Let $X$ be a classical, smooth, quasi-projective scheme of dimension $d$ equipped with a trivialization of its canonical bundle $\mathscr{O}_X\cong K_X$. Then the stack $\Coh^n_0(X) \cong \Quot^{n,\text{fr}}_{\mathscr{O}_X^n}(X) /\GL_n$ has a natural $(2-d)$-shifted symplectic structure.
\end{thm}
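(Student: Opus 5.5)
The strategy is to pull back the $(2-d)$-shifted symplectic form $\omega$ on $\Perf_{\text{prop}}(X)$ of \cite{BD} along the composite
\begin{align*}
	f:\Coh^n_0(X)\hookrightarrow \Coh_{\text{prop}}(X)\to \Perf_{\text{prop}}(X),
\end{align*}
and then check that the pulled-back closed 2-form is nondegenerate. The identification $\Coh^n_0(X)\cong \Quot^{n,\text{fr}}_{\mathscr{O}_X^n}(X)/\GL_n$ needs no separate argument. By Proposition \ref{prop:quot-scheme-is-torsor}, $\Quot^{n,\text{fr}}_{\mathscr{O}_X^n}(X)\to \Coh^n_0(X)$ is a $\GL_n$-torsor, so the quotient of the total space by $\GL_n$ recovers the base. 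The trivialization $\mathscr{O}_X\cong K_X$ provides the orientation $[X]$ that \cite{BD} (equivalently \cite[Theorem D]{BD-is-PTVV}) needs in the smooth, non-proper case. Pullback of closed forms along any map of derived stacks gives a closed 2-form $f^*\omega$ of degree $2-d$ on $\Coh^n_0(X)$.

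Nondegeneracy has two parts.
\begin{itemize}
	\item \textbf{Cotangent complexes.} The map $f$ is formally \'etale: the inclusion $\Coh^n_0(X)\subset\Coh_{\text{prop}}(X)$ is open and closed, and $\Coh_{\text{prop}}(X)\to \Perf_{\text{prop}}(X)$ is a base change of a formally \'etale map \cite[Corollary 2.19]{PS}. Hence $\mathbb{L}_f\simeq 0$, so $f^*\mathbb{L}_{\Perf_{\text{prop}}(X)}\xrightarrow{\sim}\mathbb{L}_{\Coh^n_0(X)}$, and dually $\mathbb{T}_{\Coh^n_0(X)}\simeq f^*\mathbb{T}_{\Perf_{\text{prop}}(X)}$.
	\item \textbf{Nondegeneracy.} The map $\mathbb{T}_{\Coh^n_0(X)}\to \mathbb{L}_{\Coh^n_0(X)}[2-d]$ induced by $f^*\omega$ is the $f$-pullback of the equivalence $\mathbb{T}_{\Perf_{\text{prop}}(X)}\to\mathbb{L}_{\Perf_{\text{prop}}(X)}[2-d]$, conjugated by the equivalences above. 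It is therefore an equivalence.
\end{itemize}
Concretely, at a point $\mathscr{E}$ the tangent complex is $\Hom_X(\mathscr{E},\mathscr{E})[1]$. The pairing is the Serre-duality pairing built from the compactly supported trace $\Hom(\mathscr{E},\mathscr{E}\otimes K_X)[d]\to k$ and the trivialization of $K_X$. This is consistent because $\mathscr{E}$ is perfect, by smoothness of $X$, and has proper support.

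The main obstacle is making the cotangent-complex comparison rigorous. $\Coh^n_0(X)$ is 1-Artin and laft, and $\mathbb{L}_{\Coh^n_0(X)}$ is a priori only almost perfect. We need $f$ to be formally \'etale in the strong sense $\mathbb{L}_f\simeq 0$, not only on classical points. To get this I would argue that $\Coh(X)\to\APerf(X)$ and $\Perf(X)\to\APerf(X)$ are open immersions on the relevant loci, by the flatness conditions on $\Coh$ and smoothness of $X$. Then $\Coh_{\text{prop}}(X)\to\Perf_{\text{prop}}(X)$ is an open immersion of derived stacks: its source is cut out by the open condition ``tor amplitude $0$ relative to $S$'' inside $\Perf_{\text{prop}}(X)$. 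Open immersions have vanishing relative cotangent complex, which gives $\mathbb{L}_f\simeq 0$ directly. A consequence is that $\mathbb{L}_{\Coh^n_0(X)}$ is perfect, so the symplectic condition makes sense.

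A secondary check is that the form of \cite{BD} on $\Perf_{\text{prop}}(X)$ is built on exactly the stack $\APerf_{\text{prop}}(X)\times_{\APerf(X)}\Perf(X)$ used here. For smooth $X$, $\Perf(X)\to\APerf(X)$ is an equivalence on bounded objects, so these agree. I would record this comparison as a remark.
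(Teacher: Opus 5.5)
Your proposal is correct and is essentially the paper's proof. You pull back the Brav--Dyckerhoff form $\omega$ along the formally \'etale composite $f:\Coh^n_0(X)\hookrightarrow\Coh_{\text{prop}}(X)\to\Perf_{\text{prop}}(X)$, then use $\mathbb{L}_f\simeq 0$ to identify the tangent and cotangent complexes with $f$-pullbacks, so nondegeneracy of $f^*\omega$ is inherited from $\omega$. Your open-immersion justification of $\mathbb{L}_f\simeq 0$ is extra care; the paper just cites \cite[Corollary 2.19]{PS}.

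One small correction to your side remark: smoothness of $X$ is not the right reason there. $\APerf_{\text{prop}}(X)\times_{\APerf(X)}\Perf(X)$ is the stack of perfect complexes with proper support tautologically, since both maps to $\APerf(X)$ are monomorphisms. Moreover, ``perfect $=$ bounded almost perfect'' fails on $X\times S$ for non-regular $S$, even when $X$ is smooth.
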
 
\begin{proof}
	Denote the symplectic structure on $\Perf_{\text{prop}}(X)$ by $\omega$, and let
	\begin{align*}
	f:	\Coh^n_0(X)\to \Perf_{\text{prop}}(X)
	\end{align*} 
	be the map constructed above. Note that $f^*\omega$ is a closed 2-form of degree $2-d$ on $\Coh^n_0(X)$, and we need only show that it is nondegenerate. By nondegeneracy of $\omega$, the induced map
	\begin{align*}
		T_{\Perf_{\text{prop}}(X)}\to \mathbb{L}_{\Perf_{\text{prop}}(X)}[2-d]	
	\end{align*} 
	is an isomorphism. By the above, $f$ is formally \'etale. Since the relative cotangent complex of a formally \'etale map vanishes, we find that
	\begin{align*}
		T_{\Coh^n_0(X)}=f^*T_{\Perf_{\text{prop}}(X)}
	\end{align*} 
	and
	\begin{align*}
		\mathbb{L}_{\Coh^n_0(X)}=f^*\mathbb{L}_{\Perf_{\text{prop}}(X)}.
	\end{align*} 
Thus, the map
\begin{align*}
	T_{\Coh^n_0(X)}\to \mathbb{L}_{\Coh^n_0(X)}[2-d]
\end{align*} 
induced by $f^*\omega$ is an isomorphism as well.
\end{proof} 

\appendix
\section{Comparison with Existing Constructions}
\subsection{Quot Schemes}
Recall the construction of \cite[Definition 3.3]{quot}, denoted by $\Quot ^{\text{Adh}}_{\mathscr{O}_X^n}(X)$, where $X$ is a nonsingular quasi-projective scheme. For such $X$, define an open subfunctor of this scheme, $\Quot ^{\text{Adh,}n,\text{fr}}_{\mathscr{O}^n_X}(X)\subset \Quot ^{\text{Adh}}_{\mathscr{O}^n_X}(X)$, whose $S$-points are length $n$ quotients
\begin{align*}
	\mathscr{O}^n_{X\times S}\to \mathscr{E}
\end{align*} 
for which the adjoint map, under the adjunction 
	\begin{align*}
		\Hom_{X\times S}(\mathscr{O}^n_{X\times S},\mathscr{E})\cong\Hom_S(\mathscr{O}^n_S,p_{S*}\mathscr{E}),
	\end{align*} 
	where $p_S:X\times S\to S$ is the projection, is an isomorphism.
	\begin{prop}\label{prop:matchup-with-adhikari}
	Let $X$ be a smooth quasi-projective scheme. Then 
	\begin{align*}
		\Quot^{n,\text{fr}}_{\mathscr{O}_X^n}(X)\cong\Quot ^{\text{Adh},n,\text{fr}}_{\mathscr{O}^n_X}(X).
	\end{align*} 
\end{prop}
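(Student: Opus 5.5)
The plan is to compare the two derived schemes by comparing both with a third object: the derived Quot functor of \cite{quot}, restricted to the framed locus. The first step is to recall the shape of Adhikari's construction. For smooth quasi-projective $X$, $\Quot^{\text{Adh}}_{\mathscr{O}^n_X}(X)$ is built from the moduli of pairs $(\mathscr{E}, q:\mathscr{O}^n_{X\times S}\to\mathscr{E})$. Here $\mathscr{E}$ is an $S$-flat family of coherent sheaves with proper support, and $q$ is a map that is surjective on $\mathrm H^0$. Equivalently, it is an open substack of the total space of the linear stack over $\Coh_{\text{prop}}(X)$ whose fiber over $\mathscr{E}$ is $\Hom_{X\times S}(\mathscr{O}^n_{X\times S},\mathscr{E})$.

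The second step is to identify this linear stack with the stack $\mathbb{V}$ from the proof of Theorem \ref{thm:coherent-reps}, restricted to $\Coh^n_0(X)$. By adjunction, $\Hom_{X\times S}(\mathscr{O}^n_{X\times S},\mathscr{E})\cong\Hom_S(\mathscr{O}^n_S,p_{S*}\mathscr{E})$, and this identification is natural in $S$ by the base change used for $\mathscr{V}=p_*\mathscr{U}$. The $S$-points of both stacks over $\Coh^n_0(X)$ therefore agree functorially. This gives an equivalence of derived stacks
\begin{align*}
\mathbb{V}|_{\Coh^n_0(X)}\cong \Spec_{\Coh^n_0(X)}\Sym((\mathscr{V}^n)^\vee),
\end{align*}
compatible with the projection to $\Coh^n_0(X)$.

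The third step is to match the open conditions. On our side, $\Quot^{n,\text{fr}}_{\mathscr{O}^n_X}(X)$ is the open locus of $\mathbb{V}|_{\Coh^n_0(X)}$ where $\mathscr{O}^n\to p^*\mathscr{V}$ is an isomorphism; this is Theorem \ref{thm:coherent-reps} combined with Proposition \ref{prop:quot-scheme-is-torsor}. On Adhikari's side, the framed locus is cut out inside the surjectivity locus by requiring the adjoint map to be an isomorphism. Surjectivity on $\mathrm H^0$ is an open condition, and on the locus where the adjoint is an isomorphism it holds automatically, since the image of $q$ then generates the global sections of a finite-length sheaf. Both opens are determined by their classical truncations, and those truncations are the same locus of the classical Quot scheme. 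Hence the two open substacks of the common ambient stack coincide.

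The main obstacle is the second step. Adhikari builds $\Quot^{\text{Adh}}$ with different machinery: a functor of points via derived Hom spaces and descent, with flatness and the length condition imposed on $\mathscr{E}$ directly. To identify it rigorously with the linear stack over $\Coh^n_0(X)$, I would unwind \cite[Definition 3.3]{quot}. I expect it is formulated as a fiber of $\Hom$ over the stack of coherent sheaves, which would make the comparison tautological. Failing that, I would construct a map of derived stacks from our side to Adhikari's using the universal quotient $\mathscr{O}^n\to\mathscr{R}$ on $Y\times X$. I would then check that it is an equivalence in two parts: an isomorphism on classical truncations, and an isomorphism on cotangent complexes. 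For the cotangent complexes, both are computed from $\Hom_{X\times S}(\ker q,\mathscr{E})$ together with the deformation theory of $\mathscr{E}$, and these agree because smoothness of $X$ makes $\mathscr{E}$ perfect. A map between laft derived schemes with these two properties is an equivalence.
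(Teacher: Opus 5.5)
Your main route is correct and uses the same ingredients as the paper, but packages them differently. The paper argues with universal objects. It builds $f\colon Y\to\Quot^{\text{Adh}}_{\mathscr{O}^n_X}(X)$ from $q_{\mathscr{R}}=\varepsilon_{\mathscr{R}}\circ p_Y^*\alpha_{\mathscr{R}}$ and proves $\mathrm H^0(q_{\mathscr{R}})$ surjective by a geometric-fiber computation ($\Omega^n\cong\Omega\otimes_K M\twoheadrightarrow\Omega\otimes_B M$) plus Nakayama. It then notes that $f$ lands in the framed open, builds $g$ from the adjoint of the universal quotient on $X\times U$, and asserts that $f$ and $g$ are inverse.

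You instead exhibit both sides as open subfunctors of the single linear stack $\mathbb{V}|_{\Coh^n_0(X)}$. This uses the adjunction $\Hom(\mathscr{O}^n_{X\times S},\mathscr{E})\simeq\Hom(\mathscr{O}^n_S,p_{S*}\mathscr{E})$, which is natural in $S$ by base change. Your packaging makes the ``mutually inverse'' step automatic, which the paper only asserts. It also isolates the one real check, that a framing forces $\mathrm H^0$-surjectivity, and that check is exactly the paper's fiber computation.

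Two small points. First, your justification of that check is compressed to the point of omitting the step that does the work. The counit $p_S^*p_{S*}\mathscr{E}\to\mathscr{E}$ is surjective on $\mathrm H^0$ because $\mathscr{E}$ is supported on a scheme finite over $S$, or fiberwise plus Nakayama. Together with $q$ hitting all of $p_{S*}\mathscr{E}$, which holds because its adjoint is an isomorphism, this gives surjectivity. Second, your main route should say explicitly that Adhikari's $\mathscr{E}$ is required to be perfect. Perfect agrees with ``almost perfect of relative tor amplitude $0$'' only because $X$ is smooth \cite[Corollary 2.19]{PS}; the paper also leaves this implicit when it feeds $\mathscr{R}$ into $f$.

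Your fallback via classical truncations and cotangent complexes is unnecessary once the functor of points of $\Quot^{\text{Adh}}$ is taken as the paper takes it. As sketched, the cotangent-complex comparison is also too vague to stand on its own.
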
 
\begin{proof}
	Let $\mathscr{E}\in \mathsf{Coh}^n_0(X)(S)$, for $S$ a test affine. By definition, an $S$-point of Adhikari's Quot scheme consists of a morphism
	\begin{align*}
		q: \mathscr{O}^n_{X\times S}\to \mathscr{E},
	\end{align*} 
	where $\mathrm H^0(q)$ is surjective and $\mathscr{E}$ is perfect, is flat over $S$, has proper support over $S$, and has geometric fibers of length $n$. We will construct a map
	\begin{align*}
		Y\coloneqq \Quot^{n,\text{fr}}_{\mathscr{O}_X^n}(X)\to \Quot ^{\text{Adh,}n,\text{fr}}_{\mathscr{O}^n_X}(X).
	\end{align*} 
	Recall the universal representation $\mathscr{R}$ of Section \ref{sec:universal-sheaves}, which is an almost perfect complex with proper support on $X\times Y$ (Lemma \ref{lem:R-has-proper-support}). It is equipped with a universal framing
	\begin{align*}
		\alpha_{\mathscr{R}}:\mathscr{O}^n_{Y}\xrightarrow{\cong}p_{Y*}\mathscr{R},
	\end{align*} 
	where $p_Y:X\times Y\to Y$. Consider the sheaf morphism $q_{\mathscr{R}}$ on $X\times Y$ given by the composition
	\begin{align*}
		q_{\mathscr{R}}:\mathscr{O}^n_{X\times Y}\xrightarrow{p_Y^*\alpha_{\mathscr{R}}}p_Y^*p_{Y*}\mathscr{R}\xrightarrow{\varepsilon_{\mathscr{R}}}\mathscr{R},
	\end{align*} 
	where $\varepsilon$ is the counit of the adjunction $(p_Y^*,p_{Y*})$. We next claim that $\mathrm H^0(q_{\mathscr{R}})$ is surjective. Denoting $C=\coker(\mathrm H^0(q_{\mathscr{R}}))$, it is sufficient to show that the restriction of $C$ to every geometric point is 0, by Nakayama and the fact that $C$ is of finite type. Thus, let $(x,y)$ be a geometric point of $X\times Y$, and set $K=k(y)$ and $\Omega=k(x,y)$. Let $\mathscr{R}_y$ denote the restriction of $\mathscr{R}$ to $X_y$. The scheme theoretic support of $\mathscr{R}_y$ is a finite scheme $\Spec B$ over $\Spec K$. Thus, we see that
	\begin{align*}
		\mathscr{R}_y\cong i_*\ywidetilde M,
	\end{align*} 
	where $i:\Spec B \to X_y$ and $M=\Gamma(Z,\ywidetilde M)\cong\Gamma(X_y,\mathscr{R}_y)$. Supposing that $(x,y)$ is in the support (otherwise the fiber is 0), we obtain a map $B\to \Omega$; hence,
	\begin{align*}
		\mathrm H^0(\mathscr{R}_{(x,y)})\cong\Omega\otimes _BM.
	\end{align*} 
	In particular, the restriction of $\mathrm H^0(q_{\mathscr{R}})$ to $(x,y)$ is a map
	\begin{align*}
		\Omega^n\xrightarrow{\Omega\otimes _K\alpha_y}\Omega\otimes _KM\to \Omega\otimes _BM,
	\end{align*} 
	where
	\begin{align*}
		\alpha_y:K^n\xrightarrow{\cong}\Gamma(X_y,\mathscr{R}_y)
	\end{align*} 
	The first arrow in the composition is an isomorphism, and the second arrow is surjective. Indeed, the base change over $B$ is obtained from the base change over $K$ by imposing additional relations. Thus, we see that by the representing property of $\Quot ^{\text{Adh}}_{\mathscr{O}^n_X}(X)$, we have defined a map
	\begin{align*}
		f:Y\to \Quot ^{\text{Adh}}_{\mathscr{O}^n_X}(X).
	\end{align*} 
Since the adjoint of the map $q_{\mathscr{R}}$ is an equivalence
\begin{align*}
	\mathscr{O}^n_{Y}\xrightarrow{\cong}p_{Y*}\mathscr{R},
\end{align*} 
we see that $f$ factors through the open subfunctor $U\coloneqq \Quot ^{\text{Adh,}n,\text{fr}}_{\mathscr{O}^n_X}(X)\subset \Quot ^{\text{Adh}}_{\mathscr{O}^n_X}(X)$. Going through the same procedure, one can define an inverse by using the universal quotient
\begin{align*}
	q_U: \mathscr{O}^n_{X\times U}\to \mathscr{E},
\end{align*} 
where $\mathscr{E}$ is the universal family on $X\times U$, defined in the same procedure as was used to define $\mathscr{R}$. The adjoint to $q_U$ is
\begin{align*}
	 \mathscr{O}^n_U\xrightarrow{\eta}p_{U*}p_{U}^*\mathscr{O}^n_U\xrightarrow{p_{U*}q_U}p_{U*}\mathscr{E};
\end{align*} 
by definition of $q_U$, this is an equivalence. Hence, we have a well-defined map
\begin{align*}
	g:U\to Y.
\end{align*} 
It is easy to see that $ f$ and $g$ are mutually inverse.
\end{proof} 
\subsection{Goldman Symplectic Structure}
Throughout this section, $k=\mathbb{C} $. Let $\Sigma$ be a genus 1 oriented, closed surface, so that $A=k[\pi_1(\Sigma)]\cong \mathscr{O}(\mathbb{G} _m^2)$. Let $X=\Spec A$, and equip $X$ with the Calabi-Yau structure determined by
\begin{align*}
	\theta=\frac{dx}{x}\wedge \frac{dy}{y},
\end{align*} 
where $A=k[x^{\pm 1},y^{\pm 1}]$ and $x,y\in\pi_1(\Sigma)$ with intersection number $x\cdot y=1$. We will show that the symplectic structure defined by Theorem \ref{thm:symplectic-structure} on $\mathsf{Coh}^n_0(X)$ induces the Goldman Poisson structure on the classical character variety.
\begin{prop}
	Let $X=\Spec A$ equipped with the Calabi-Yau structure $\theta$. Restriction of the symplectic form $\omega$ defined by Theorem \ref{thm:symplectic-structure} to $\mathsf{Coh}^n_0(X)^{\text{cl}}$ induces a Poisson bracket on the character variety $\Hom(\pi_1(\Sigma),\GL_n)\git \GL_n$, which is Goldman's Poisson bracket for the pairing
	\begin{align*}
		B:\mathfrak{gl}_n\times \mathfrak{gl}_n\to &\mathbb{C} \\
		(U,V)\mapsto &\tr(UV).
	\end{align*} 
\end{prop}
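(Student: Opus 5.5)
The plan is to compute the 0-shifted form $\omega$ of Theorem \ref{thm:symplectic-structure} explicitly at a classical point, and then compare it with the known description of Goldman's form on the smooth locus of the character variety. Here $d=2$, so $\omega$ is $0$-shifted. At a $k$-point $\mathscr{E}\in \Coh^n_0(X)(k)$, formal \'etaleness of $f$ identifies the tangent complex with $T_{\mathscr{E}}\Coh^n_0(X)\cong \Hom_X(\mathscr{E},\mathscr{E})[1]$. The form of \cite{BD} (equivalently \cite{PTVV} via \cite[Theorem D]{BD-is-PTVV}) is the Serre-duality pairing
\begin{align*}
	\Hom_X(\mathscr{E},\mathscr{E})[1]\otimes \Hom_X(\mathscr{E},\mathscr{E})[1]\to \Hom_X(\mathscr{E},\mathscr{E})[2]\xrightarrow{\tr}\mathrm{H}^2_c(X,\mathscr{O}_X)\xrightarrow{\theta}k,
\end{align*}
where the last map uses the trivialization $\theta$ of $K_X$ and integration over compactly supported cohomology. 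On $\mathrm{Ext}^1$ this gives $(a,b)\mapsto \int \theta\cdot\tr(a\cup b)$.

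Next I translate this into group cohomology. Via Proposition \ref{prop:quot-scheme-is-reps} and Proposition \ref{prop:quot-scheme-is-torsor}, a $k$-point is a representation $\rho:\pi_1(\Sigma)\to\GL_n$, and $\mathscr{E}$ is the $A$-module $k^n$ with $x,y$ acting through $\rho$. Because $\Sigma=T^2$ is aspherical and $A=k[\mathbb{Z}^2]$ has the Koszul resolution attached to the commuting generators $x,y$, I identify
\begin{align*}
	\mathrm{Ext}^i_A(\mathscr{E},\mathscr{E})\cong \mathrm{H}^i(\pi_1(\Sigma);\mathfrak{gl}_n{}_{\mathrm{Ad}\rho})\cong \mathrm{H}^i(\Sigma;\mathfrak{gl}_n{}_{\mathrm{Ad}\rho}).
\end{align*}
The Koszul complex for $(x-1,y-1)$ is exactly the cellular cochain complex of the torus with local coefficients. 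The key verification is that the Serre-duality trace $\mathrm{Ext}^2_A(\mathscr{E},\mathscr{E})\to \mathrm{H}^2_c(X,K_X)\to k$, normalized by $\theta=\frac{dx}{x}\wedge\frac{dy}{y}$, agrees with $\tr$ followed by the fundamental class $\mathrm{H}^2(\Sigma;\mathbb{C})\to\mathbb{C}$, up to a universal nonzero constant. To check this, I restrict to $n=1$ and a skyscraper $\mathscr{O}_p$, where everything can be computed by the local Koszul complex at $p$. The residue of $\frac{dx}{x}\wedge\frac{dy}{y}$ against the Koszul class of $(x-x_0,y-y_0)$ is $1/(x_0y_0)$. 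This factor is absorbed by the change of generators $x-x_0=x_0(x/x_0-1)$, which matches the group-cohomology generator. The orientation $x\cdot y=1$ fixes the sign. Once this holds, the cup-product pairing becomes $(a,b)\mapsto\langle \tr(a\cup b),[\Sigma]\rangle=B(a\cup b)$, which is Goldman's symplectic form on $\mathrm{H}^1(\Sigma;\mathfrak{g}_{\mathrm{Ad}\rho})$ from \cite{gold}.

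To pass from the stack to the character variety, I restrict $\omega$ to the classical locus $\Coh^n_0(X)^{\text{cl}}=\Hom(\pi_1\Sigma,\GL_n)/\GL_n$. On the open locus of irreducible (simple) representations, the stack is a $B\mathbb{G}_m$-gerbe over a smooth open subvariety of $\Hom\git\GL_n$. There $\mathrm{Ext}^0$ and $\mathrm{Ext}^2$ are one-dimensional and split off, and $\omega$ descends to a nondegenerate $2$-form on the coarse space, namely the pairing above on $\mathrm{Ext}^1$. Inverting this form gives the Poisson bivector, and Goldman's product formula for the bracket of trace functions $\{\tr_\alpha,\tr_\beta\}$ follows from his theorem that the bracket is dual to the cup-product form. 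The bracket then extends to all of $\mathscr{O}(\Hom\git\GL_n)$, because this ring is normal and generated by trace functions, and a Poisson bracket on a dense open subset that preserves regular functions determines the global one.

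The main obstacle is the normalization comparison in the second step: matching the Serre-duality trace map with the topological fundamental class through the Koszul/cellular identification, including the sign coming from the orientation. I would settle it by the explicit rank-one skyscraper computation and then argue that both pairings are natural in $\rho$ and compatible with direct sums, so the constant is universal.
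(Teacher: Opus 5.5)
Your pointwise computation of the underlying $2$-form is a legitimate alternative to the paper's route. It uses the tangent complex $\Hom_X(\mathscr{E},\mathscr{E})[1]$, the Serre-duality trace pairing, the Koszul identification with $\mathrm{H}^*(\mathbb{Z}^2;\mathfrak{gl}_n)$ with adjoint coefficients, and a residue normalization on a skyscraper. The paper instead matches Calabi--Yau classes in $\mathrm{HH}_2(A)$: the bar complex and HKR send $[x,y]-[y,x]$ to $\theta$, and the negative cyclic lift is unique. It then quotes the Brav--Dyckerhoff/PTVV comparison along $\Coh^n_0(X)\cong\Map(\Sigma_B,B\GL_n)$ and PTVV's identification with Goldman.

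\textbf{The gap: your passage to the character variety.} Since $\pi_1(\Sigma)\cong\mathbb{Z}^2$ is abelian, commuting matrices over $\mathbb{C}$ share an eigenvector, so every simple representation is one-dimensional. For $n\ge 2$ your ``open locus of irreducible representations'' is therefore empty. The $B\mathbb{G}_m$-gerbe picture, the splitting off of one-dimensional $\mathrm{Ext}^0$ and $\mathrm{Ext}^2$, and the density/extension argument consequently prove nothing. For the same reason you cannot simply quote Goldman's theorem that the bracket is dual to the cup-product form on $\mathrm{H}^1(\Sigma;\mathfrak{g}_{\mathrm{Ad}\rho})$. That duality is established where $\dim Z(\rho)=\dim Z(\GL_n)$, i.e.\ where the centralizer is the center up to finite index. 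For abelian $\pi$ and $n\ge 2$ every $\rho$ has $\dim Z(\rho)\ge 2$, so that locus is empty too. What actually happens here is that $(\Hom(\mathbb{Z}^2,\GL_n)\git\GL_n)_{\mathrm{red}}\cong\Sym^n(\mathbb{G}_m^2)$, generic stabilizers are $\mathbb{G}_m^n$, and generic $\mathrm{Ext}^0$ and $\mathrm{Ext}^2$ are $n$-dimensional.

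\textbf{A repair.} Work instead on the open dense locus of regular semisimple representations $\chi_1\oplus\cdots\oplus\chi_n$ with pairwise distinct characters. Equivalently, $\mathscr{E}=\bigoplus_i\mathscr{O}_{p_i}$ with distinct $p_i=(x_i,y_i)\in\mathbb{G}_m^2$.
\begin{itemize}
\item All Ext groups between distinct $\mathscr{O}_{p_i}$ vanish, and $\mathrm{Ext}^1=\bigoplus_i\mathrm{Ext}^1(\mathscr{O}_{p_i},\mathscr{O}_{p_i})$ is the tangent space of the smooth coarse space $\Sym^n(\mathbb{G}_m^2)\setminus\Delta$.
\item Your rank-one residue computation is exactly what shows the induced form there is $\sum_i\frac{dx_i}{x_i}\wedge\frac{dy_i}{y_i}$. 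Your ``natural in $\rho$ and compatible with direct sums'' argument would not fix the constant at non-semisimple points, where you would need additivity in extensions; on this locus you do not need it.
\item You must then check directly, rather than by citing Goldman's duality, that the inverse bracket satisfies $\{\tr\rho(x^ay^b),\tr\rho(x^cy^d)\}=\pm(ad-bc)\,\tr\rho(x^{a+c}y^{b+d})$. This is Goldman's product formula on the torus for $B=\tr$.
\item Finally, extend across the big diagonal, which has codimension $2$. Use normality of $\Sym^n(\mathbb{G}_m^2)$ and the fact that the functions $\sum_i x_i^a y_i^b$ generate its coordinate ring.
\end{itemize}
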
 
\begin{proof}
	By \cite[Proposition 5.3]{BD1}, the Hochschild class which is defined by the fundamental class of $\Sigma$, itself defined by the chosen orientation, is given by $u_*[\Sigma]\in \mathrm H_2(\mathscr{L}\Sigma, \mathbb{C} )$, where $u:\Sigma \to \mathscr{L}\Sigma$ is the constant loop inclusion of $\Sigma$ into its free loop space $\mathscr{L}\Sigma\coloneqq \mathsf{Map}(S^1,\Sigma)$. As is well known, the homology of the free loop space is the Hochschild homology of $X$; hence, $u_*[\Sigma]$ is represented by a class in $\mathrm H_2(\mathscr{L}\Sigma,\mathbb{C} )\cong \mathrm H\mathrm H_2(A)\cong \mathrm H\mathrm H_2(\mathsf{QCoh}(X))$. Now, for any group $G$, there is a map from the bar complex of $G$ to Hochschild chains of $\mathbb{C} [G]$ given by
	\begin{align*}
		[g_1,\ldots,g_m]\mapsto (g_1\cdots g_m)^{-1}\otimes g_1\otimes \cdots \otimes g_m,
	\end{align*} 
	and this is the map which models $u_*$. Thus, taking $G=\pi_1(\Sigma)$ and the class $[x,y]-[y,x]$, it is sent to
	\begin{align*}
		(xy)^{-1}\otimes x\otimes y-(xy)^{-1}\otimes y\otimes x.
	\end{align*} 
	Under normalized Hochschild-Kostant-Rosenberg (HKR), which sends
	\begin{align*}
		g_0\otimes g_1\otimes g_2\mapsto \frac{1}{2}g_0dg_1\wedge dg_2,
	\end{align*} 
	this class gets sent to $d\ln x\wedge d\ln y=\theta$. Moreover, by \cite[Lemma 5.10]{BD1}, it has a unique negative cyclic lift as $\mathrm H\mathrm H_i(X)=0$ for $i>2$ (by e.g. HKR). Thus, the Calabi-Yau structure determined by the orientation of $\Sigma$ agrees with the structure determined by $\theta$. Furthermore, by \cite[Theorem 5.5]{BD}, this Calabi-Yau class induces the symplectic form $\omega$. Now, we need only apply \cite[Lemma 3.7]{BD-is-PTVV} restricted to local systems of rank $n$ vector bundles in degree 0 to obtain that $\omega$ is the pullback along the equivalence (\cite[Corollary A.1]{BRY})
	\begin{align*}
		\mathsf{Coh}^n_0(X)\xrightarrow{\cong}\Map(\Sigma_B,B\GL_n)
	\end{align*} 
	of the symplectic form of \cite[Theorem 2.5]{PTVV}, where $\Sigma_B$ is the constant (``Betti'') stack with value $\Sigma$. Finally, this latter symplectic form is known to recover the Goldman bracket \cite[Section 3.1]{PTVV}.
\end{proof}

\newpage
\printbibliography[heading=bibintoc]
\end{document}